\documentclass[12pt]{article}
\usepackage{amssymb}
\usepackage{makecell}
\usepackage{graphicx}
\usepackage{subcaption}
\usepackage{caption} 
\usepackage{array}
\usepackage{multirow}
\usepackage{amsmath}
\usepackage{multirow}
\usepackage{mathrsfs}
\usepackage{amsfonts}
\usepackage{indentfirst}
\usepackage{colortbl,dcolumn}
\usepackage{color}

\usepackage{amsmath}
\allowdisplaybreaks[4]
\usepackage{psfrag}

\usepackage{array}
\usepackage{booktabs}
\usepackage{cite}
\usepackage[hidelinks]{hyperref}
\usepackage{amsthm}
\allowdisplaybreaks
\numberwithin{equation}{section}
\usepackage[top=1in, bottom=1in, left=0.8in, right=0.8in]{geometry}

\usepackage{caption}
\usepackage{appendix} 
\usepackage{graphicx}
\usepackage{subcaption}
\usepackage{float}
\usepackage{makecell}
\newcommand{\R}{\mathbb{R}}
\newcommand{\N}{\mathbb{N}}
\newcommand{\E}{\mathbb{E}}

\newcommand{\dd}{\mathrm{d}}
\newtheorem{thm}{Theorem}[section]

\newtheorem{lemma}[thm]{Lemma}
\newtheorem{proposition}[thm]{Proposition}

\newtheorem{assumption}[thm]{Assumption}

\begin{document}
\bibliographystyle{plain}
\title{Accelerated Schr\"odinger-F\"ollmer samplers
\footnotemark[2] {}}
	\author{
		Haotian Lin, 
        Xiaojie Wang, 
        Xiaoyan Zhang 
		\\
		\footnotesize School of Mathematics and Statistics,   HNP-LAMA, Central South University, Changsha 410083, Hunan, China
		%
		%
		%
	}

\maketitle
\footnotetext[2]{
This work was supported by Natural Science Foundation of China (12471394, 12371417) and Hunan Basic Science Research Center for Mathematical Analysis (2024JC2002). 
We also gratefully acknowledge the partial computational support provided by the High Performance Computing Center of Central South University.
E-mail addresses: 
h.t.lin@csu.edu.cn,
x.j.wang7@csu.edu.cn,
x.y.zhang@csu.edu.cn.
 }

\begin{abstract}
Sampling is a fundamental algorithmic task in wide-ranging applications across multiple disciplines such as scientific computing, statistics and machine learning.
In this paper, an efficient stochastic Runge-Kutta scheme is proposed to accelerate the Schr\"odinger-F\"ollmer sampler, 
designed for sampling from complex and high-dimensional multimodal distributions.
The resulting stochastic Runge-Kutta Schr\"odinger-F\"ollmer sampler (SRKSFS) is proved to achieve a convergence rate of order $\mathcal{O} ( h^{3/2} |\ln h|)$ in the $L^2$-Wasserstein distance,
considerably improving the order $\mathcal{O}(h)$ of the existing Euler type sampler.
Obtaining the enhanced convergence rate is, however, not trivial, by noting that the drift of the diffusion process is not differentiable but only $\frac{1}{2}$-H\"older continuity with respect to the time variable. To address the difficulty, we rely on delicate error estimates to overcome the singularity due to time derivatives of the drift, at the expense of the logarithmic factor.
Furthermore, the framework is extended to data-driven Schr\"odinger-F\"ollmer generation with empirical measures, enabling data-driven sampling without known density.
A variety of numerical experiments are reported to validate the effectiveness of the proposed sampling algorithms. 
\\
\noindent{\textbf{AMS subject classification:}} 65C05, 60H35, 62D05.\\

\noindent{\textbf{Key Words:} Schr\"odinger-F\"ollmer diffusion, stochastic Runge-Kutta method, Monte Carlo,  error bound in Wasserstein distance, data-driven sample generation, multimodal distribution.}
\end{abstract}

\section{Introduction}
\noindent
Sampling from complex and high-dimensional probability distributions is a fundamental task in computational statistics and machine learning,
with critical applications in
Bayesian inference \cite{gelman1995bayesian}, 
generative modeling \cite{yang2023diffusion,bond2021deep}, 
uncertainty quantification \cite{sullivan2015introduction}, 
computer vision \cite{huang2022multimodal}
and drug discovery\cite{zeng2022deep}.
Broadly, the sampling problem can be categorized by the information available about the target distribution $\mu$: (i) the setting where its density $\mu(\text{d}x) \propto e^{-V(x)}\text{d}x$ is explicitly known, albeit potentially complex, and (ii) the data-driven setting where the density is unknown but independent samples from $\mu$ are accessible.

In the case when the density is known, a widely used sampling algorithm is the (overdamped) Langevin Monte Carlo (LMC)  (also called unadjusted Langevin algorithm, ULA)
\cite{altschuler2024shifted,dalalyan2017theoretical, durmus2017nonasymptotic, li2025sharp, li2021sqrt, mou2022improved, yang2025non, yang2026accelerating,li2019stochastic}, 
based on Euler or Runge-Kutta discretizations of Langevin stochastic differential equations (SDEs):
\begin{equation}
\mathrm{d} X_{t} = -\nabla V (X_{t}) \mathrm{d} t + \sqrt{2} \mathrm{d} W_{t},
\quad X_{0} = x_{0}, \quad t > 0,
\end{equation}
where the drift term $-\nabla V (\cdot)$ is a log-gradient of the target density $\mu(\text{d}x) \propto e^{-V(x)}\text{d}x$,  
and $\{W_t\}_{t\geq 0}$ is a standard $d$-dimensional Brownian motion.
Under suitable conditions on
$-\nabla V$, the overdamped Langevin SDE is ergodic 
and admits the target distribution $\mu$
as its unique invariant distribution. 
Consequently, sampling from $\mu$ can be done by simulating this SDE over a long time horizon. 

In the context of strong convexity, non-asymptotic error bounds of LMC have been well-established in various distances
\cite{dalalyan2017theoretical, durmus2017nonasymptotic, durmus2019high}.
Since the strong convexity condition is often too restrictive in practice, the non-asymptotic error analysis of LMC has been recently investigated in non-convex settings such as contractivity at infinity
\cite{cheng2018sharp,durmus2017nonasymptotic,eberle2016reflection, pang2025projected, schuh2024convergence}
and the log-Sobolev inequality
\cite{li2025sharp, mou2022improved,yang2025non}.
Nevertheless, 
LMC relies on ergodicity and often has slow mixing in practice, 
especially for high-dimensional or multimodal distributions, where samples may become trapped in local modes, 
failing to capture the full probabilistic mass of the target 
\cite{qiu2024efficient, hale2010asymptotic}.

An alternative paradigm, which circumvents the need for long-time integration and aims to address these limitations, seeks to construct a diffusion process that transports a simple initial distribution (e.g.,  a point mass or Gaussian) to the target $\mu$ within a finite time interval $[0,1]$. This framework is central to diffusion-based generative models and Schr{\"o}dinger bridge problems.
Significant efforts have been made along this direction, see, e.g.,
\cite{albergo2023building,albergo2025stochastic,dai2023global,huang2025schrodinger,ruzayqat2023unbiased,wang2025multimodal,huang2024one}.
Given the target distribution $\mu \in \mathcal{P} (\mathbb{R} ^d)$ 
and a simple probability distribution $\nu \in \mathcal{P} (\mathbb{R} ^d)$, 
the general problem can be formulated as constructing and solving an SDE of the form
\begin{equation}
    \mathrm{d} X_{t}=b\left(t, X_{t}\right) \mathrm{d} t+\sigma\left(t, X_{t}\right) \mathrm{d} W_{t}, \quad X_{0} \sim \nu, \quad X_{1} \sim \mu, \quad t \in(0,1].
\end{equation}
%
However, finding the closed-form of the drift and diffusion coefficients is usually not an easy task
(cf.\cite{albergo2023building,albergo2025stochastic}).
In \cite{follmer2005entropy,follmer2006random}, 
F{\"o}llmer proposed a Schr{\"o}dinger-F{\"o}llmer diffusion process in the context of the Schr{\"o}dinger bridge problem \cite{schrodinger1932theorie}, 
connecting two distributions. 
More precisely,
for a target distribution $\mu (\mathrm{d} x) \propto  e^{-V(x)}\mathrm{d} x$, 
the Schr{\"o}dinger-F{\"o}llmer diffusion evolving on the unit
interval $[0, 1]$ is given by:
\begin{equation}
\label{eq:sfp}
    \dd X_t = \nabla \log 
    \big( Q_{1-t} g(X_t) \big)
    \dd t 
    + 
    \dd W_t, 
\quad X_0 = 0 ,  
\quad t \in [0,1], 
\end{equation}
where $g$ denotes the Radon-Nikodym derivative of $\mu$ 
with respect to the Gaussian measure $\mathcal{N} (0, \mathbf{I}_d)$
\begin{equation}
g(x) 
:= 
\tfrac{\mathrm{d}\mu}{\mathrm{d}\mathcal{N}(0, 
 \mathbf{I}_{d})}(x) 
 =
 \tfrac{(2\pi)^{d/2}}{C} \exp
 \big( -V(x)
 + \tfrac{\|x\|^{2}}{2} \big), 
\quad x \in \mathbb{R}^{d}, 
\end{equation}
and $\{ Q_{t} \}_{t \in [0,1]}$ denotes the heat semigroup defined by
\begin{equation}
Q_{t} g(x)
:= 
\mathbb{E}_{\xi} 
\big[ g(x + \sqrt{t}  \xi)
\big],
\quad t \in [0,1], 
\quad \xi \sim \gamma^{d}. 
\end{equation}
The diffusion process \eqref{eq:sfp} transports the degenerate distribution $\delta_0$ at $t=0$ to the target distribution 
$
\mu(\mathrm{\,d} x) 
    \propto 
     e^
     {-V(x)} 
    \mathrm{\,d} x
$
at $t=1$. 
Recently, the authors of \cite{huang2025schrodinger} proposed a Schr\"odinger-F\"ollmer sampler (SFS) based on the Euler discretization of SDE \eqref{eq:sfp} with a uniform time step size $h>0$. Moreover, they proved a convergence rate of order $\mathcal{O}(\sqrt{dh})$ in the $L^2$-Wasserstein distance. 
%
More recently, a variant of Schr\"odinger-F\"ollmer diffusion with temperatures is introduced in \cite{wang2025multimodal}:
\begin{equation}
\label{eq:sfp-temp}
\dd 
X_{t} 
=
\nabla \log
\big(
Q^\beta_{1-t} g_\beta(X_t) 
\big)
\dd t
+ 
\sqrt{\beta}  
\,
\dd W_{t}, 
\quad X_{0} = 0, 
\quad t \in (0,1],
\end{equation}
where $g_\beta$ and $Q^\beta_t, \, \beta \in (0, \infty)$ are determined by \eqref{eq-sfs-g-beta} and \eqref{eq-sfs-Qt-beta}, respectively.
Building on the Euler discretization of \eqref{eq:sfp-temp}, new sampling algorithms are constructed, achieving an enhanced convergence rate of order $\mathcal{O}(dh)$. 
%
An interesting question thus arises:
\\

{\it Can an efficient sampling algorithm with higher-order convergence be constructed, based on a higher-order time discretization scheme for the Schr\"odinger-F\"ollmer diffusion?}
\\

This is, however, not trivial, by noting that the drift of the diffusion process \eqref{eq:sfp-temp} is not differentiable but only $\frac{1}{2}$-H\"older continuity with respect to the time variable (see \eqref{eq:b-lipschitz-in-x-houder-in-t}). 
In this work, we attempt to answer the question in the affirmative. 
Inspired by the idea of stochastic Runge-Kutta methods for high-order strong approximation of SDEs \cite{milstein2004stochastic,kloeden1992numerical}, we design a stochastic Runge-Kutta Schr\"odinger-F\"ollmer sampler (SRKSFS) as follows:
\begin{align}
\begin{split}
        Y_{{n+1}} 
        & = Y_{n} 
        + \tfrac{1}{3}
        f_\beta(t_n,Y_{n})h 
        + \tfrac{2}{3}
        f_\beta(t_n+\tfrac{3}{4}h,H_{{n}})h 
        + \sqrt{\beta}
        \Delta W_n,
\\
H_{{n}} 
        & = Y_{n} 
        + \tfrac{3}{4}f_\beta(t_n,Y_{n})h 
        + \tfrac{3\sqrt{\beta}
        \Delta Z_n}{2h}, 
\end{split}
\end{align}
where $f_{\beta}(t, x) := \beta  
\,
\nabla \log \big( Q_{1-t}^{\beta} g_{\beta}(x) \big)$ is the drift of \eqref{eq:sfp-temp}. Here $\Delta W_n, \,\Delta Z_n $ defined by \eqref{eq-srk-Wn-Zn} are both Gaussian
and can be generated by two sequences of independent standard Gaussian random variables (cf. \eqref{eq:deltaW-deltaZ-implement}).
For SRKSFS, we establish a non-asymptotic convergence rate of order $\mathcal{O}((dh)^{3/2}|\ln h|)$ in the $L^2$-Wasserstein distance.
In the classical convergence analysis, higher convergence rates are obtained at the price of sufficient smoothness of coefficients of SDE \cite{kloeden1992numerical,milstein2004stochastic}.
However, due to the limited temporal H\"older  regularity of the drift of \eqref{eq:sfp-temp}, obtaining the high convergence rate turns out to be highly non-trivial.
This difficulty is overcome by a more refined analysis in handling the singularity in time derivatives of the drift (see the proof of Theorem \ref{thm-exact-drift}).

The above mentioned convergence rates were obtained when the exact drift coefficient of \eqref{eq:sfp-temp} was used. 
However, in most cases when the target distribution is complex, the exact computation of the drift in \eqref{eq:sfp-temp} involving an expectation is intractable. Then we have to approximate the exact drift $f_{\beta}$ in the above sampler with a Monte Carlo approximation $\widetilde{f}_{\beta}^{M}$ given by \eqref{eq-sfs-mc-exact}, resulting in the other sampler \eqref{eq-srk-mc-appr}. 
In this context, the sampling error arises from both the time discretization and the Monte Carlo approximation of the drift.
The latter approximation error is characterized by $\mathcal{O}\Big(\sqrt{\tfrac{d}{M}}\Big)$, where $M$ is the number of samples used in the Monte Carlo approximation (cf. Theorem \ref{thm-mc-error-bound}). 
%

%
%
In addition, we also extend the accelerated Schr\"odinger-F\"ollmer sampler to handle the task of sampling in the case when the target distribution $\mu$ is unknown. 
Based on available independent samples  $\{\eta^{(i)}\}^{n}_{i=1} \sim \mu$, we propose a data-driven sampler \eqref{eq:sfs-data} to generate new samples from the unknown distribution $\mu$. Image generation tasks using MNIST and CIFAR-10 datasets demonstrate its superior performance in data-driven generation (see Section \ref{sec:Data-driven-generation} for more details).

In summary, 
our contributions are as follows:
\begin{itemize}
\item 
An accelerated Schr\"odinger-F\"ollmer sampler \eqref{eq-srk-appr} 
is proposed, with the following error bound in $L^2$-Wasserstein distance established:
\[
\mathcal{W}_2 
        \big(
        \text{Law}\,(Y_{1}), \mu
        \big) 
        \leq
        C
        (dh)^{3/2}
        |\ln h|.
\]

\item 
When the exact computation of the drift is intractable, we introduce the sampler \eqref{eq-srk-mc-appr} with an inexact drift due to Monte Carlo approximation and obtain the following
error bound:
\[
\mathcal{W}_2
\big(
\text{Law}(\widetilde{Y}_1), \mu
\big) 
\leq 
C (d h)^{3/2} |\ln h| 
+
C \sqrt{\tfrac{d}{M}},
\]
where $M$ is the number of samples used in the Monte Carlo estimator of the drift term.

\item 
When the target distribution $\mu$ is unknown, but only accessible through empirical samples  $\{\eta^{(i)}\}^{n}_{i=1} \sim \mu$, 
we introduce a data-driven sampler \eqref{eq:sfs-data} to generate new samples from the unknown distribution $\mu$.

\item 
A variety of numerical experiments are reported to validate the effectiveness of SRKSFS for sampling from 
known densities and demonstrate its superior performance in data-driven generation from empirical measures when the densities are unknown.
\end{itemize}

The structure of this paper is as follows. 
Section \ref{sec:sfp} presents the Schr\"odinger-F\"ollmer diffusion with temperatures.  
Accelerated Schr\"odinger-F\"ollmer samplers are introduced in Section \ref{sec:sfs} with error bounds obtained. 
In Section \ref{sec:experiments}, numerical experiments are reported on sampling from Gaussian mixtures, 
copula-generated distributions, and deep generative models. 
Section \ref{sec:Data-driven-generation} extends the framework to data-driven Schr\"odinger-F\"ollmer generation using empirical measures and reports some numerical experiments. 
Finally, Section \ref{sec:Conclusion} concludes this work and discusses future directions.

\section{Schr\"odinger-F\"ollmer diffusion with temperatures}
\label{sec:sfp}
\subsection{Notation}
Throughout this paper, 
we use $\mathbb{N}$ for the set of all positive integers 
and let $d \in \mathbb{N}$. 
For short, 
we denote $[d] := \{1,\cdots,d\}$ and   
$[d]_0:= \{0, 1,\cdots,d\}$.
Let $\|\cdot\|$  and $\langle\cdot,\cdot\rangle$ denote the Euclidean norm 
and the inner product of vectors in $\mathbb{R}^{d}$, respectively. 
We use $\mathbf{1}_d \in \mathbb{R}^{d}$ 
and $\mathbf{I}_d \in \mathbb{R}^{d \times d}$ 
to denote the all-ones vector (where all entries are $1$) and the identity matrix, respectively.
For any matrix $A =(a_{i,j})\in \mathbb{R}^{d\times d}$, the $\mathrm{F}$-norm is defined as
$\|A\|_{\mathrm{F}}
:=
\sqrt{\sum_{i,j=1}^d |a_{i,j}|^2}$, 
and the operator norm as $\|A\| =\|A\|_{\mathrm{op}} :=\sup_{\|v\|=1}\|Av\|$. 
It is not difficult to see
$
\|A\|
\leq
\|A\|_{\mathrm{F}}
\leq 
\sqrt{d}
\,
\|A\|.
$

Let $\left\{W_t\right\}_{t \in [0, 1] }$ be a standard $d$-dimensional Brownian motion process, defined on a filtered probability space $
\big(
\Omega_W, 
\mathcal{F}^W,
\mathbb{P}_W, 
\{\mathcal{F}^W_t\}_{t \in[0,1]}
\big)
$ 
satisfying the usual conditions.
Let $\{\xi_{i}\}_{ i \in \mathbb{N}}$ be an i.i.d. family of standard Gaussian distributed random variables, independent of $\left\{W_t\right\}_{t \in [0, 1]}$,  defined on an additional filtered probability space 
$
\left(
\Omega_\xi, 
\mathcal{F}^\xi,
\mathbb{P}_\xi\right)$.
%
By $\mathbb{E}_W$ and $\mathbb{E}_\xi$, we denote expectations in these two probability spaces 
$
\big(
\Omega_W, 
\mathcal{F}^W,
\mathbb{P}_W
\big)
$
and
$
\left(
\Omega_\xi, 
\mathcal{F}^\xi,
\mathbb{P}_\xi\right),
$
respectively.
By introducing the following product probability space
\begin{align*}
    (\Omega,\mathcal{F},\mathbb{P},\mathcal{F}_t)
    :=
    (\Omega_W\otimes\Omega_{\xi}, \mathcal{F}^W
    \otimes
    \mathcal{F}^{\xi},
    \mathbb{P}_W
    \otimes\mathbb{P}_{\xi},
    \mathcal{F}^W_t 
    \otimes
    \mathcal{F}^{\xi}),
\end{align*}
%
%
we use $\mathbb{E} $ for the expectation in the product probability space and $L^r\left(\Omega, \mathbb{R}^d\right), r \geq 1$, to denote the family of $\mathbb{R}^d$-valued random variables $\eta$ satisfying $\mathbb{E} \left[\|\eta\|^r\right] := \mathbb{E}_W \big( \mathbb{E}_\xi \left[\|\eta\|^r\right] \big) <\infty$. 
Let $\mathcal{B}\left(\mathbb{R}^d\right)$ be the Borel $\sigma$-field of $\mathbb{R}^d$ and $\mathcal{P}\left(\mathbb{R}^d\right)$ be the space of all probability distributions on $\left(\mathbb{R}^d, \mathcal{B}(\mathbb{R}^d)\right)$.
By $\mathcal{L}aw(X)$ we denote the probability distribution of the $\mathbb{R}^d$-valued random variable $X$.
Let $\nu_1$ and $\nu_2$ be two probability measures defined on $\left(\mathbb{R}^d, \mathcal{B}(\mathbb{R}^d)\right)$, 
and let $\mathcal{D}\left(\nu_1, \nu_2\right)$ represent the collection of couplings $\nu$ on $\left(\mathbb{R}^{2 d}, \mathcal{B}(\mathbb{R}^{2 d})\right)$ whose first and second marginal distributions are $\nu_1$ and $\nu_2$, respectively.
The $L^2$-Wasserstein distance between $\nu_1$ and $\nu_2$ is defined by
\[
\mathcal{W}_2
(\nu_1, \nu_2)
:=
\inf_{\nu \in \mathcal{D}
(\nu_1, \nu_2)}
\left(
\int_{\mathbb{R}^d} \int_{\mathbb{R}^d}
\left\|
\theta_1-\theta_2
\right\|^2 
\mathrm{\, d} 
\,
\nu
\left(
\theta_1, \theta_2
\right)
\right)^{1/2}.
\]
Let $\mathcal{C}\left(\mathbb{R}^d, \mathbb{R}\right)$ denote the set of all continuous functions from $\mathbb{R}^d$ to $\mathbb{R}$ and let $\mathcal{C}_b\left(\mathbb{R}^d, \mathbb{R}\right)$ denote the set of all bounded continuous functions from $\mathbb{R}^d$ to $\mathbb{R}$. 
For $k \geq 0$, denote by $\mathcal{C}^k\left(\mathbb{R}^d, \mathbb{R}\right)$ the set of functions from $\mathbb{R}^d$ to $\mathbb{R}$ which have continuous $0$-th, $\ldots, k$-th order derivatives, and denote by $\mathcal{C}_b^k\left(\mathbb{R}^d, \mathbb{R}\right)$ the set of functions from $\mathbb{R}^d$ to $\mathbb{R}$ which have bounded continuous $0$-th, $\ldots, k$-th order derivatives.
For $y \in \mathcal{C}^3(\mathbb{R}^d, \mathbb{R})$ and $v_1, v_2, v_3, x \in \mathbb{R}^d$, we denote
\begin{equation}
\begin{aligned}
\label{SFS-W2-eq:def-dir-der}
        \nabla_{v_1} y(x) 
        & =
        \lim_{\varepsilon \rightarrow 0} \frac{y(x+\varepsilon v_1)
        -y(x)}
        {\varepsilon}, 
        \\
        \nabla_{v_2} 
        \nabla_{v_1} y(x) 
        & =
        \lim_{\varepsilon \rightarrow 0} \frac{\nabla_{v_1} y\left(x+\varepsilon v_2\right)
        -
        \nabla_{v_1} y(x)}{\varepsilon},\\
        \nabla_{v_3} 
        \nabla_{v_2} 
        \nabla_{v_1} y(x)
        & =
        \lim_{\varepsilon \rightarrow 0} \frac{\nabla_{v_2} \nabla_{v_1} y\left(x+\varepsilon v_3\right)
        -\nabla_{v_2} 
        \nabla_{v_1} y(x)}{\varepsilon},
\end{aligned}
\end{equation}
as the directional derivatives of $y$. 
If the function $y$ is differentiable at the point 
$x \in \mathbb{R}^d$, then the directional derivative exists along any nonzero vector $v\in \mathbb{R}^d$. 
In this case, we have
\begin{align*}
    \nabla_{v}y(x)=
    \left \langle
    \nabla y(x), v
    \right\rangle.
\end{align*}
One knows 
$\nabla y(x) \in \mathbb{R}^d, \nabla^2 y(x) \in \mathbb{R}^{d \times d}, 
\nabla^3 y(x) \in$ $\mathbb{R}^{d \times d \times d}$. 
Moreover, we define the operator norm of $\nabla^k y(x), k=1,2,3$ by
\begin{align}
\label{SFS-W2-eq:operator-norm-def}
    \big\|
    \nabla^k y(x)
    \big\|
    =
    \big\|
    \nabla^k y(x)
    \big\|_{\mathrm{op}}
    :=
    \sup_{\|v_i\|=1, i=1, \ldots, k}
    \left\|
    \nabla_{v_k} 
    \ldots 
    \nabla_{v_1} y(x)
    \right\|.
\end{align} 
For the vector-valued function $\mathbf{u}: \mathbb{R}^d \rightarrow \mathbb{R}^{\ell}, \mathbf{u}=\left(u_{(1)}, \ldots, u_{(\ell)}\right)^{\mathrm{T}}$, we regard its first order partial derivative as the Jacobian matrix:
\begin{align*}
    D \mathbf{u}=\left(\begin{array}{ccc}
\frac{\partial u_{(1)}}{\partial x_1} & \cdots & \frac{\partial u_{(1)}}{\partial x_d} \\
\vdots & \ddots & \vdots \\
\frac{\partial u_{(\ell)}}{\partial x_1} & \cdots & \frac{\partial u_{(\ell)}}{\partial x_d}
\end{array}\right)_{\ell \times d} .
\end{align*}
For any $v_1 \in \mathbb{R}^d$, one knows $D(\mathbf{u}) v_1 \in \mathbb{R}^{\ell}$ and one can define $D^2 \mathbf{u}\left(v_1, v_2\right)$ as
\begin{align*}
    D^2 \mathbf{u}\left(v_1, v_2\right):=D\left(D(\mathbf{u}) v_1\right) v_2, \quad \forall v_1, v_2 \in \mathbb{R}^d .
\end{align*}
In the same manner, one can define 
\[
 D^3 \mathbf{u}
 (v_1,v_2,v_3)
 :=
 D\big(
 D\left(D(\mathbf{u}) v_1\right) v_2\big)v_3, 
 \quad \forall v_1, v_2,v_3 \in \mathbb{R}^d .
\]

\subsection{The Schr{\"o}dinger-F{\"o}llmer process with temperatures}  
In this subsection,
we will recall the Schr{\"o}dinger-F{\"o}llmer process with temperatures. 
We begin with the following fundamental assumption.
\begin{assumption}
\label{ass-abso-cont}
Let the target distribution 
$\mu( \mathrm{d} x) \propto  \exp(-V(x))\mathrm{d} x $ over $x \in \mathbb{R}^d$, 
be absolutely continuous with respect to the $d$-dimensional 
Gaussian distribution denoted by $\mathcal{N} (0,\beta \,\mathbf{I}_d)$.    
\end{assumption}
Recall the Schr{\"o}dinger-F{\"o}llmer process with temperatures introduced in the latest  work \cite{wang2025multimodal}:
\begin{equation}
\label{eq-temp-SDE}
\dd 
X_{t} 
= f_{ \beta}(t,X_{t})  \dd t 
+ 
\sqrt{\beta}  
\,
\dd W_{t}, 
\quad X_{0} = 0, 
\quad t \in (0,1],
\end{equation}
where $\beta \in (0, \infty)$ serves as a temperature parameter
and the drift term $f_{\beta}$ is given by:
\begin{equation}
\label{eq-sfs-f-beta}
f_{\beta}(t, x) := \beta  
\,
\nabla \log \big( Q_{1-t}^{\beta} g_{\beta}(x) \big), 
\quad x \in \mathbb{R}^{d},  
\quad t \in [0,1].
\end{equation}
Here $g_\beta$ denotes the Radon-Nikodym derivative of $\mu$ 
with respect to the 
$\mathcal{N} (0,\beta \,\mathbf{I}_d)$:
\begin{equation}
\label{eq-sfs-g-beta}
g_{\beta}(x) := \tfrac{\mathrm{d}\mu}{\mathrm{d}\mathcal{N}(0, 
\beta \, \mathbf{I}_{d})}(x) = \tfrac{(2\pi\beta)^{d/2}}{C} \exp \left( -V(x) + \tfrac{\|x\|^{2}}{2\beta} \right), 
\quad x \in \mathbb{R}^{d}, 
\end{equation}
and $\{ Q_{t}^{\beta} \}_{t \in [0,1]}$ denotes the heat semigroup defined by
\begin{equation}
\label{eq-sfs-Qt-beta}
Q_{t}^{\beta} g_{\beta}(x) := \mathbb{E}_{\xi} \big[ g_{\beta}(x + \sqrt{t \beta} \, \xi) \big],
\quad t \in [0,1], 
\quad \xi \sim \gamma^{d},
\end{equation}
where $\gamma^{d}$ denotes the $d$-dimensional standard Gaussian distribution.
In light of Stein's lemma \cite[Lemma 3.6.5]{garthwaite2002statistical}, which enables us to avoid the calculation of $\nabla g_{\beta}$, one can obtain, for $t \in [0,1)$,
\begin{equation}
\mathbb{E}_{\xi} 
\big[ \nabla g_{\beta}(x + \sqrt{(1-t)\beta} \, \xi) \big] = \tfrac{1}{\sqrt{(1-t)\beta}} \mathbb{E}_{\xi} \big[ \xi \, g_{\beta}(x + \sqrt{(1-t)\beta}\,  \xi) \big].
\end{equation}
This identity allows us to recast the drift as a gradient-free form:
\begin{equation}
    \label{eq-sfs-exact-dens}
    \begin{aligned}
        f_{\beta}(t,x) &= \frac
            {\beta  \,\mathbb{E}_{\xi} 
            \big[ \nabla g_{\beta}
                (x + \sqrt{(1-t)\beta} \, \xi) 
            \big]}
            { \mathbb{E}_{\xi} 
            \big[ g_{\beta}(x + \sqrt{(1-t)\beta} \, \xi) 
            \big] } \\
            &= \frac
            {\beta\,  \mathbb{E}_{\xi} 
            \big[ \xi\,  g_{\beta}(x + \sqrt{(1-t)\beta} \, \xi) 
            \big]}
            { \mathbb{E}_{\xi} 
            \big[ g_{\beta}(x + \sqrt{(1-t)\beta} \, \xi) 
            \big] \cdot \sqrt{(1-t)\beta} }, 
            \quad \xi \sim \gamma^{d},
            \quad 
            t \in [0,1).
    \end{aligned}
\end{equation} 
In the special case $ \beta = 1 $, 
the temperature-dependent process reduces to the standard Schr\"odinger-F\"ollmer process, 
which has been previously studied in the literature \cite{huang2025schrodinger,dai2023global,ruzayqat2023unbiased}.
To ensure the well-posedness of the Schr{\"o}dinger-F{\"o}llmer diffusion process,
we make the following assumption.
\begin{assumption}
\label{ass-RN-lip}
Suppose that the Radon-Nikodym derivative $g_\beta$ and $\nabla g_\beta$ 
are $L_g$-Lipschitz continuous, 
and $g_\beta$ is uniformly bounded below by a positive constant $\rho$:
\begin{equation}
    \label{eq-RN-lip}
    g_\beta \geq \rho > 0.
\end{equation}
\end{assumption}

Under the above assumptions, 
we have the following well-posedness of the Schr{\"o}dinger-F{\"o}llmer diffusion process (\ref{eq-temp-SDE}) (see \cite{huang2025schrodinger,wang2025multimodal}).
\begin{proposition}
\label{prop-drift-lip}
    Let Assumptions \ref{ass-abso-cont}, \ref{ass-RN-lip} hold. 
    Then for any $t \in [0,1]$, 
    the drift coefficient $f_{\beta}(t, \cdot): \mathbb{R}^{d} \to \mathbb{R}^{d}$ 
    of SDE (\ref{eq-temp-SDE}) is Lipschitz continuous and of linear growth. 
    That is, there exist constants $L_{f}, \hat{L}_{f} > 0$, 
    independent of $d$, such that, for any $x, y \in \mathbb{R}^{d}$ and $t \in [0,1]$,
\begin{equation}
    \label{eq-lip-fx}
\|f_{\beta}(t,x) - f_{\beta}(t,y)\| \leq L_{f} \|x - y\|, 
\end{equation}
and
\begin{equation}
\begin{aligned}
    \label{eq-linear-growth}
\|f_{\beta}(t, x)\| 
\leq \|f_{\beta}(t,0)\|+  L_{f} \|x\| 
\leq \hat{L}_{f} + L_{f} \|x\|,
\end{aligned}
\end{equation}
where $L_f :=  \big(1+\tfrac{L_g}{\rho}
    \big)
    \tfrac{\beta L_g}{\rho},
    \,
    \hat{L}_f := \tfrac{\beta L_g}{\rho}$.
Then the Schr{\"o}dinger-F{\"o}llmer diffusion (\ref{eq-temp-SDE}) has a unique strong solution $\{X_{t}\}_{t \in [0,1]}$ satisfying $X_{1} \sim \mu$.
\end{proposition}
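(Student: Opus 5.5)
The plan is to write the drift as a quotient. For $t\in[0,1]$ set
\[
G(t,x) := Q^\beta_{1-t} g_\beta(x) = \mathbb{E}_\xi\big[g_\beta(x+\sqrt{(1-t)\beta}\,\xi)\big],
\qquad
f_\beta(t,x) = \beta\,\frac{\nabla G(t,x)}{G(t,x)},
\]
where the first (gradient) form in \eqref{eq-sfs-exact-dens} is used, since it remains valid at $t=1$. Assumption \ref{ass-RN-lip} transfers directly to $G$:

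1. Since $g_\beta\ge\rho$, we get $G\ge\rho$.
2. Since $g_\beta$ is $L_g$-Lipschitz, differentiation under the expectation (justified by dominated convergence) gives $\|\nabla G(t,x)\| = \|\mathbb{E}_\xi[\nabla g_\beta(\cdot)]\|\le L_g$.
3. Since $\nabla g_\beta$ is $L_g$-Lipschitz, $\|\nabla G(t,x)-\nabla G(t,y)\|\le L_g\|x-y\|$.

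All three bounds are uniform in $t$ and involve no dimension-dependent constants, because each one passes through the expectation pointwise.

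For the Lipschitz estimate \eqref{eq-lip-fx}, split the difference of quotients:
\[
\frac{\nabla G(x)}{G(x)}-\frac{\nabla G(y)}{G(y)}
= \frac{\nabla G(x)-\nabla G(y)}{G(x)}
+ \nabla G(y)\,\frac{G(y)-G(x)}{G(x)G(y)}.
\]
The first term is at most $\tfrac{L_g}{\rho}\|x-y\|$. In the second term, $\|\nabla G(y)\|\le L_g$, $|G(x)-G(y)|\le L_g\|x-y\|$, and $G(x)G(y)\ge\rho^2$, so it is at most $\tfrac{L_g^2}{\rho^2}\|x-y\|$. Multiplying by $\beta$ gives
\[
L_f = \beta\Big(\frac{L_g}{\rho}+\frac{L_g^2}{\rho^2}\Big) = \Big(1+\frac{L_g}{\rho}\Big)\frac{\beta L_g}{\rho}.
\]

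The linear growth bound \eqref{eq-linear-growth} follows from the triangle inequality together with $\|f_\beta(t,0)\|\le \beta\|\nabla G(t,0)\|/G(t,0)\le \beta L_g/\rho=\hat L_f$. In fact the same bound holds at every $x$, so $f_\beta$ is globally bounded.

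For well-posedness, we also need measurability, or better continuity, of $f_\beta$ in $t$. By dominated convergence, $G$ and $\nabla G$ are continuous in $t$ on $[0,1]$, using Lipschitz continuity of $g_\beta$ and $\nabla g_\beta$ and the Gaussian moments of $\xi$. Since $G\ge\rho$, $f_\beta$ is then jointly continuous. The drift is globally Lipschitz in $x$ uniformly in $t$, has linear growth, and the noise is additive, so standard Itô theory gives a unique strong solution on $[0,1]$ with finite moments.

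The main obstacle is identifying the time-one law, $X_1\sim\mu$. The plan is to invoke F\"ollmer's construction, as in \cite{huang2025schrodinger,wang2025multimodal}. Rescale $B_t := \sqrt\beta\,W_t$, so that $B_1\sim\mathcal{N}(0,\beta\mathbf I_d)$, and define the measure $\mathrm d\mathbb Q := g_\beta(B_1)\,\mathrm d\mathbb P$; this is a probability measure because $\mathbb E[g_\beta(B_1)]=\int g_\beta\,\mathrm d\mathcal N(0,\beta\mathbf I_d)=1$. By the Markov property, its density process is
\[
M_t = \mathbb{E}[g_\beta(B_1)\mid\mathcal F_t] = Q^\beta_{1-t}g_\beta(B_t) = G(t,B_t).
\]
Applying Itô's formula to $\log M_t$, and using that $G$ solves the backward heat equation $\partial_t G + \tfrac{\beta}{2}\Delta G = 0$, gives
\[
\mathrm dM_t = M_t\,\nabla\log G(t,B_t)\cdot \mathrm dB_t .
\]
Girsanov's theorem, applicable because the drift is bounded so Novikov's condition holds, then shows that under $\mathbb Q$ the process $B$ solves \eqref{eq-temp-SDE}. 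Hence $\mathbb Q(B_1\in A)=\int_A g_\beta\,\mathrm d\mathcal N(0,\beta\mathbf I_d)=\mu(A)$. Finally, pathwise uniqueness implies uniqueness in law (Yamada–Watanabe), so the strong solution $X$ also satisfies $X_1\sim\mu$.

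The delicate points are two technical justifications. First, $G$ must be $C^{1,2}$ on $[0,1)\times\mathbb R^d$ so that Itô's formula applies; this holds because for $t<1$ the Gaussian convolution smooths $g_\beta$. Second, we must pass to the limit $t\to1$, which follows from continuity of $G$ and $\nabla G$ up to $t=1$.
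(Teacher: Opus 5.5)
Your proposal is correct and follows the same route as the proof the paper relies on. The paper gives no argument of its own and defers to \cite{huang2025schrodinger,wang2025multimodal}, whose argument you reproduce: write $f_\beta=\beta\,\nabla Q^\beta_{1-t}g_\beta/Q^\beta_{1-t}g_\beta$, use $g_\beta\ge\rho$ and the $L_g$-Lipschitz bounds on $g_\beta,\nabla g_\beta$ to obtain exactly $L_f=(1+\tfrac{L_g}{\rho})\tfrac{\beta L_g}{\rho}$ and the uniform bound $\beta L_g/\rho$, then identify $X_1\sim\mu$ through F\"ollmer's Girsanov/$h$-transform argument with density $g_\beta(\sqrt{\beta}\,W_1)$ (your Novikov check is harmless but unnecessary, since $\mathbb{E}[g_\beta(\sqrt{\beta}\,W_1)\mid\mathcal{F}_t]$ is already a true martingale).
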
 
As shown in \cite[Remark III.1]{huang2025schrodinger}, under Assumptions \ref{ass-abso-cont}, \ref{ass-RN-lip}, one can get $\tfrac{1}{2}$-H\"older continuity of the time-dependent drift with respect to the time variable:
\begin{align}
\label{eq:b-lipschitz-in-x-houder-in-t}
     \|f_{\beta}(t, x)-f_{\beta}(s, y)\|
     \leq 
     \bar{L}_f(\|x-y\|
     +
     d^\frac{1}{2}
     |t-s|^\frac{1}{2}).
\end{align}
Furthermore, we have the following moment estimates and $\tfrac{1}{2}$-H\"older continuity of the process $\{X_t\}_{t \in [0,1]}$, which can be found in Lemma \cite[Lemma 2.4]{wang2025multimodal}.

\begin{lemma}
\label{lem:moments-bounded}

Let Assumptions \ref{ass-abso-cont} and \ref{ass-RN-lip} hold. 
Then the solution of SDE (\ref{eq-temp-SDE}), 
denoted by $\{X_t\}_{t\in[0,1]}$ satisfies, 
for any $t\in[0,1]$ and $0\leq t_1\leq t_2\leq 1$,
\begin{equation}
\mathbb{E}_{W}
\left[
\|X_{t}\|^{2}\right] \leq M_1 d, 
\end{equation}
and
\begin{equation}
\mathbb{E}_{W}\left[\|X_{t_{2}}-X_{t_{1}}\|^{2}\right] \leq M_2 d | t_2 - t_1|,
\end{equation}
where $M_1 := 2\big(2\hat{L}_{f}^{2}+\beta\big) \exp\left(4L_{f}^{2}\right)$
and $M_2 :=  8L_{f}^{2} \exp(4L_{f}^{2}) (2\hat{L}_{f}^{2}+\beta) + 4\hat{L}_{f}^{2} + 2\beta$,
with constants $\hat{L}_{f}$ and $L_{f}$ coming from (\ref{eq-lip-fx}) and (\ref{eq-linear-growth}).
\end{lemma}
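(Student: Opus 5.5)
The proof uses the integral form of SDE \eqref{eq-temp-SDE}, the linear growth bound \eqref{eq-linear-growth} of Proposition \ref{prop-drift-lip}, and Gronwall's inequality. Since $X_0=0$, for every $t\in[0,1]$
\[
X_t=\int_0^t f_\beta(s,X_s)\,\dd s+\sqrt{\beta}\,W_t .
\]

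\textbf{Second-moment bound.} Apply $\|a+b\|^2\le 2\|a\|^2+2\|b\|^2$ and Cauchy--Schwarz in time (using $t\le 1$). This gives
\[
\mathbb{E}_W\|X_t\|^2\le 2\int_0^t \mathbb{E}_W\|f_\beta(s,X_s)\|^2\dd s+2\beta t d .
\]
By \eqref{eq-linear-growth}, $\|f_\beta(s,x)\|^2\le 2\hat L_f^2+2L_f^2\|x\|^2$. Hence
\[
\mathbb{E}_W\|X_t\|^2\le 4\hat L_f^2+2\beta d+4L_f^2\int_0^t\mathbb{E}_W\|X_s\|^2\dd s .
\]
Since $d\ge1$, the constant term is at most $2(2\hat L_f^2+\beta)d$. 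Gronwall's inequality then yields $\mathbb{E}_W\|X_t\|^2\le 2(2\hat L_f^2+\beta)d\,e^{4L_f^2}=M_1 d$.

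One technical point must be checked first: the function $t\mapsto\mathbb{E}_W\|X_t\|^2$ has to be finite and measurable before Gronwall can be applied. This follows from the standard moment theory for strong solutions of SDEs with Lipschitz, linear-growth coefficients, which Proposition \ref{prop-drift-lip} guarantees. Alternatively, one can run the same argument with a stopping-time localization $\tau_R=\inf\{t:\|X_t\|\ge R\}$ and let $R\to\infty$ via Fatou. I expect this to be the only mildly delicate step; everything else is routine.

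\textbf{Increment bound.} For $0\le t_1\le t_2\le1$,
\[
X_{t_2}-X_{t_1}=\int_{t_1}^{t_2}f_\beta(s,X_s)\dd s+\sqrt\beta\,(W_{t_2}-W_{t_1}).
\]
By the same splitting and Cauchy--Schwarz,
\[
\mathbb{E}_W\|X_{t_2}-X_{t_1}\|^2\le 2(t_2-t_1)\int_{t_1}^{t_2}\big(2\hat L_f^2+2L_f^2\,\mathbb{E}_W\|X_s\|^2\big)\dd s+2\beta d(t_2-t_1).
\]
Insert the first bound $\mathbb{E}_W\|X_s\|^2\le M_1 d$ and use $(t_2-t_1)^2\le t_2-t_1$ and $d\ge1$. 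This gives the bound $\big(4\hat L_f^2+4L_f^2M_1+2\beta\big)d|t_2-t_1|$. Substituting $M_1$, the middle term is $4L_f^2M_1=8L_f^2e^{4L_f^2}(2\hat L_f^2+\beta)$, so the constant is exactly $M_2$.
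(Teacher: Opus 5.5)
Your argument is correct and produces exactly the stated constants $M_1$ and $M_2$. The paper gives no proof of its own here and defers to \cite[Lemma 2.4]{wang2025multimodal}; the form of $M_1$ and $M_2$ indicates that this is the same integral-form, linear-growth, Gronwall argument you wrote. The integrability point you flag is also immediate: $\|f_\beta(t,x)\|\le \beta L_g/\rho=\hat L_f$, because $\|\nabla g_\beta\|\le L_g$ and $Q^\beta_{1-t}g_\beta\ge\rho$, so $\|X_t\|\le \hat L_f+\sqrt{\beta}\,\|W_t\|$ and no localization is needed.
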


\section{Accelerated Schr\"odinger-F\"ollmer samplers}
\label{sec:sfs}
\noindent
One can sample from the target distribution 
$\mu$ by solving SDE \eqref{eq-temp-SDE}. 
In practice, this continuous-time process needs to be discretized to produce tractable numerical approximations.

\subsection{Accelerated Schr\"odinger-F\"ollmer sampler with exact drift}
The canonical approach to deriving high-order strong approximation schemes for SDEs is to employ It\^o Taylor expansions, as delineated in the classical works \cite{kloeden1992numerical,milstein2004stochastic}. 
For $N\in \mathbb{N} $, we define a uniform temporal partition of $[0,1]$ 
by $t_n = nh$ for $0 \leq n\leq N$, 
where $h=1/N$ stands for the uniform time step size. 
As established in \cite{milstein2004stochastic}, when the drift coefficient $f_{\beta}$ satisfy appropriate smoothness and the boundedness conditions, an order $1.5$ strong Taylor scheme for the SDE \eqref{eq-temp-SDE} can be constructed as follows:
\begin{equation}
\label{eq-sde-sfs-taylor}
\begin{aligned}
    Y_{n+1} 
    &= Y_n 
    + 
    f_\beta(t_n,Y_n) h 
    + \sqrt{\beta} \Delta W_n 
    +  \sqrt{\beta} Df_\beta(t_n,Y_n) \Delta Z_{n+1} \\
& \quad
    + 
    \tfrac{h^2}{2}
    \Big( \partial_t f_\beta(t_n,Y_n)
    + D f_\beta(t_n,Y_n) f_\beta(t_n,Y_n)
    +
    \tfrac{\beta}{2} \sum_{j=1}^d D^2 f_\beta(t_n,Y_n)[e_j,e_j]
    \Big),
\end{aligned}
\end{equation}
where both $\Delta W_n$ and $\Delta Z_n$  are Gaussian and given by
\begin{equation}
\label{eq-srk-Wn-Zn}
    \Delta W_n = \int_{t_n}^{t_{n+1}} \mathrm{d} W_s,
    \quad
    \Delta Z_n 
    = \int_{t_n}^{t_{n+1}}\int_{t_n}^{s} \mathrm{d} W_r
    \,
    \mathrm{d} s
    = \int_{t_n}^{t_{n+1}} 
    (t_{n+1}-r) 
    \,
    \mathrm{d} W_r.
\end{equation}
A primary drawback of the above scheme \eqref{eq-sde-sfs-taylor} lies in the need to compute higher-order derivatives of the drift coefficient at each step.
This requirement can incur significant computational costs in high-dimensional settings and may limit practical applicability.
Consequently, we seek a higher-order method that avoids complex derivative calculations.
Following the idea of Runge-Kutta methods, we propose a class of stochastic Runge-Kutta Schr\"odinger-F\"ollmer sampler (SRKSFS) algorithms that achieve the same order of accuracy while avoiding explicit derivative evaluations. 
More precisely, we introduce a specific SRK method of order $1.5$ as follows:
\begin{equation}
\label{eq-sfs-srk-general}
\begin{aligned}
    Y_{n+1} 
    &= 
    Y_n  
    + (1-\omega_1-\omega_2) f_\beta(t_n,Y_n) h
    + \omega_1 f_\beta(t_n + c_1 h,\Phi_n^1) h\\
    & \quad +
    \omega_2 f_\beta(t_n + c_2 h,\Phi_n^2) h
    +\sqrt{\beta} \Delta W_n,
\end{aligned}
\end{equation}
where the stages $\Phi_n^i$, $i=1, 2$, are given by
\begin{equation}
    \begin{aligned}
        \Phi_n^1 
        &= 
        Y_n + a_{11} f_\beta(t_n,Y_n) h
        + 
        \sqrt{\beta} \,
        b_1 \tfrac{\Delta Z_n}
        {h},\\
        \Phi_n^2 &= Y_n + a_{21} f_\beta(t_n,Y_n) h 
        + a_{22} f_\beta(t_n + c_1 h,\Phi_n^1) h
        + \sqrt{\beta} \,
        b_2 \tfrac{\Delta Z_n}
        {h}.
    \end{aligned}
\end{equation}
To arrive at the desired order $1.5$, coefficients of SRKSFS \eqref{eq-sfs-srk-general} must satisfy the following order conditions:
\begin{align}
    \label{eq-part-t-f}
    c_1 \omega_1 + c_2 \omega_2 &= \tfrac{1}{2}, \\
    \label{eq-D-f-f}
    a_{11} \omega_1 + (a_{21}+a_{22}) \omega_2 &= \tfrac{1}{2}, \\
    \label{eq-D-f-Z}
    b_1 \omega_1 + b_2 \omega_2 &= 1, \\
    \label{eq-D2-f}
    \tfrac{1}{2} (b_1^2 \omega_1 + b_2^2 \omega_2) &= \tfrac{3}{4},
\end{align}
where, by the It\^o Taylor expansion, conditions \eqref{eq-part-t-f}, \eqref{eq-D-f-f} and \eqref{eq-D-f-Z} are used to match the coefficients of $\partial_t f_\beta(t_n,Y_n)$, $ D f_\beta(t_n,Y_n)  f_\beta(t_n,Y_n)$ and $D f_\beta(t_n,Y_n)\tfrac{\Delta Z_{n+1}}{h}$ with those in \eqref{eq-sde-sfs-taylor}, respectively.
Condition \eqref{eq-D2-f} arises from matching $\sum_{j=1}^d D^2 f_\beta[e_j,e_j]$, using the fact that $\mathbb{E}[\|\Delta Z_{n}\|^2] = \tfrac{h^3}{3}$.
Solving these coefficient conditions, 
we obtain a simplified SRKSFS scheme requiring only two drift function evaluations:
\begin{equation}
    \label{eq-srk-appr}
    \begin{aligned}
        Y_{{n+1}} 
        &= Y_{n} 
        + \tfrac{1}{3}
        f_\beta(t_n,Y_{n})h 
        + \tfrac{2}{3}
        f_\beta(t_n+\tfrac{3}{4}h,H_{{n}})h 
        + \sqrt{\beta}
        \Delta W_n,
        \quad
        n\in [N-1]_0,
    \end{aligned}
\end{equation}
where $Y_0=0$ and the stage $H_n$ is defined by 
\begin{equation}
   H_{{n}} 
        = Y_{n} 
        + \tfrac{3}{4}f_\beta(t_n,Y_{n})h 
        + \tfrac{3\sqrt{\beta}\Delta Z_n}{2h}.  
\end{equation}
In practice, the pairs $(\Delta W_n, \Delta Z_n)_{n\in \N}$
can be generated by two sequences of independent standard Gaussian random variables \(\xi_n\), \(\eta_n \sim \mathcal{N}(0, I_d)\) via the linear transformation 
\begin{equation}\label{eq:deltaW-deltaZ-implement}
\Delta W_n 
= 
h^{\frac{1}{2}} \xi_n,
\quad 
\Delta Z_n
=
h^{\frac{3}{2}}
\big(
\tfrac{1}{2}
\xi_n 
+
\tfrac{1}{2\sqrt{3}}
\eta_n
\big).
\end{equation}

In order to attain the higher-order convergence, we need the following smoothness assumptions. 

\begin{assumption}
    \label{ass-f-3continuous}
For any $t\in[0,1]$, the drift coefficient 
$ f_\beta (t, \cdot) : \mathbb{R}^d \to \mathbb{R}^d$
of SDE (\ref{eq-temp-SDE}) 
is three times continuously differentiable 
with bounded partial derivatives:
there exists a constant $L_f > 0,$ 
which is independent of $d, t$,
such that, for any $x, v_1, v_2, v_3  \in \mathbb{R} ^d$,
    \begin{align*}
        \|Df_\beta (t,x) v_1\| \leq &L_f\|v_1\|,\\
        \|D^2f_\beta (t,x)[v_1,v_2]\| \leq & L_f\|v_1\|\cdot\|v_2\|,\\
        \|D^3f_\beta (t,x)[v_1,v_2,v_3]\| \leq & L_f\|v_1\|\cdot\|v_2\|\cdot\|v_3\|.
    \end{align*}
Moreover, for any $x\in \mathbb{R} ^d$,
the function 
$f (\cdot, x) : [ 0, 1) \to \mathbb{R} ^d$ is assumed to be twice continuously differentiable and
there exists a constant $\tilde{L} _{f}> 0$,
independent of $d$ and $t$, such that for any $t\in [ 0, 1)$, 
    \begin{align*}
    \|\partial_t f_\beta (t,x)\| &\leq \tilde{L}_f d^{\frac{1}{2}} \tfrac{1}{\sqrt{1-t}},\\
    \|\partial_{tt} f_\beta (t,x)\| &\leq \tilde{L}_f d \tfrac{1}
    {\sqrt{(1-t)^{3}}}.
    \end{align*}   
Additionally, the drift coefficient  $f(\cdot,\cdot):[0,1)\times \R^d \rightarrow \R^d$ of SDE (\ref{eq-temp-SDE}) 
is assumed to be three times continuously mixed differentiable and there exists a constant $\tilde{L}_{f}> 0$,
independent of $d$ and $t$, such that 
    \begin{align}
    \label{eq:mixed-diff}
        \|D\partial_t f_\beta (t,x)v_1\| \leq &\tilde{L}_f d^{\frac{1}{2}} \tfrac{1}{\sqrt{1-t}}\|v_1\|,
        \nonumber\\
        \big\|D^2\partial_{t} f_\beta (t,x)[v_1,v_2]\big\| \leq & \tilde{L}_f d^{\frac{1}{2}} \tfrac{1}{\sqrt{1-t}}\|v_1\|\cdot\|v_2\|.
    \end{align}
\end{assumption}
Note that Assumption \ref{ass-f-3continuous} immediately implies that, for any $x,y,v_1,v_2 \in \mathbb{R}^d,
 $ and $ t,s \in [0,1],$ 
\begin{equation}
\begin{aligned}
\label{eq:Df-lip}
\big\|
Df_{\beta}(t,x) v_1
-
Df_{\beta}(t,y) v_1
\big\|
& \leq 
L_f 
\|x-y\|\cdot\|v_1\|,\\
\big\|
D^2 f_{\beta}
(t,x)[v_1,v_2]
-
D^2 f_{\beta}
(t,y)[v_1,v_2]
\big\|
& \leq 
L_f 
\|x-y\|
\cdot\|v_1\|
\cdot\|v_2\|.
\end{aligned}
\end{equation}
Following an argument similar to that in \cite[Appendix C]{huang2025schrodinger},
we also obtain
\begin{equation}
\label{eq:Df-D2f-x-lip-t-1/2}
\begin{aligned}
\big\|
Df_{\beta}(t,x) v_1
-
Df_{\beta}(s,y) v_1
\big\|
& \leq 
\hat{L}_f 
\big(
\|x-y\|
+
d^{\frac{1}{2}}
|t-s|^{\frac{1}{2}}
\big)
\cdot\|v_1\|,\\
\big\|
D^2f_{\beta}(t,x)[v_1,v_2]
-
D^2f_{\beta}(s,y)[v_1,v_2]
\big\|
& \leq 
\hat{L}_f 
\big(
\|x-y\|
+
d^{\frac{1}{2}}
|t-s|^{\frac{1}{2}}\big)
\cdot\|v_1\|
\cdot\|v_2\|.
\end{aligned}
\end{equation}
As a direct result of \eqref{eq:mixed-diff}, we also obtain, for any $x \in \mathbb{R}^d, \  t \in [0,1)$,
\begin{equation}
    \| D\partial_t
    f_{\beta}(t, x) \|_{\mathrm{F}} 
    \leq 
    \sqrt{d} \| D\partial_tf_{\beta}(t, x) \| 
    = \sqrt{d}
    \Big( 
    \sup_{\| v_1 \|=1} \| D\partial_tf_{\beta}(t, x) v_1 \|
    \Big) 
    \leq 
    \tfrac{\tilde{L}_f d}{\sqrt{1-t}},
\end{equation}
and
\begin{equation}
\begin{aligned}
\| Df_{\beta}(t, x) 
-
Df_{\beta}(s, y) 
\|_{\mathrm{F}} 
& \leq 
\sqrt{d}
\| Df_{\beta}(t, x) 
-
Df_{\beta}(s, y) \| \\
& =
\sqrt{d}
\Big( 
\sup_{\| v_1 \|=1} 
\| Df_{\beta}(t, x) v_1
-
Df_{\beta}(s, y) v_1 \|
\Big) \\
& \leq 
\hat{L}_f
d^{\frac{1}{2}}
\big(
\|x-y\|
+
d^{\frac{1}{2}}
|t-s|^{\frac{1}{2}}
\big).
\end{aligned}
\end{equation}

It is noted that $L_f$, $\tilde{L}_f$ and $\hat{L}_f$ are generic finite constants with value 
that could change upon each appearance, 
but will not depend upon the dimension $d$ and the step size $h$.
The next proposition shows when Assumption \ref{ass-f-3continuous} is satisfied. 
The proof, postponed to Appendix \ref{app:g-condition}, follows arguments similar to those in \cite{wang2025multimodal}.
\begin{proposition}
\label{prop:g-assumption}
    For $\beta \in (0, \infty)$, 
    we assume $g_{\beta} \in \mathcal{C}^4(\mathbb{R}^d,\mathbb{R})$
    and there exist constants $L_g, \rho > 0$ 
    such that $g_{\beta}, \nabla g_{\beta}, \nabla^2 g_{\beta}, \nabla^3 g_{\beta}$ are $L_g$-Lipschitz continuous and \eqref{eq-RN-lip} holds.
    Then Assumption \ref{ass-f-3continuous} is  satisfied with 
    $L_f := \tfrac{\beta L_g}{\rho}(1+\tfrac{7L_g}{\rho}+\tfrac{12L_g^2}{\rho^2}+\tfrac{6L_g^3}{\rho^3})$ 
    and 
    $\tilde{L}_f := \tfrac{\beta^2 L_g}{2\rho}(2+\tfrac{4L_g}{\rho}+\tfrac{2L_g^2}{\rho^2})$.
\end{proposition}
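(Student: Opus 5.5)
We write $s:=\sqrt{(1-t)\beta}$, $\Lambda(x):=Q^\beta_{1-t}g_\beta(x)=\mathbb{E}_\xi[g_\beta(x+s\xi)]$, and $f_\beta=\beta\nabla\Lambda/\Lambda$. The plan has two parts.

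First, the spatial derivatives. Since $\nabla^k g_\beta$ is $L_g$-Lipschitz for $k=0,1,2,3$, and $g_\beta\in\mathcal C^4$, we may differentiate under $\mathbb{E}_\xi$. Thus $\nabla^k\Lambda=\mathbb{E}_\xi[\nabla^k g_\beta(x+s\xi)]$ for $k\le 4$, with operator norms $\|\nabla^k\Lambda\|\le L_g$ for $k=1,\dots,4$. We also have $\Lambda\ge\rho$.

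We then expand $D f_\beta,D^2f_\beta,D^3f_\beta$ by the quotient rule, i.e.\ as derivatives of $\nabla\log\Lambda$. These are polynomials in $\nabla^j\Lambda/\Lambda$. For instance,
\[
Df_\beta=\beta\Big(\tfrac{\nabla^2\Lambda}{\Lambda}-\tfrac{\nabla\Lambda\otimes\nabla\Lambda}{\Lambda^2}\Big),
\]
and $D^3 f_\beta$ involves the terms $\nabla^4\Lambda/\Lambda$, $\nabla^3\Lambda\nabla\Lambda/\Lambda^2$ (4 times), $\nabla^2\Lambda\nabla^2\Lambda/\Lambda^2$ (3 times), $\nabla^2\Lambda(\nabla\Lambda)^2/\Lambda^3$ (12 times) and $(\nabla\Lambda)^4/\Lambda^4$ (6 times). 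Bounding each factor by $L_g/\rho$ in operator norm and counting the terms in the worst case gives the stated constant
\[
\tfrac{\beta L_g}{\rho}\Big(1+\tfrac{7L_g}{\rho}+\tfrac{12L_g^2}{\rho^2}+\tfrac{6L_g^3}{\rho^3}\Big),
\]
which dominates the bounds for $D$ and $D^2$ as well. All bounds are dimension free because operator norms are used throughout.

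Second, the time derivatives. Here we use the heat equation. Since $\partial_t \Lambda=-\tfrac{\beta}{2}\Delta\Lambda$, we get $\partial_t\nabla\Lambda=-\tfrac\beta2\nabla\Delta\Lambda$. A trace of a Hessian costs a factor $d$, which is too much, so we use Stein's lemma instead to trade one derivative for a Gaussian factor:
\[
\Delta\Lambda=\mathbb{E}_\xi\big[\operatorname{div}\nabla g_\beta(x+s\xi)\big]=\tfrac1s\,\mathbb{E}_\xi\big[\langle\xi,\nabla g_\beta(x+s\xi)-\nabla g_\beta(x)\rangle\big].
\]
Subtracting the constant $\nabla g_\beta(x)$ is allowed because $\mathbb{E}\xi=0$. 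The same identity applied to $\nabla^{k}\Lambda$ shows that the terms of the form $\nabla^k\Delta\Lambda$, for $k=0,1,2$, are bounded by
\[
\tfrac{1}{s}\,L_g\,\mathbb{E}\|\xi\|\le \tfrac{L_g\sqrt d}{\sqrt{(1-t)\beta}}.
\]
We then differentiate $f_\beta$ in $t$ via the quotient rule, which gives $\partial_tf_\beta=\beta\big(\tfrac{\partial_t\nabla\Lambda}{\Lambda}-\tfrac{\nabla\Lambda\,\partial_t\Lambda}{\Lambda^2}\big)$. Combining this with the bound above gives $\|\partial_t f_\beta\|\lesssim\tilde L_f\sqrt d/\sqrt{1-t}$, with the constant tracked as $\tfrac{\beta^2L_g}{2\rho}(\cdots)$. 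The mixed derivatives $D\partial_tf_\beta$ and $D^2\partial_tf_\beta$ are handled the same way. They require Laplacians of up to $\nabla^3\Lambda$, which is exactly where the Lipschitz property of $\nabla^3 g_\beta$ is used.

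For $\partial_{tt}f_\beta$ we need $\Delta^2\Lambda$-type terms. We apply Stein's lemma twice, using the second-order Hermite identity
\[
\mathbb{E}\big[\nabla^2 h(s\xi)\big]=s^{-2}\,\mathbb{E}\big[(\xi\xi^{\top}-I)\,h(s\xi)\big],
\]
and place one derivative at each level onto $\nabla^2 g_\beta$ differences. This yields the bound $\mathbb{E}[\|\xi\|^3]/s^3$-type, i.e.\ $d\,(1-t)^{-3/2}$ up to constants, after one Lipschitz increment $L_g s\|\xi\|$ is absorbed.

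The main obstacle is making this second-time-derivative estimate produce exactly $d\,(1-t)^{-3/2}$ rather than $d^{3/2}$ or a worse power of $(1-t)$. I would first try writing $\Delta^2\Lambda=\mathbb{E}[\Delta(\Delta g_\beta)]$ as $s^{-3}\mathbb{E}\big[\big(\|\xi\|^2-d-2\big)\langle\xi,\nabla g_\beta(x+s\xi)-\nabla g_\beta(x)\rangle\big]$. Bounding this by Cauchy–Schwarz, using $\mathrm{Var}(\|\xi\|^2)=2d$, gives $s^{-3}\cdot\sqrt{d}\cdot L_g s\sqrt d=L_g d/s^2$. Combined with the quotient-rule cross terms, which are products of two first-order bounds each of size $\sqrt d/\sqrt{1-t}$, this is bounded by $d(1-t)^{-3/2}$ on $[0,1)$.
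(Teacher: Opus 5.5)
Your spatial-derivative argument and your treatment of $\partial_t f_\beta$, $D\partial_t f_\beta$, $D^2\partial_t f_\beta$ are essentially the paper's. The moment--cumulant expansion of $\nabla^4\log\Lambda$ has coefficients $1,4,3,12,6$, which reproduces the stated $L_f$ exactly. The heat equation plus one Stein step gives the same expressions the paper obtains by differentiating $Z=x+\sqrt{(1-t)\beta}\,\xi$ under $\mathbb{E}_\xi$. Your bound $\tfrac1s L_g\mathbb{E}\|\xi\|$ comes from $\|\nabla^{k+1}g_\beta\|\le L_g$; subtracting $\nabla g_\beta(x)$ plays no role there.

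The gap is in $\partial_{tt}f_\beta$, exactly at the step you call the main obstacle. Your identity
\[
\Delta^2\Lambda=s^{-3}\,\mathbb{E}\big[(\|\xi\|^2-d-2)\,\langle\xi,\nabla g_\beta(x+s\xi)-\nabla g_\beta(x)\rangle\big]
\]
is correct, but the Cauchy--Schwarz step drops a factor $\|\xi\|$. The Lipschitz increment gives $|\langle\xi,\nabla g_\beta(x+s\xi)-\nabla g_\beta(x)\rangle|\le L_g s\|\xi\|^2$, whose $L^2$ norm is $L_g s\sqrt{d^2+2d}\approx L_g s\,d$, not $L_g s\sqrt d$. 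So your computation actually proves only $|\Delta^2\Lambda|\lesssim L_g d^{3/2}s^{-2}$. That is a contribution of order $d^{3/2}(1-t)^{-1}$, which does not give $\tilde L_f\,d\,(1-t)^{-3/2}$ with $\tilde L_f$ independent of $d$ (look at $t=0$). Your earlier heuristic ``$\mathbb{E}\|\xi\|^3/s^3$'' has the same defect, since $\mathbb{E}\|\xi\|^3\sim d^{3/2}$. You also treat only the scalar $\Delta^2\Lambda$, whereas $\partial_{tt}f_\beta$ also contains $\beta\,\partial_t^2\nabla\Lambda/\Lambda=\tfrac{\beta^3}{4}\nabla\Delta^2\Lambda/\Lambda$.

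The remedy is not to spend the Lipschitz increment; either of two routes works.
\begin{itemize}
\item Drop the subtraction and use $\|\nabla g_\beta\|\le L_g$ in your identity. This gives $|\Delta^2\Lambda|\le s^{-3}\sqrt{2d+4}\,L_g\sqrt d\lesssim L_g d\,s^{-3}$, and $\nabla\Delta^2\Lambda$ is handled the same way using $\|\nabla^2 g_\beta\|\le L_g$.
\item Stop at the second-order Hermite identity you already wrote,
\[
\Delta^2\Lambda=s^{-2}\,\mathbb{E}\big[\xi^\top\nabla^2 g_\beta(Z)\xi-\mathrm{tr}\,\nabla^2 g_\beta(Z)\big],
\qquad
\nabla\Delta^2\Lambda=s^{-2}\,\mathbb{E}\Big[\nabla^3 g_\beta(Z)[\xi,\xi,\cdot\,]-\sum_{j=1}^d\nabla^3 g_\beta(Z)[e_j,e_j,\cdot\,]\Big].
\]
Both are at most $2L_g d\,s^{-2}$.
\end{itemize}

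The paper does the equivalent with no Hermite machinery. With $s(t)=\sqrt{(1-t)\beta}$, $s'=-\beta/(2s)$ and $s''=-\beta^2/(4s^3)$, it differentiates twice under the expectation:
\[
\partial_t^2\mathbb{E}[\nabla g_\beta(Z)]=s''\,\mathbb{E}[\nabla^2g_\beta(Z)\xi]+(s')^2\,\mathbb{E}[\nabla^3g_\beta(Z)[\xi,\xi]].
\]
It bounds these crudely by $\sqrt d\,(1-t)^{-3/2}$ and $d\,(1-t)^{-1}$. The scalar $\partial_t^2\Lambda$ is handled identically, and the products of two first-order factors are $O(d\,(1-t)^{-1})$. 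Altogether this gives $d\,(1-t)^{-3/2}$. So the dimension obstacle you anticipate disappears once the derivatives stay on $g_\beta$, which needs only $\|\nabla^2 g_\beta\|,\|\nabla^3 g_\beta\|\le L_g$.
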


Thanks to the above assumptions, we give the following higher-order non-asymptotic error bound in $L^2$-Wasserstein distance for the SRKSFS \eqref{eq-srk-appr}.

\begin{thm}
    \label{thm-exact-drift}
    (Main result: error bounds with exact drift)
    Let Assumptions \ref{ass-abso-cont}, \ref{ass-RN-lip} and \ref{ass-f-3continuous} hold. 
    Let $\{Y_n\}_{n\in [N]_0}$ be 
    defined by \eqref{eq-srk-appr} with the uniform  step size $h=1/N$.
    Then there exists a constant $C$ independent of $d,h$ and $M$, such that,
    \begin{align*}
        \mathcal{W}_2 
        \big(
        \text{Law}\,(Y_{1}), \mu
        \big) 
        \leq
        C
        (dh)^{3/2}
        |\ln h|. 
    \end{align*} 
\end{thm}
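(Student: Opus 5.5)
Since $X_1\sim\mu$ by Proposition \ref{prop-drift-lip}, and $X$ and $Y$ are driven by the same Brownian motion (a synchronous coupling), we have $\mathcal{W}_2(\mathrm{Law}(Y_1),\mu)\le (\mathbb{E}\|X_1-Y_N\|^2)^{1/2}$. It therefore suffices to prove a strong $L^2$ error bound at the final time. We follow the classical fundamental mean-square convergence theorem (Milstein--Tretyakov). Write $X_{t_{n+1}}=\Psi_n(X_{t_n})+R_n$, where $\Psi_n(x)$ is one step of \eqref{eq-srk-appr} started from $x$ at $t_n$, and $R_n$ is the local error. With $e_n:=X_{t_n}-Y_n$ we get
\[
e_{n+1}=e_n+\big[\Psi_n(X_{t_n})-X_{t_n}-\Psi_n(Y_n)+Y_n\big]+R_n .
\]
The Lipschitz bounds \eqref{eq-lip-fx} and \eqref{eq:Df-lip} control the bracket. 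It equals $\mathcal{O}(h)\|e_n\|$, and its conditional mean is $\mathcal{O}(h)e_n$ in the sense of the standard one-step estimate. Squaring and using $\mathbb{E}\langle e_n,R_n\rangle=\mathbb{E}\langle e_n,\mathbb{E}[R_n|\mathcal{F}_{t_n}]\rangle$ gives the recursion
\[
\mathbb{E}\|e_{n+1}\|^2\le(1+Ch)\mathbb{E}\|e_n\|^2+Ch^{-1}\mathbb{E}\|\mathbb{E}[R_n|\mathcal{F}_{t_n}]\|^2+C\mathbb{E}\|R_n\|^2 .
\]
Unlike the classical case, the local error bounds depend on $n$, so we sum them directly via discrete Gronwall instead of using uniform $p_1,p_2$ orders:
\[
\mathbb{E}\|e_N\|^2\le C\sum_n\Big(h^{-1}\|\mathbb{E}[R_n|\mathcal{F}_{t_n}]\|_{L^2}^2+\|R_n\|_{L^2}^2\Big).
\]

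The core step is the local expansion. We write $X_{t_{n+1}}-X_{t_n}=\int_{t_n}^{t_{n+1}}f_\beta(s,X_s)\,ds+\sqrt\beta\Delta W_n$ and Itô-expand $f_\beta(s,X_s)$ around $(t_n,X_{t_n})$. The ingredients are Itô's formula in $x$, using \eqref{eq:Df-lip} and \eqref{eq:Df-D2f-x-lip-t-1/2}, and Taylor expansion in $t$ with $\partial_t f_\beta,\partial_{tt}f_\beta$. This recovers the Taylor scheme \eqref{eq-sde-sfs-taylor} plus remainders. Similarly we Taylor-expand the two stages of $\Psi_n$, namely $f_\beta(t_n+\tfrac34 h,H_n)$ around $(t_n,X_{t_n})$ to second order in $x$ and first order in $t$. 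The order conditions \eqref{eq-part-t-f}--\eqref{eq-D2-f}, with $\omega_1=2/3$, $c_1=a_{11}=3/4$, $b_1=3/2$, cancel the terms $\partial_tf$, $Dff$, $Df\Delta Z/h$ and the mean of the $D^2f$ term. The $D^2f$ cancellation holds only in conditional mean: the fluctuation of $D^2f[\Delta Z,\Delta Z]/h^2$ about its mean is $\mathcal{O}(dh^2)$ in $L^2$ with zero conditional mean, so it contributes only to $\|R_n\|_{L^2}$, at an admissible level. The moment bounds of Lemma \ref{lem:moments-bounded} and the growth bound \eqref{eq-linear-growth} give the $d$-dependence of every remainder.

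The main obstacle is the time singularity near $t=1$. The remainders involving $\partial_{tt}f_\beta$ and $D\partial_tf_\beta$, $D^2\partial_t f_\beta$ carry factors $(1-t)^{-3/2}$ and $(1-t)^{-1/2}$, respectively. On steps with $1-t_{n+1}\ge h$ we would bound them by integrating these weights. For instance, $\|\mathbb{E}[R_n|\mathcal{F}_{t_n}]\|\lesssim d h^3 (1-t_{n+1})^{-3/2}+dh^{5/2}$, which gives
\[
h^{-1}\sum_n d^2h^6(1-t_{n+1})^{-3}\lesssim d^2h^3\sum_k k^{-3}\lesssim d^2h^3 .
\]
The mixed terms should give $\|R_n\|_{L^2}\lesssim d h^2(1-t_{n+1})^{-1/2}+dh^2$. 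Summing their squares yields $d^2h^3\sum_{k\le N}k^{-1}\sim d^2h^3|\ln h|$, and this is the source of the logarithm. On the last step, where the drift is only $\tfrac12$-Hölder in $t$, we would not Taylor-expand in time. Instead we use \eqref{eq:b-lipschitz-in-x-houder-in-t} crudely, giving $\|R_{N-1}\|\lesssim \sqrt d\,h^{3/2}+\dots$, which contributes $\mathcal{O}(dh^3)$ to the sum, and treat its conditional mean the same way.

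Collecting the terms gives $\mathbb{E}\|e_N\|^2\le C d^3h^3|\ln h|^{2}$, with the $d$-powers tracked generously so that the bound fits the stated form. Taking square roots yields $C(dh)^{3/2}|\ln h|$. The delicate bookkeeping is to verify that each singular remainder carries exactly the weight claimed, so that the sum produces at most one logarithm. In particular, terms with $(1-t)^{-1/2}$ must enter only through $\|R_n\|_{L^2}$ and never through $h^{-1}\|\mathbb{E}[R_n|\mathcal{F}_{t_n}]\|^2$ with a worse power. Checking this is where we expect most of the work.
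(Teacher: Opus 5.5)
There is a genuine gap in the recursion you use to accumulate the local errors. You keep the Milstein--Tretyakov weighting $h^{-1}\mathbb{E}\|\mathbb{E}[R_n|\mathcal{F}_{t_n}]\|^2$. It comes from Young's inequality $2\mathbb{E}\langle e_n,\bar R_n\rangle\le h\,\mathbb{E}\|e_n\|^2+h^{-1}\mathbb{E}\|\bar R_n\|^2$, where $\bar R_n:=\mathbb{E}[R_n|\mathcal{F}_{t_n}]$. That weighting only gives order $3/2$ if the conditional mean of the local error is $\mathcal{O}(h^{5/2})$ uniformly, and here it is not.

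By your own estimate $\|\bar R_n\|\lesssim dh^3(1-t_{n+1})^{-3/2}$, so on the step with $1-t_{n+1}=h$ it has size $dh^{3/2}$. On the last step, with your H\"older treatment, the conditional mean is again of size $\sqrt d\,h^{3/2}$. These are drift-type errors with no cancellation in mean. Your displayed sum is wrong by a factor $h^{-1}$. Writing $1-t_{n+1}=kh$,
\[
h^{-1}\sum_{n\le N-2} d^2h^6(1-t_{n+1})^{-3}
= h^{-1}\sum_{k\ge 1} d^2h^6(kh)^{-3}
= d^2h^2\sum_{k\ge1}k^{-3},
\]
not $d^2h^3\sum_k k^{-3}$. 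Likewise the last step contributes $h^{-1}(\sqrt d\,h^{3/2})^2=dh^2$. The recursion as written therefore gives only $\mathbb{E}\|e_N\|^2\lesssim d^2h^2$, i.e.\ first order, and the $h^{3/2}$ rate does not follow. The danger you flag, the $(1-t)^{-1/2}$ terms, is not the real one. The real one is the $(1-t)^{-3/2}$ part of the conditional-mean error, together with the last step.

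The missing idea is that the conditional-mean part of the local error must be accumulated linearly in $L^2$, not quadratically with weight $h^{-1}$. For instance, bound the cross term by $2\|e_n\|_{L^2}\|\bar R_n\|_{L^2}$. The recursion $a_{n+1}^2\le(1+Ch)a_n^2+2a_nr_n+s_n^2$ with $a_0=0$ then gives $\max_n a_n\lesssim\sum_n r_n+\big(\sum_n s_n^2\big)^{1/2}$. Moreover $\sum_n\|\bar R_n\|_{L^2}\lesssim\sum_{k\ge1}dh^{3/2}k^{-3/2}+\sqrt d\,h^{3/2}+N\,dh^{5/2}\lesssim dh^{3/2}$. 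Step-dependent Young weights such as $k^{-3/2}$ would also work. With this change the rest of your local expansion can go through.

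This is what the paper does in different packaging. It telescopes $E_n$ into the Lipschitz differences plus $\sum_{i\le n}R_i$, and applies Gronwall to $\mathbb{E}\|E_n\|^2\le Ch\sum_{i<n}\mathbb{E}\|E_i\|^2+3\,\mathbb{E}\|\sum_{i\le n}R_i\|^2$. It then bounds $\|\sum_i R_i\|_{L^2}$ in two ways:
\begin{itemize}
\item by Minkowski for the non-martingale pieces ($\mathcal{I}^{(1)}_{1,1}$, $\mathcal{I}_{1,3}$, $\mathcal{I}_5$);
\item by orthogonality of martingale increments for the stochastic pieces ($\mathcal{I}^{(3)}_{1,1}$, $\mathcal{I}_4$, $\mathcal{I}_{3,2}$, $\mathcal{I}_{6,2}$).
\end{itemize}
In the paper, the logarithm comes from the crude bound $(t_i-u)^2(1-u)^{-3/2}\le h^{3/2}(1-u)^{-1}$ applied to the $\partial_{tt}f_\beta$ terms inside the Minkowski sum.
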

\begin{proof}
First, we recast the Schr\"odinger-F\"ollmer diffusion \eqref{eq-temp-SDE} as, for any $n\in[N]$,
\begin{equation}
    \label{eq-srk-true}
    \begin{aligned}
        X_{t_{n}} 
        &= 
        X_{t_{n-1}} 
        + 
        \tfrac{1}{3}
        f_\beta(t_{n-1},X_{t_{n-1}})h 
        + 
        \tfrac{2}{3}
        f_\beta(t_{n-1}+\tfrac{3}{4}h,H_{t_{n-1}})h 
        + 
        \sqrt{\beta}
        \Delta W_{n-1}
        +
        R_{n},  
    \end{aligned}
\end{equation}
where for short, we denote
\begin{equation}
H_{t_{n-1}} 
:= 
X_{t_{n-1}} 
+ 
\tfrac{3}{4}
f_\beta(t_{n-1},X_{t_{n-1}})h 
+ 
\tfrac{3\sqrt{\beta}
\Delta Z_{n-1}}
{2h}, 
\end{equation}
and
\begin{equation}
        R_{n} 
        := \int_{t_{n-1}}^{t_{n}} f_\beta (s,X_s)
        \,
        \mathrm{d} s 
        - 
        \tfrac{1}{3}
        f_\beta (t_{n-1},X_{t_{n-1}}) 
        h
        - 
        \tfrac{2}{3}
        f_\beta (t_{n-1}+\tfrac{3}{4}h,H_{t_{n-1}})
        h.
\end{equation}
Denoting 
$$
E_i 
:= 
X_{t_i} - Y_i,
\quad 
i \in [N]_0,
$$
and subtracting \eqref{eq-srk-true} from \eqref{eq-srk-appr} yields
\begin{equation}
\begin{aligned}
\label{eq:en-expre}
        E_n 
        & = 
        E_{n-1} 
        +
        \tfrac{1}{3}
        \big(
        f_\beta 
        (t_{n-1},X_{t_{n-1}})
        -
        f_\beta (t_{n-1},Y_{{n-1}})
        \big)h\\
        & \quad + \tfrac{2}{3}
        \big(
        f_\beta (t_{n-1}
        +\tfrac{3}{4}h,H_{t_{n-1}})
        -
        f_\beta (t_{n-1}+\tfrac{3}{4}h,H_{{n-1}})
        \big)
        h
        +
        R_{n}\\
        & =
        \tfrac{1}{3}
        \sum_{i=0}^{n-1}
        \big(
        f_\beta (t_{i},X_{t_{i}})-f_\beta (t_{i},Y_{{i}})\big)
        h\\
        & \quad + 
        \tfrac{2}{3}
        \sum_{i=0}^{n-1}
        \big(
        f_\beta (t_{i}+\tfrac{3h}{4},H_{t_{i}})
        -
        f_\beta (t_{i}+\tfrac{3h}{4},H_{{i}})
        \big)h
        +
        \sum_{i=1}^{n}
        R_{i},
\end{aligned}
\end{equation}
where we also used the fact that
$X_{0}
=
Y_{0}=0$.
Squaring both sides of \eqref{eq:en-expre}, taking expectations and using the Lipschitz conditions \eqref{eq-lip-fx},
we obtain
\begin{align}
\label{eq:error-propagation}
\mathbb{E}
\big[
\|E_n\|^2
\big]
&\leq
\tfrac{h^2}{3}
\mathbb{E}
\Big[
\big\|
\sum_{i=0}^{n-1}
\big(
f_\beta (t_{i},X_{t_{i}})
-
f_\beta (t_{i},Y_{{i}})
\big)
\big\|^2
\Big]
\nonumber\\
&\quad+ 
\tfrac{4h^2}{3}
\mathbb{E}
\Big[
\big\|
\sum_{i=0}^{n-1}
\big(
f_\beta (t_{i}+\tfrac{3h}{4},H_{t_{i}})
-
f_\beta (t_{i}+\tfrac{3h}{4},H_{{i}})
\big)
\big\|^2 
\Big]
+
3
\,
\mathbb{E}
\Big[
\big\|
\sum_{i=1}^{n}R_{i}
\big\|^2
\Big]
\nonumber\\
&\leq
\tfrac{ L_f^2 h}
{3} 
\sum_{i=0}^{n-1}
\mathbb{E}
\big[
\|E_i\|^2
\big]
+
\tfrac{4L_f^2h}{3}
\sum_{i=0}^{n-1}
\mathbb{E}
\Big[
\big\|
E_i
+
\tfrac{3}{4}
\big(
f_\beta (t_{i},X_{t_{i}})
-
f_\beta (t_{i},Y_{{i}})
\big)
\big\|^2
\Big]
+
3
\,
\mathbb{E}
\Big[
\big\|
\sum_{i=1}^{n}R_{i}
\big\|^2
\Big]
\nonumber
\\
& \leq
\big(
3+\tfrac{3L_f^2}{2}
\big)
L_f^2h
\sum_{i=0}^{n-1}
\mathbb{E}
\big[
\|E_i\|^2
\big]
+
3
\,
\mathbb{E}
\Big[
\big\|
\sum_{i=1}^{n}R_{i}
\big\|^2
\Big].
\end{align}
Before proceeding further, we first note that
\begin{equation}
\begin{aligned}
\sum_{i=1}^{n}R_{i} 
&= 
\sum_{i=1}^{n}
\bigg(
\int_{t_{i-1}}^{t_i} 
f_\beta (s,X_s)
\mathrm{d} s 
-
\tfrac{1}{3}
f_\beta 
(t_{i-1},X_{t_{i-1}})
h
- 
\tfrac{2}{3}
f_\beta 
(t_{i-1}+\tfrac{3}{4}h,H_{t_{i-1}})
h
\bigg).
\end{aligned}
\end{equation}
Then one can do the following error decomposition: 
\begin{align}
\label{eq-srk-sum-Ri}
\sum_{i=1}^{n}R_{i} 
&= 
\sum_{i=1}^{n}
\int_{t_{i-1}}^{t_i} 
\big[ 
f_\beta (s,X_s) 
- 
f_\beta (t_{i-1},X_{t_{i-1}})
\big]
\mathrm{d} s
\nonumber\\
& \quad
-
\tfrac{2}{3}
\sum_{i=1}^{n}
\Big(
f_\beta (t_{i-1}+\tfrac{3}{4}h,H_{t_{i-1}})
-f_\beta (t_{i-1},X_{t_{i-1}})
\Big)h 
\nonumber\\
&= 
\underbrace{
\sum_{i=1}^{n}
\int_{t_{i-1}}^{t_i} 
\big[ 
f_\beta (s,X_s) 
- 
f_\beta (t_{i-1},X_{t_{i-1}}) 
\big]
\mathrm{d} s
}_{=:\mathcal{H}_1}
\nonumber\\
& \quad  -
\underbrace{
\tfrac{2}{3}
\sum_{i=1}^{n}
\Big( 
f_\beta (t_{i-1}+\tfrac{3}{4}h,H_{t_{i-1}})
-
f_\beta (t_{i-1},H_{t_{i-1}}) 
\Big)h 
}_{=:\mathcal{H}_2}
\nonumber\\
& \quad  -
\underbrace{
\tfrac{2}{3}
\sum_{i=1}^{n}
\Big(                 
f_\beta (t_{i-1},H_{t_{i-1}})
-
f_\beta (t_{i-1},X_{t_{i-1}})  
\Big)h
}_{=:\mathcal{H}_3}. 
\end{align}
Applying the It\^o formula for the estimate of $\mathcal{H}_1$, for $ s \in [t_{i-1}, t_i]$ we deduce
\begin{equation}
    \label{eq-srk-H1}
    \begin{aligned}
& f_\beta (s,X_s) 
- 
f_\beta (t_{i-1},X_{t_{i-1}})  \\
        & \quad = 
        \int_{t_{i-1}}^{s}
            \Big[
            \partial_t f_\beta (r,X_r)
            +
            D f_\beta (r,X_r) 
            f_\beta (r,X_r)
                + \tfrac{\beta}
                {2}
                \sum_{j=1}^{d}
                D^2 f_\beta (r,X_{r})
                \left[ e_j,e_j \right]                    
            \Big]
            \mathrm{d} r \\
        & \quad \quad
                + \sqrt{\beta}\int_{t_{i-1}}^{s}
                D f_\beta (r,X_r)
                 \mathrm{d} W_r,
    \end{aligned}
\end{equation}
where $\{e_j\}_{j \in\{1, \cdots, d\}}$ is denoted as the orthonormal basis of $\mathbb{R}^d$.
Before proceeding, we start with the following property, which is a direct result of the Fubini theorem, for any $\varphi: [0,1]\times \R^d \rightarrow \R^d$:
\begin{equation}
\label{eq:inter-integration}
    \begin{aligned}
    \int_{t_{i-1}}^{t_i} 
    \int_{t_{i-1}}^{s} 
    \varphi (r,X_r)
    \,
    \mathrm{d} r 
    \,
    \mathrm{d} s 
    &= 
    \int_{t_{i-1}}^{t_i} 
    \int_{r}^{t_i} 
    \varphi(r,X_r)
    \,
    \mathrm{d}s 
    \,
    \mathrm{d}r\\
    & =
    \int_{t_{i-1}}^{t_i}
    (t_i-r)
    \varphi(r,X_r)
    \,
    \mathrm{d} r.
    \end{aligned}
\end{equation}
Plugging \eqref{eq-srk-H1} into the estimate of $\mathcal{H}_1$ and using \eqref{eq:inter-integration} imply
\begin{equation}
\label{eq:h1-est}
    \begin{aligned}
        \mathcal{H}_1
        &=
        \sum_{i=1}^{n}
        \int_{t_{i-1}}^{t_i}
        (t_i-r)
         \Big(\partial_t f_\beta (r,X_r)
                +
                D f_\beta (r,X_r) f_\beta (r,X_r)
            \Big) \mathrm{d} r    \\
        &\quad
                +\tfrac{{\beta}}{2}
                \sum_{i=1}^{n}
                \sum_{j=1}^{d}
                \int_{t_{i-1}}^{t_i}
                (t_i-r)
                D^2 f_\beta (r,X_{r})
                [ e_j,e_j ]
            \mathrm{d} r\\
        &\quad 
                +\sqrt{\beta}
                \sum_{i=1}^{n}                
                \int_{t_{i-1}}^{t_i}
                (t_i-r)   
                D f_\beta (r,X_r)
                 \mathrm{d} W_r.
    \end{aligned}
\end{equation}
For $\mathcal{H}_2$, recalling the Taylor expansion, we note that
\begin{equation}
    \label{eq-srk-H2}
    \begin{aligned}
& 
f_\beta (t_{i-1}+\tfrac{3}{4}h,H_{t_{i-1}})
-
f_\beta (t_{i-1},H_{t_{i-1}})\\
& \quad =   
\int_{t_{i-1}}^{t_{i-1}+\frac{3}{4}h}   
            \Big[
                \partial_t                                               
                f_\beta (t_{i-1},H_{t_{i-1}}) 
                \,
                + \int_{t_{i-1}}^{r}
                \partial_{tt} f_\beta (u,H_{t_{i-1}})
                 \,\mathrm{d} u
            \Big]
            \mathrm{d} r\\
&  \quad =
\tfrac{3}{4}
\partial_t               f_\beta (t_{i-1},H_{t_{i-1}}) 
h
+
\int_{t_{i-1}}^{t_{i-1}+\frac{3}{4}h}   
\int_{t_{i-1}}^{r}
\partial_{tt}
f_\beta (u,H_{t_{i-1}})
\,
\mathrm{d} u
\,
\mathrm{d} r,
\end{aligned}
\end{equation}
as thus 
\begin{equation}
\label{eq:h2-est}
\begin{aligned}
\mathcal{H}_2
& =
\sum_{i=1}^{n}
\int_{t_i}^{t_{i+1}}
\tfrac{h}{2}
\,
\partial_t               
f_\beta (t_{i-1},H_{t_{i-1}}) 
\dd r\\
& \quad +
\tfrac{2}{3}
\sum_{i=1}^{n}
\int_{t_i}^{t_{i+1}}
\int_{t_{i-1}}^{t_{i-1}+\frac{3}{4}h}   
\int_{t_{i-1}}^{r}
\partial_{tt} 
f_\beta (u,H_{t_{i-1}})
\,
\mathrm{d} u
\,
\mathrm{d} r
\,
\mathrm{d} s.
\end{aligned}
\end{equation}
By abuse of notation, we denote 
\begin{equation}
    \Delta X_i := H_{X_i}-X_{t_{i}} 
    = \tfrac{3}{4} f_\beta (t_{i}, X_{t_{i}})h 
    + \tfrac{3\sqrt{\beta}\Delta Z_i}{2h},
    \quad i \in [N]_0.
\end{equation}
Regarding $\mathcal{H}_3$, 
in the same manner, using the Taylor expansion yields
\begin{equation}
\label{eq-dx-f}
    \begin{aligned}
        & f_\beta (t_{i-1},H_{t_{i-1}})
-
f_\beta (t_{i-1},X_{t_{i-1}})  \\
        & \quad =
        D f_\beta (t_{i-1},X_{t_{i-1}})
        \bigg(
         \tfrac{3}{4} f_\beta (t_{i-1}, X_{t_{i-1}})h 
    + 
    \tfrac{3\sqrt{\beta}
    \Delta Z_{i-1}}{2h}
    \bigg)
        +
        \tfrac{1}{2}                
            \underbrace{     
                D^2 
                f_\beta (t_{i-1},X_{t_{i-1}})
                [\Delta X_{i-1},\Delta X_{i-1}] 
            }_{=: \mathcal{J}_1}\\
        & \quad \quad +
        \tfrac{1}{6}
        \int_{0}^{1}
                (1-u)^2
                D^3
                f_\beta (t_{i-1},X_{t_{i-1}}+u\Delta X_{i-1})
                \big[
                \Delta X_{i-1}, \Delta X_{i-1}, \Delta X_{i-1}
                \big]  \,
            \mathrm{d} u.     
    \end{aligned}
\end{equation}
For the error analysis of $\mathcal{J}_1$, we make a decomposition as follows:  
\begin{equation}
\label{eq:j1-decomposition}
    \begin{aligned}
        \mathcal{J}_1
        &= 
        D^2 f_\beta (t_{i-1},X_{t_{i-1}})
        [\Delta X_{i-1},\Delta X_{i-1}]
        -
        D^2 
        f_\beta (t_{i-1},X_{t_{i-1}})
        \big[
        \tfrac{3\sqrt{\beta}\Delta Z_{i-1}}{2h}, \tfrac{3\sqrt{\beta}\Delta Z_{i-1}}{2h}
        \big]\\
        &\quad +
        \tfrac{9{\beta}}{4h^2}
        \bigg(    
        \sum_{j=1}^{d}
        \tfrac{\partial^2 }
        {\partial x_j^2}
        f_\beta (t_{i-1},X_{t_{i-1}})
        (\Delta {Z_{i-1}^{(j)}})^2 
        +
        \sum_{j\neq k}^{d}
            \tfrac{\partial^2 }{\partial x_j\partial x_k}
            f_\beta (t_{i-1},X_{t_{i-1}})
            \Delta {Z_{i-1}^{(j)}}\Delta
            {Z_{i-1}^{(k)}}
        \bigg).
    \end{aligned}
\end{equation}
Combining \eqref{eq-dx-f} with \eqref{eq:j1-decomposition} yields
\begin{align}
\label{eq:h3-est}
\mathcal{H}_3
& =
\sum_{i=1}^{n}
\int_{t_i}^{t_{i+1}}
\tfrac{h}{2}
D f_\beta (t_{i-1},X_{t_{i-1}})
f_\beta (t_{i-1}, X_{t_{i-1}})
\dd r
+
\sqrt{\beta}
\sum_{i=1}^{n}
D f_\beta (t_{i-1},X_{t_{i-1}})
\Delta Z_{i-1}
\nonumber\\
& \quad +
\tfrac{1}{3}
\sum_{i=1}^{n}
\int_{t_i}^{t_{i+1}}
\Big[
D^2 f_\beta (t_{i-1},X_{t_{i-1}})
[\Delta X_{i-1},\Delta X_{i-1}]
-
D^2 
f_\beta (t_{i-1},X_{t_{i-1}})
\big[
\tfrac{3\sqrt{\beta}\Delta Z_{i-1}}
{2h}, \tfrac{3\sqrt{\beta}\Delta Z_{i-1}}
{2h}
\big]
\Big]
\dd r
\nonumber\\
& \quad +
\tfrac{3{\beta}}
{4h^2}
\sum_{i=1}^{n} 
\int_{t_i}^{t_{i+1}}
\sum_{j=1}^{d}
\tfrac{\partial^2 }
{\partial x_j^2}
f_\beta (t_{i-1},X_{t_{i-1}})
(\Delta {Z_{i-1}^{(j)}})^2
\dd r 
\nonumber\\
& \quad +
\tfrac{3\beta}{4h^2}
\sum_{i=1}^{n}
\int_{t_i}^{t_{i+1}}
\sum_{j\neq k}^{d}
\tfrac{\partial^2 }{\partial x_j\partial x_k}
f_\beta (t_{i-1},X_{t_{i-1}})
\Delta {Z_{i-1}^{(j)}}\Delta
{Z_{i-1}^{(k)}}
\,
\dd r
\nonumber\\
& \quad +
\tfrac{1}{9}
\sum_{i=1}^{n} \int_{t_{i-1}}^{t_i}     \int_{0}^{1}
(1-u)^2
D^3
f_\beta (t_{i-1},X_{t_{i-1}}+u\Delta X_{i-1})
[\Delta X_{i-1}, 
\Delta X_{i-1}, 
\Delta X_{i-1}]  
\,
\mathrm{d} u 
\,
\mathrm{d} r .
\end{align}
Inserting \eqref{eq:h1-est}, \eqref{eq:h2-est} and \eqref{eq:h3-est} into \eqref{eq-srk-sum-Ri} and rearranging it give
\begin{equation}
\label{eq-srk-sum-Ri-re}
    \sum_{i=1}^{n}R_{i} 
    = 
    \mathcal{I}_1+
    \mathcal{I}_2+
    \mathcal{I}_3+
    \mathcal{I}_4+
    \mathcal{I}_5+
    \mathcal{I}_6,
\end{equation}
where
\begin{align}
\mathcal{I}_1
& :=
    \sum_{i=1}^{n}
    \int_{t_{i-1}}^{t_i}
    \Big[        
    (t_i-r)
    \partial_t f_\beta (r,X_r) 
    -
    \tfrac{h}{2}
    \partial_t 
    f_\beta (t_{i-1},H_{t_{i-1}})     
    \Big] 
    \mathrm{d} r,
    \nonumber\\
\mathcal{I}_2
& :=
    \sum_{i=1}^{n}
    \int_{t_{i-1}}^{t_i}
    \Big[ 
    (t_i-r)
    D f_\beta (r,X_r)
    f_\beta (r,X_r) 
    -
    \tfrac{h}{2}
    D  f_\beta (t_{i-1},X_{t_{i-1}})
    f_\beta (t_{i-1},X_{t_{i-1}})     
    \Big]
    \mathrm{d} r,
    \nonumber\\
\mathcal{I}_3
& :=
    \tfrac{{\beta}}{2}
    \sum_{i=1}^{n}
    \sum_{j=1}^{d}
    \int_{t_{i-1}}^{t_i}
    \Big[  
    (t_i-r)
    \tfrac{\partial^2 }{\partial x_j^2}
    f_\beta (r,X_{r})
    -
    \tfrac{3(\Delta {Z_{i-1}^{(j)}})^2}{2h^2}
    \tfrac{\partial^2 }{\partial x_j^2}   
    f_\beta (t_{i-1},X_{t_{i-1}})
    \Big]
    \mathrm{d} r,
    \nonumber\\
\mathcal{I}_4
& :=
    \sqrt{\beta}
    \sum_{i=1}^{n}           \bigg( 
    \int_{t_{i-1}}^{t_i}
    (t_i-r)
    D f_\beta (r,X_r)
    \mathrm{d} W_r
    -
    D f_\beta (t_{i-1},X_{t_{i-1}}) 
    \Delta Z_{i-1}           \bigg) ,
    \nonumber\\
\mathcal{I}_5
& :=
    -
    \tfrac{2}{3}
    \sum_{i=1}^{n}
    \int_{t_{i-1}}^{t_i}
    \int_{t_{i-1}}^{t_{i-1}+\frac{3}{4}h}
    \int_{t_{i-1}}^{r}
    \partial_{tt}            f_\beta (u,H_{t_{i-1}}) 
    \dd u
    \,
    \dd r
    \,
    \dd s,
    \nonumber\\
\mathcal{I}_6
& :=
    -\tfrac{1}{3}
    \sum_{i=1}^{n} 
    \int_{t_{i-1}}^{t_i} 
    \Big[
    D^2 
    f_\beta (t_{i-1},X_{t_{i-1}})
    [\Delta X_{i-1}, \Delta X_{i-1}]
    -
    D^2 
    f_\beta (t_{i-1},X_{t_{i-1}})
    [\tfrac{3\sqrt{\beta}
    \Delta Z_{i-1}}
    {2h}, \tfrac{3\sqrt{\beta}
    \Delta Z_{i-1}}{2h}]
    \Big]
    \mathrm{d} s
    \nonumber\\
& \quad   
        -\tfrac{3{\beta}}{4h^2}
        \sum_{i=1}^{n}             
        \int_{t_{i-1}}^{t_i}                    
        \sum_{j\neq k}^{d}
        \tfrac{\partial^2 }{\partial x_j\partial x_k}
        f_\beta (t_{i-1},X_{t_{i-1}})
        \Delta {Z_{i-1}^{(j)}}\Delta {Z_{i-1}^{(k)}}
    \mathrm{d} r          
    \nonumber  \\
&\quad 
    -\tfrac{1}{9}
    \sum_{i=1}^{n} 
    \int_{t_{i-1}}^{t_i}                 
    \int_{0}^{1}
    (1-u)^2
    D^3
    f_\beta (t_{i-1},X_{t_{i-1}}+u\Delta X_i)
    [\Delta X_{i-1}, \Delta X_{i-1}, \Delta X_{i-1}]  
    \mathrm{d} u 
    \,
    \mathrm{d} r .
\end{align}
Squaring both sides of \eqref{eq-srk-sum-Ri-re}, taking expectations and using the inequality $(\sum_{i=1}^k u_i )^q \leq k^{q-1} \sum_{i=1}^k u_i^q$, $q\geq 1$, $u_i\in \R$,
we arrive at
\begin{align*}
    \mathbb{E}
    \Big[
    \big\|
    \sum_{i=1}^{n}R_{i}
    \big\|^2 
    \Big]
    \leq 
    6         
    \Big(
    \mathbb{E} 
    \big[
    \|\mathcal{I}_1\|^2
    \big]
    +
    \mathbb{E} 
    \big[
    \|\mathcal{I}_2\|^2
    \big]
    +
    \mathbb{E} 
    \big[
    \|\mathcal{I}_3\|^2
    \big]
    +
    \mathbb{E} 
    \big[
    \|\mathcal{I}_4\|^2
    \big]
    +
    \mathbb{E} 
    \big[
    \|\mathcal{I}_5\|^2
    \big]
    +
    \mathbb{E} 
    \big[
    \|\mathcal{I}_6\|^2
    \big]
    \Big).
\end{align*}
In the following, we bound these items separately.
With regard to the first term, we make a further decomposition:
\begin{equation}
\begin{aligned}
    \mathbb{E} 
    \big[
    \|\mathcal{I}_1\|^2
    \big]
    & \leq
    \underbrace{
    3\,
    \mathbb{E}
    \bigg[
    \Big\|
    \sum_{i=1}^{n}
    \int_{t_{i-1}}^{t_i}
    \big[        
    (t_i-r)
    \big(
    \partial_t f_\beta(r,X_{r}) 
    -
    \partial_t 
    f_\beta (t_{i-1},X_{t_{i-1}}) 
    \big)
    \big]
    \mathrm{d} r
    \Big\|^2
    \bigg]
    }_{=:\mathcal{I}_{1,1}}\\
    & \quad +
    \underbrace{
    3\,
    \mathbb{E}
    \bigg[
    \Big\|
    \sum_{i=1}^{n}
    \int_{t_{i-1}}^{t_i}
    \big[        
    (t_i-r)
    \partial_t f_\beta (t_{i-1},X_{t_{i-1}}) 
    -
    \tfrac{h}{2}
    \partial_t 
    f_\beta (t_{i-1},X_{t_{i-1}})     
    \big] 
    \mathrm{d} r 
    \Big\|^2
    \bigg]}_{=:\mathcal{I}_{1,2}}\\
    & \quad +
    \underbrace{
    \tfrac{3h^2}{4}
    \mathbb{E}
    \bigg[
    \Big\|
    \sum_{i=1}^{n}
    \int_{t_{i-1}}^{t_i}
    \big[        
    \partial_t f_\beta (t_{i-1},X_{t_{i-1}}) 
    -
    \partial_t 
    f_\beta (t_{i-1},H_{t_{i-1}})     
    \big] 
    \mathrm{d} r 
    \Big\|^2
    \bigg]}_{=:\mathcal{I}_{1,3}}.
\end{aligned}
\end{equation}
Again, applying the It\^o formula to $\partial_t f_{\beta}(t,x)$ shows
\begin{equation}
\begin{aligned}
& 
\partial_t f_{\beta}(r,X_r)
-
\partial_t f_{\beta}(t_{i-1},X_{t_{i-1}})\\
& = 
\int_{t_{i-1}}^{r}
\Big[
\partial_{tt} f_\beta (u,X_u)
+
D \partial_t 
f_{\beta} (u,X_u) 
f_\beta (u,X_u)
+ 
\tfrac{\beta}
{2}
\sum_{j=1}^{d}
D^2 \partial_t f_{\beta}(u,X_{u})
\left[ e_j,e_j \right]   \Big]
\mathrm{d} u \\
& \quad \quad
+ 
\sqrt{\beta}
\int_{t_{i-1}}^{r}
D \partial_t f_{\beta} (u,X_u)
\mathrm{d} W_u.
\end{aligned}
\end{equation}
Inserting this into the estimate of 
$\mathcal{I}_{1,1}$ results in
\begin{align}
\label{eq:I11-decompositon}
    \mathcal{I}_{1,1}
    & \leq
    9\,
    \mathbb{E}
    \bigg[
    \Big\|
    \sum_{i=1}^{n}
    \int_{t_{i-1}}^{t_i}
    \int_{t_{i-1}}^{r}
    (t_i-r)
    \partial_{tt}
    f_\beta (u,X_u) 
    \,
    \mathrm{d}u 
    \,
    \mathrm{d}r 
    \Big\|^2
    \bigg]
    \nonumber\\
    & \quad +
    9\,
    \mathbb{E}
    \bigg[
    \Big\|
    \sum_{i=1}^{n}
    \int_{t_{i-1}}^{t_i}
    \int_{t_{i-1}}^{r}
    \big[
     (t_i-r)
     \big(
     D \partial_{t} f_\beta (u,X_u)
     f_\beta (u,X_u) 
     \nonumber\\
    & \qquad \qquad 
    +
    \tfrac{\beta}
    {2}
    \sum_{j=1}^{d}
    D^2
    \partial_{t}
    f_\beta (u,X_u)
    [e_j,e_j]
    \big) 
    \big]
    \mathrm{d}u 
    \,
    \mathrm{d}r 
    \Big\|^2
    \bigg]\nonumber\\
    & \quad +
    9\,
    \mathbb{E}
    \bigg[
    \Big\|
    \sum_{i=1}^{n}
    \int_{t_{i-1}}^{t_i}
    \int_{t_{i-1}}^{r}
    (t_i-r)
    D \partial_{t} f_\beta (u,X_u)
    \mathrm{d} W_u 
    \,
    \mathrm{d}r 
    \Big\|^2
    \bigg]
    \nonumber\\
    & =:
\mathcal{I}^{(1)}_{1,1}
    +
\mathcal{I}^{(2)}_{1,1}
    +
\mathcal{I}^{(3)}_{1,1}.
\end{align}

\noindent
Using \eqref{eq:inter-integration}, the Minkowski inequality, the H\"older inequality and Assumption \ref{ass-f-3continuous}, 
we treat $\mathcal{I}^{(1)}_{1,1}$ as follows:
\begin{equation}
\begin{aligned}
\mathcal{I}^{(1)}_{1,1}
& =
9
\,
\Big\|
\sum_{i=1}^{n}
\int_{t_{i-1}}^{t_i}
\int_{u}^{t_i}
(t_i-r)
\cdot 
\partial_{tt}
f_\beta (u,X_u)
\mathrm{d}r 
\,
\mathrm{d}u
\Big\|^2_{L^2(\Omega;\mathbb{R}^d)} \\
& \leq 
\tfrac{9}{4}
\bigg(
\sum_{i=1}^{N}
\int_{t_{i-1}}^{t_i}
(t_i-u)^2
\big\|
\partial_{tt}
f_\beta (u,X_u)
\big\|_{L^2(\Omega;\mathbb{R}^d)}
\,
\mathrm{d} u             
\bigg)^2\\
&\leq
\tfrac{9}{4}
\bigg(
\tilde{L}_f 
d h^{3/2}
\sum_{i=1}^{N-1}
\int_{t_{i-1}}^{t_i} 
\tfrac{1}{1-u}  
\mathrm{d} u 
+
\tilde{L}_f 
d
h
\int_{1-h}^{1} 
\tfrac{1}{\sqrt{1-u}}  
\mathrm{d} u
\bigg)^2\\
&\leq
C d^2 h^3 |\ln h|^2,
\end{aligned}
\end{equation}
where it is straightforward to show
\[
\sum_{i=1}^{N-1}
\int_{t_{i-1}}^{t_i} 
\tfrac{1}{1-u}  
\mathrm{d} u 
=
\int_h^1
\tfrac{1}{s}
\,
\mathrm{d} s
=
| \ln h|, 
\quad \quad 
\int_{1-h}^{1} 
\tfrac{1}{\sqrt{1-u}}  
\mathrm{d} u
=
\int_0^h
\tfrac{1}{\sqrt{s}}
\,
\mathrm{d} s
=
2 h^{\frac{1}{2}}.
\]

\noindent
Regarding $\mathcal{I}^{(2)}_{1,2}$, by the H\"older inequality and Assumption \ref{ass-f-3continuous}, we deduce
\begin{equation}
\begin{aligned}
    \label{eq-srk-I12}   
\mathcal{I}^{(2)}_{1,2}
&\leq
     18
     \,
     \mathbb{E}
     \bigg[
     \Big\|
     \sum_{i=1}^{n}
     \int_{t_{i-1}}^{t_i}
     \int_{t_{i-1}}^{r}
     (t_i-r)
    D \partial_{t} f_\beta (u,X_u)
    f_\beta (u,X_u) 
    \mathrm{d}u 
    \,
    \mathrm{d}r 
    \Big\|^2 
    \bigg]\\
    &\quad
            + 
    \tfrac{9 \beta^2}{2}
    \mathbb{E}
    \bigg[
    \Big\|
    \sum_{i=1}^{n}
    \int_{t_{i-1}}^{t_i}
    \int_{t_{i-1}}^{r}
   (t_i-r)
    \sum_{j=1}^{d}
    D^2\partial_{t}
    f_\beta (u,X_u)
    [e_j,e_j]
    \mathrm{d}u
    \,
    \mathrm{d}r 
    \Big\|^2 
    \bigg]\\
&\leq 
18h
\sum_{i=1}^{n}
\int_{t_{i-1}}^{t_i} 
\int_{t_{i-1}}^{r}    
(t_i-r)^2
\,
\mathbb{E}
\Big[
\big\|
D \partial_t
f_\beta (u,X_{u})
f_\beta (u,X_u)
\big\|^2 
            \Big]
            \mathrm{d}u 
            \,
            \mathrm{d}r
\\
&\quad 
            + \tfrac{9\beta^2dh}{2}
       \sum_{i=1}^{n}
       \sum_{j=1}^{d}
            \int_{t_{i-1}}^{t_i} 
            \int_{t_{i-1}}^{r}
            (t_i-r)^2
            \,
            \mathbb{E}    \Big[      
            \big\|
            D^2 \partial_t
            f_\beta (r,X_{r})
            [e_j,e_j]
            \big\|^2
            \Big]
            \mathrm{d}u
            \,
            \mathrm{d}r     
\\
& \leq 
\big(
4(\hat{L}_{f}^2 d+L_f^2 M_1 d)+ \beta^2 d^2
\big)
\tfrac{9 \tilde{L}^2_f
d h } {2}
\sum_{i=1}^{n}
\int_{t_{i-1}}^{t_i} 
\int_{t_{i-1}}^{r}
\tfrac{(t_i-r)^2}{1-u}
\mathrm{d}u
\mathrm{d}r \\
& \leq 
C d^3 h^3,
\end{aligned}
\end{equation}
where we used the fact
\begin{equation*}
\sum_{i=1}^{n}
\int_{t_{i-1}}^{t_i} 
\int_{t_{i-1}}^{r}
\tfrac{(t_i-r)^2}{1-u}
\mathrm{d}u
\,
\mathrm{d}r
\leq 
\sum_{i=1}^{N}
\int_{t_{i-1}}^{t_i}
\tfrac{(t_i-u)^3}{3({1-u})}
\mathrm{d}u
\leq 
h^2.
\end{equation*}
Next we cope with $\mathcal{I}^{(3)}_{1,1}$. First, we have 
\begin{equation}
\begin{aligned}
\mathcal{I}^{(3)}_{1,1}
& =
\underbrace{
9\,
\sum_{i=1}^{n}
    \mathbb{E}
    \bigg[
    \Big\|
    \int_{t_{i-1}}^{t_i}
    \int_{t_{i-1}}^{r}
    (t_i-r)
    D \partial_{t} f_\beta (u,X_u)
    \mathrm{d} W_u 
    \,
    \mathrm{d}r 
    \Big\|^2
    \bigg]}_{=:T_1}\\
& \quad +
18 \,
\sum_{1 \leq i < j \leq n}
\underbrace{
\mathbb{E}
\bigg[
\Big \langle
    \int_{t_{i-1}}^{t_i}
    \int_{t_{i-1}}^{r}
    (t_i-r)
    D \partial_{t} f_\beta (u,X_u)
    \mathrm{d} W_u 
    \,
    \mathrm{d}r 
,
\int_{t_{j-1}}^{t_j}
    \int_{t_{j-1}}^{r}
    (t_j-r)
    D \partial_{t} f_\beta (u,X_u)
    \mathrm{d} W_u 
    \,
    \mathrm{d}r
    \Big \rangle
\bigg]}_{=:T_2}.
\end{aligned}
\end{equation}
For any $1\leq i < j \leq n$, we show that the second term vanishes: 
\begin{equation}
\begin{aligned}
T_2 
& =
\E
\bigg[
\E
\bigg[
\Big \langle
    \int_{t_{i-1}}^{t_i}
    \int_{t_{i-1}}^{r}
    (t_i-r)
    D \partial_{t} f_\beta (u,X_u)
    \mathrm{d} W_u 
    \,
    \mathrm{d}r 
,
\int_{t_{j-1}}^{t_j}
    \int_{t_{j-1}}^{r}
    (t_j-r)
    D \partial_{t} f_\beta (u,X_u)
    \mathrm{d} W_u 
    \,
    \mathrm{d}r
\bigg|
\mathcal{F}^W_{t_{j-1}}
\bigg]
\bigg]\\
& \leq
\E
\bigg[
\Big \langle
    \int_{t_{i-1}}^{t_i}
    \int_{t_{i-1}}^{r}
    (t_i-r)
    D \partial_{t} f_\beta (u,X_u)
    \mathrm{d} W_u 
    \,
    \mathrm{d}r 
,
\int_{t_{j-1}}^{t_j}
\mathbb{E}
\bigg[
\int_{t_{j-1}}^{r}
(t_j-r)
    D \partial_{t} f_\beta (u,X_u)
\mathrm{\,d} W_u
\Big|
\mathcal{F}^W_{t_{j-1}}
\bigg]
\mathrm{\,d} r
\Big \rangle
\bigg]\\
& =0,
\end{aligned}
\end{equation}
where we used the fact that the first integral is $\mathcal{F}^W_{t_{j-1}}$-measurable and applied the basic property of the It\^o integral. 
As a result, we apply  the H\"older inequality, the It\^o Isometry \cite[Lemma 5.4]{mao2007stochastic}, Assumption \ref{ass-f-3continuous} and Lemma \ref{lem:moments-bounded} to show

\begin{equation}
    \begin{aligned}
        \label{eq-srk-I13}
\mathcal{I}^{(3)}_{1,1}
=
T_1
& \leq
9 h
\sum_{i=1}^{n}
\mathbb{E}
\bigg[
\int_{t_{i-1}}^{t_i} 
\Big\|
\int_{t_{i-1}}^{r}
(t_i-r)
D \partial_t
f_\beta (u,X_{u})  
\,
\mathrm{d} W_u
\Big\|^2
\mathrm{d} r 
\bigg]\\
&=
9 h
\sum_{i=1}^{n}
\mathbb{E}
\bigg[
\int_{t_{i-1}}^{t_i} 
\int_{t_{i-1}}^{r}
(t_i-r)^2 
\big\|
D \partial_t
f_\beta (u,X_{u})  
\big\|^2_{\mathrm{F}}
\,
\mathrm{d}u
\,
\mathrm{d} r 
\bigg]\\
&\leq
9 \tilde{L}_f^2 d^2 h
\sum_{i=1}^{n}
\int_{t_{i-1}}^{t_i} 
\int_{t_{i-1}}^{r}
\tfrac{(t_i-r)^2}{1-u}
\mathrm{d} u
\,
\mathrm{d} r\\
& \leq
C d^2 h^{3}.
\end{aligned}
\end{equation}
Combining these estimates with \eqref{eq:I11-decompositon} gives
\begin{equation}
\mathcal{I}_{1,1}
    \leq
    C d^3 h^3 |\ln h|^2.
\end{equation}
Noting that
\[
\int_{t_{i-1}}^{t_i} \big((t_i-r)-\tfrac{h}{2}\big)\mathrm{d} r=0,
\]
we can easily get $\mathcal{I}_{1,2}=0.$
By the Minkowski inequality, the Taylor expansion, the H\"older inequality and Assumption \ref{ass-f-3continuous}, we estimate $\mathcal{I}_{1,3}$ as follows:
\begin{equation}
\begin{aligned}
\mathcal{I}_{1,3}
& =
\tfrac{3h^2}{4}
    \Big\|
    \sum_{i=1}^{n}
    \int_{t_{i-1}}^{t_i}
    \int_0^1
    D  \partial_t f_\beta (t_{i-1},X_{t_{i-1}}+u \Delta X_{i-1}) 
    \Delta X_{i-1}
    \mathrm{d} u
    \,
    \mathrm{d} r 
    \Big\|^2
    _{L^2(\Omega;\mathbb{R}^d)}\\
    & \leq
    \tfrac{3h^2}{4}
    \bigg(
    \sum_{i=1}^{n}
    \int_{t_{i-1}}^{t_i}
    \int_0^1
    \big\|
    D  \partial_t f_\beta (t_{i-1},X_{t_{i-1}}+u \Delta X_{i-1}) 
    \Delta X_{i-1}
    \big\|
    _{L^2(\Omega;\mathbb{R}^d)}
    \mathrm{d} u
    \,
    \mathrm{d} r 
    \bigg)^2\\
    & \leq
    \tfrac{3 h^2}
    {4}
    \bigg(
    \tilde{L}_f d^{1/2}
    \sum_{i=1}^{n}
    \int_{t_{i-1}}^{t_i}
    \tfrac{1}{\sqrt{1-t_{i-1}}} 
    \mathrm{d} r 
    \sup_{i\in [N]}
    \big\{\|
    \Delta X_{i-1}
    \|_{L^2(\Omega;\mathbb{R}^d)}
    \big\}
    \bigg)^2\\
    & \leq
    C d^2 h^3,
\end{aligned}
\end{equation}
where the fact was used that
\begin{equation}
    \begin{aligned}
        \|\Delta X_{i-1}\|_{L^2(\Omega;\mathbb{R}^d)}
    & =
        \Big\|
        \tfrac{3}{4} f_\beta (t_{i}, X_{t_{i}})h 
            + \tfrac{3\sqrt{\beta}\Delta Z_i}{2h}
        \Big\|
        _{L^2(\Omega;\mathbb{R}^d)}\\
    & \leq 
        \tfrac{3h}{4}
        \big\| 
        f_\beta (t_{i}, X_{t_{i}})
        \big\|
        _{L^2(\Omega;\mathbb{R}^d)}
        +
        \tfrac{3\sqrt{\beta}}{2h}
        \bigg\|
        \int_{t_{i-1}}^{t_i}
        ({t_{i}}-r)
        \dd W_r
        \bigg\|_{L^2(\Omega;\mathbb{R}^d)} \\
    &\leq
    \Big(
    \tfrac{3}{4}
    \tilde{L}_f
    (1+\sqrt{M_1})
    +
    \tfrac{\sqrt{3\beta}}{4}
    \Big)
    d^{\frac{1}{2}} h^{\frac{1}{2}}.
    \end{aligned}
\end{equation}
Equipped with the above estimates,  we conclude 
\begin{equation}
    \mathbb{E} 
    \big[
    \|\mathcal{I}_1\|^2
    \big]
    \leq 
    C d^3 h^3
    |\ln h|^2.
\end{equation}
In order to handle $\mathcal{I}_2$,  
we begin with the following decomposition:
\begin{equation}
\label{eq:Dff-decom}
    \begin{aligned}
                D f_\beta (r,X_r) 
                f_\beta (r,X_r)
            &= 
            Df_\beta (t_{i-1},X_{t_{i-1}})
            f_\beta (t_{i-1},X_{t_{i-1}}) 
            \\
            &\quad 
            +
            \big(
                D f_\beta (r,X_r)-Df_\beta (t_{i-1},X_{t_{i-1}}) 
            \big)
                f_\beta (r,X_r)
             \\
            &\quad 
                +Df_\beta (t_{i-1},X_{t_{i-1}})
            \big(
                f_\beta (r,X_r)
                -
                f_\beta (t_{i-1},X_{t_{i-1}})
            \big).
    \end{aligned}
\end{equation}
In view of \eqref{eq:Dff-decom}, we obtain 
\begin{equation}
\begin{aligned}
\mathbb{E} 
\big[
\|\mathcal{I}_2\|^2
\big]
 &\leq
 \underbrace{
 3
 \,
 \mathbb{E}
 \bigg[
 \Big\|
\sum_{i=1}^{n}
\int_{t_{i-1}}^{t_i}
\big(
(t_i-r) 
 -
 \tfrac{h}{2}
 \big)
        D 
        f_\beta (t_{i-1},X_{t_{i-1}})
        f_\beta (t_{i-1},X_{t_{i-1}})
        \mathrm{d} r
        \Big\|^2
\bigg]}_{=:\mathcal{I}_{2,1}}\\
        &\quad +
         \underbrace{
         3\mathbb{E} 
        \bigg[
 \Big\|
 \sum_{i=1}^{n}
        \int_{t_{i-1}}^{t_i}
        (t_i-r)
        \big(
            Df_\beta (r,X_r)
            -Df_\beta (t_{i-1},X_{t_{i-1}})
        \big)
            f_\beta (t_{i-1},X_{t_{i-1}})
            \mathrm{d} r
            \Big\|^2
            \bigg] }_{=:\mathcal{I}_{2,2}}\\
        &\quad +
    \underbrace{
        3 \mathbb{E}
        \bigg[
 \Big\|
 \sum_{i=1}^{n}
                \int_{t_{i-1}}^{t_i}
                (t_i-r)
                Df_\beta (t_{i-1},X_{t_{i-1}})
            \big(
                f_\beta (r,X_r)
                -
                f_\beta (t_{i-1},X_{t_{i-1}})
            \big)
            \mathrm{d} r
            \Big\|^2
            \bigg]}_{=:\mathcal{I}_{2,3}}. 
\end{aligned}
\end{equation}
In the following we cope with the above three items separately. 
Noting that
\[
\int_{t_{i-1}}^{t_i} \big((t_i-r)-\tfrac{h}{2}\big)\mathrm{d} r=0,
\]
one can easily see $\mathcal{I}_{2,1}=0.$
With regard to $\mathcal{I}_{2,2}$, we utilize the inequality $(\sum_{i=1}^k u_i )^q \leq k^{q-1} \sum_{i=1}^k u_i^q$, $q\geq 1$, $u_i\in \R$, the H\"older inequality, \eqref{eq:Df-D2f-x-lip-t-1/2} and Lemma \ref{lem:moments-bounded} to obtain
\begin{equation}
\label{eq:I22-est}
\begin{aligned}
\mathcal{I}_{2,2}
& \leq
3
h
\sum_{i=1}^{n}
\mathbb{E} 
\bigg[
\Big\|
\int_{t_{i-1}}^{t_i} 
\big(
Df_\beta (r,X_r)
-Df_\beta (t_{i-1},X_{t_{i-1}})
\big)
f_\beta (t_{i-1},X_{t_{i-1}})
\mathrm{d} r
\Big\|^2
\bigg] \\
& \leq
6
\hat{L}^2_f
h^2
\sum_{i=1}^{n}
\mathbb{E}
\bigg[
\int_{t_{i-1}}^{t_i}
\big(
\|X_r-X_{t_{i-1}}\|^2
+
d |r-t_{i-1}|
\big)
\cdot
\|f_\beta (t_{i-1},X_{t_{i-1}})
\|^2
\mathrm{d} r
\bigg]\\
& \leq 
6
\hat{L}^2_f
h^2
\sum_{i=1}^{n}
\int_{t_{i-1}}^{t_i}
\|X_r-X_{t_{i-1}}\|^2_{L^4(\Omega;\mathbb{R}^d)}
\cdot
\|f_\beta (t_{i-1},X_{t_{i-1}})
\|^2_{L^4(\Omega;\mathbb{R}^d)}
\mathrm{d} r
\bigg]\\
& \quad +
6
\hat{L}^2_f
d h^2
\sum_{i=1}^{n}
\int_{t_{i-1}}^{t_i}
|r-t_{i-1}|
\cdot
\|f_\beta (t_{i-1},X_{t_{i-1}})
\|^2_{L^2(\Omega;\mathbb{R}^d)}
\mathrm{d} r
\bigg]\\
& \leq
C d^2 h^3.
\end{aligned}
\end{equation}
For $\mathcal{I}_{2,3}$, in the same manner, using the inequality $(\sum_{i=1}^k u_i )^q \leq k^{q-1} \sum_{i=1}^k u_i^q$, $q\geq 1$, $u_i\in \R$, the H\"older inequality, Assumption \ref{ass-f-3continuous}, the Lipschitz condition in $x$ and $\tfrac{1}{2}$-H\"older continuous in $t$ of $f_{\beta}(t,x)$ \eqref{eq:b-lipschitz-in-x-houder-in-t} and Lemma \ref{lem:moments-bounded} to arrive at 
\begin{equation}
\begin{aligned}
\mathcal{I}_{2,3}
& \leq
    3 h
    \sum_{i=1}^{n}
    \mathbb{E}
    \bigg[
    \Big\|
    \int_{t_{i-1}}^{t_i}
    Df_\beta (t_{i-1},X_{t_{i-1}})
    \big[
    f_\beta (r,X_r)
    -
    f_\beta (t_{i-1},X_{t_{i-1}})
    \big]
    \mathrm{d} r
    \Big\|^2
    \bigg]\\
& \leq
    6 
    L^2_f\hat{L}^2_f 
    h^2
    \sum_{i=1}^{n}
    \mathbb{E}
    \bigg[
    \int_{t_{i-1}}^{t_i}
    \big(
    \|
    X_r-X_{t_{i-1}}\|^2
    +
    d
    \,
    |r-t_{i-1}|
    \big)
    \mathrm{d} r
    \bigg]\\
& \leq
    C d h^3.
\end{aligned}
\end{equation}
Gathering the above estimates, we have
\begin{equation}
\mathcal{I}_2 
\leq
C d^2 h^3.
\end{equation}

\noindent
Before treating $\mathcal{I}_3$, we first note
\begin{equation}
\label{eq:h4}
    \mathbb{E}
    \Big[
    \big\|
    \tfrac{h^2}{2} 
    -
    \tfrac{3(\Delta {Z_{i-1}^{(j)}})^2}{2h} \|^2
    \Big]
    =
    \tfrac{h^4}{2}, 
\end{equation}
and make the following decomposition:
\begin{equation}
\begin{aligned}
\mathbb{E} 
\big[
\|\mathcal{I}_3\|^2
\big]
& \leq
\underbrace{
\tfrac{\beta}{2}
\mathbb{E}
\bigg[
\Big\|
\sum_{j=1}^{d}
\sum_{i=1}^{n}
\int_{t_{i-1}}^{t_i}
(t_i-r)
\big(
D^2 f_{\beta} 
(r,X_r)[e_j,e_j]
-
D^2 f_{\beta}
(t_{i-1},X_{t_{i-1}})
[e_j,e_j]   
\big)
\mathrm{d} r
\Big\|^2
\bigg]}_{=:\mathcal{I}_{3,1}}\\
& \quad +
\underbrace{
\tfrac{\beta}{2}
\mathbb{E}
\bigg[
\Big\|
\sum_{j=1}^{d}
\sum_{i=1}^{n}
\int_{t_{i-1}}^{t_i}     \Big(
(t_i-r)
-
\tfrac{3(\Delta {Z_{i-1}^{(j)}})^2}
{2h^2}
\Big)
D^2 f_{\beta}
(t_{i-1},X_{t_{i-1}})
[e_j,e_j]
\mathrm{d} r
\Big\|^2  
\bigg]}_{=:\mathcal{I}_{3,2}}.
\end{aligned}
\end{equation}
Using the same technique as before in treating $\mathcal{I}_{2,2}$, and applying the H\"older inequality, \eqref{eq:Df-D2f-x-lip-t-1/2} and Lemma \ref{lem:moments-bounded}, we directly get 
\[
\mathcal{I}_{3,1} \leq C d^2 h^3 .
\]
Noting that 
\[
\tfrac{3}{2h}
\mathbb{E} 
\big[ 
(\Delta {Z_{i-1}^{(j)}})^2
\big]
=
\tfrac{3}{2h}
\int_{t_{i-1}}^{t_i}
(t_i-r)^2
\dd r
=
\tfrac{h^2}{2},
\]
following the same arguments as used in the estimate of $\mathcal{I}^{(3)}_{1,1}$, and using the basic property of conditional expectation, we easily show that  the cross term vanishes.
As a result, we employ
Assumption \ref{ass-f-3continuous} and \eqref{eq:h4} to derive
\begin{equation}
\begin{aligned}
\mathcal{I}_{3,2} 
& =
\tfrac{\beta}{2}
\sum_{i=1}^{n}
\mathbb{E}
\bigg[
\Big\|
\sum_{j=1}^{d}
\int_{t_{i-1}}^{t_i}     \Big(
(t_i-r)
-
\tfrac{3(\Delta {Z_{i-1}^{(j)}})^2}
{2h^2}
\Big)
D^2 f_{\beta}
(t_{i-1},X_{t_{i-1}})
[e_j,e_j]
\mathrm{d} r
\Big\|^2  
\bigg]\\
& \leq
\tfrac{\beta d}{2}
\sum_{i=1}^{n}
\sum_{j=1}^{d}
\mathbb{E}
\Big[
\|
D^2 f_{\beta}
(t_{i-1},X_{t_{i-1}})
[e_j,e_j]
\|^2 
\,
\E 
\big[
\big\|
\tfrac{h^2}{2} -\tfrac{3(\Delta {Z_{i-1}^{(j)}})^2}{2h} 
\big\|^2
\big|
\mathcal{F}_{t_{i-1}}
\big]
\Big]\\
& \leq
 C d^2 h^3.
\end{aligned}
\end{equation}
By repeating a similar argument in treating $T_2$ and using Assumption \ref{ass-f-3continuous} and the H\"older inequality, it is straightforward to show that
\begin{equation}
    \begin{aligned}
\mathbb{E} 
    \big[
    \|\mathcal{I}_4\|^2
    \big]
    & = 
                \beta\,
    \E
    \bigg[
            \Big\|
                \sum_{i=1}^{n}       
                \int_{t_{i-1}}^{t_i}
                \big[(t_i-r)
            \big(
                D f_\beta (r,X_r)                
                -D 
                f_\beta (t_{i-1},X_{t_{i-1}})          
            \big)
                \big]
                \mathrm{d} W_r
                \Big\|^2
                \bigg]\\
        & =
        \beta \,
        \sum_{i=1}^{n}   
        \E
    \bigg[
            \Big\|
                \int_{t_{i-1}}^{t_i}
                \big[(t_i-r)
            \big(
                D f_\beta (r,X_r)                
                -D 
                f_\beta (t_{i-1},X_{t_{i-1}})          
            \big)
                \big]
                \mathrm{d} W_r
                \Big\|^2
                \bigg]\\
           & \leq
           \beta h^2 
           \sum_{i=1}^{n}
           \E
           \bigg[
           \int_{t_{i-1}}^{t_i}
           \big\|
        D f_\beta (r,X_r)
        -D f_\beta (t_{i-1},X_{t_{i-1}})            
            \big\|^2_{\mathrm{F}}
            \,
            \mathrm{d} r
            \bigg]\\
        &\leq
        C d^2 h^3
    \end{aligned}
\end{equation}
and 
\begin{equation}
\begin{aligned}
\mathbb{E} 
\big[
\|\mathcal{I}_5\|^2
\big]
& =
\tfrac{2}{3}
\,
\Big\|
\sum_{i=1}^{n}
\int_{t_{i-1}}^{t_i}
\int_{t_{i-1}}^{t_{i-1}+\frac{3}{4}h}
\int_{t_{i-1}}^{r}
\partial_{tt}            f_\beta (u,H_{t_{i-1}})
\,
\dd u
\,
\dd r
\,
\dd s
\Big\|^2_{L^2(\Omega;\mathbb{R}^d)}\\
& \leq
\tfrac{2}{3}
\bigg(
\sum_{i=1}^{N}
\int_{t_{i-1}}^{t_i}
\int_{t_{i-1}}^{t_{i-1}+\frac{3}{4}h}
\int_{t_{i-1}}^{r}
\big\|
\partial_{tt}            f_\beta (u,H_{t_{i-1}})
\big\|^2_{L^2(\Omega;\mathbb{R}^d)}
\dd u
\,
\dd r
\,
\dd s
\bigg)^2 \\
& \leq
C d h^3 |\ln h|^2.
\end{aligned}
\end{equation}
Now let us start to bound $\mathcal{I}_6$:
%
\begin{equation}
\begin{aligned}
& 
\mathbb{E} 
\big[
\|\mathcal{I}_6\|^2
\big]\\
& \leq
\tfrac{1}{3}
\,
\E
\bigg[
\Big\|
\sum_{i=1}^{n} 
\int_{t_{i-1}}^{t_i} 
\Big[
D^2 
f_\beta (t_{i-1},X_{t_{i-1}})
[\Delta X_{i-1}, \Delta X_{i-1}]
-
D^2 
f_\beta (t_{i-1},X_{t_{i-1}})
[\tfrac{3\sqrt{\beta}
\Delta Z_{i-1}}
{2h}, \tfrac{3\sqrt{\beta}
\Delta Z_{i-1}}{2h}]
\Big]
\mathrm{d} s
\Big\|^2
\bigg]\\
& \quad +
\tfrac{27 \beta^2}
{16 h^4}
\,
\E
\bigg[
\Big\|
\sum_{i=1}^{n}             
        \int_{t_{i-1}}^{t_i}                    
        \sum_{j\neq k}^{d}
        \tfrac{\partial^2 }{\partial x_j\partial x_k}
        f_\beta (t_{i-1},X_{t_{i-1}})
        \Delta {Z_{i-1}^{(j)}}\Delta {Z_{i-1}^{(k)}}
    \mathrm{d} s 
    \Big\|^2
\bigg]\\
& \quad +
\tfrac{1}{27}
\,
\E
\bigg[
\Big\|
\sum_{i=1}^{n} 
\int_{t_{i-1}}^{t_i}     
\int_{0}^{1}
(1-u)^2
D^3
f_\beta (t_{i-1},X_{t_{i-1}}+u\Delta X_{i-1})
[\Delta X_{i-1}, 
\Delta X_{i-1}, 
\Delta X_{i-1}]  
\,
\mathrm{d} u 
\,
\mathrm{d} r 
\Big\|^2
\bigg]\\
& =:
\mathcal{I}_{6,1}
+
\mathcal{I}_{6,2}
+
\mathcal{I}_{6,3}.
\end{aligned}
\end{equation}
Noting that, for any matrix $ U \in \R^{d \times d}$ and any  $a, b \in \R^d$,
$$
a^T U a-b^T U b=(a-b)^T U(a-b)+(a-b)^T U b+b^T U(a-b),
$$ 
and applying the H\"older inequality, \eqref{eq-lip-fx} and Lemma \ref{lem:moments-bounded},
one has a further decomposition of the first two terms $\mathcal{I}_{6,1}$ as follows:
\begin{equation}
\label{eq:I61-est}
\begin{aligned}
\mathcal{I}_{6,1}
& \leq
\tfrac{1}{3}
\,
    \E 
    \Big[
    \big\|
    D^2 
    f_\beta (t_{i-1},X_{t_{i-1}})
    [\Delta X_{i-1}, \Delta X_{i-1}] 
    -
    D^2 
    f_\beta (t_{i-1},X_{t_{i-1}})
    [\tfrac{3\sqrt{\beta} \Delta Z_{i-1}}
    {2h}, \tfrac{3\sqrt{\beta} \Delta Z_{i-1}}
    {2h}]
    \big\|^2
    \Big]\\
& =
\tfrac{81 h^4}{256}
    \E
    \Big[
    \big\|
    D^2 f_\beta (t_{i-1},X_{t_{i-1}})
    \big[
    f_\beta(t_{i-1},X_{t_{i-1}}),
    f_\beta(t_{i-1},X_{t_{i-1}})
    \big]
    \big\|^2
    \Big]\\
& \quad 
    +
    \tfrac{9\beta}{4}
    \E
    \Big[
    \big\|
    D^2 f_\beta (t_{i-1},X_{t_{i-1}})
    \big[
    \Delta Z_{i-1},f_\beta(t_{i-1},X_{t_{i-1}})
    \big]
    \big\|^2
    \Big]\\
& \quad
    +
    \tfrac{9\beta}{4}
    \E
    \Big[
    \big\|
    D^2 f_\beta (t_{i-1},X_{t_{i-1}})
    \big[
    f_\beta(t_{i-1},X_{t_{i-1}}),
    \Delta Z_{i-1}
    \big]
    \big\|^2
    \Big]\\
& \leq  
\tfrac{81 L_f^2 h^4}{256}
    \E \big[
    \|f_\beta(t_{i-1},X_{t_{i-1}})\|^4
    \big]
    + 
    \tfrac{9\beta L_f^2}{2}
    \E\big[
    \|f_\beta(t_{i-1},X_{t_{i-1}})\|^2
    \|\Delta Z_{i-1}\|^2
    \big]\\
& \leq 
    C d^2 h^3.
\end{aligned}
\end{equation}
Let us now proceed with a similar treatment for 
$\mathcal{I}_{6,2}$. Applying the H\"older inequality and the It\^o isometry, we get
\begin{equation}
\label{eq:I62-est}
    \begin{aligned}
\mathcal{I}_{6,2} 
& \leq
\tfrac{27 \beta^2}{16 h^4}
\sum_{i=1}^{n}
\E
\bigg[ 
\Big\|
\int_{t_{i-1}}^{t_i}      \sum_{j\neq k}^{d}
\tfrac{\partial^2 }{\partial x_j\partial x_k}
        f_\beta (t_{i-1},X_{t_{i-1}})
        \Delta {Z_{i-1}^{(j)}}\Delta {Z_{i-1}^{(k)}}
\mathrm{d} s
\Big\|^2
\bigg]\\
& =
\tfrac{27 \beta^2 }{16 h^2}
\sum_{i=1}^{n}
2
\sum_{j\neq k}^d
\E
\Big[
\big\|
\tfrac{\partial^2 }{\partial x_j\partial x_k}
        f_\beta (t_{i-1},X_{t_{i-1}})
        \Delta {Z_{i-1}^{(j)}}\Delta {Z_{i-1}^{(k)}}
\big\|^2 
\Big]\\
& \leq 
C d^2 h^3.
\end{aligned}
\end{equation}
With regard to $\mathcal{I}_{6,3} $,
using the H\"older inequality, Assumption \ref{ass-f-3continuous} and Lemma \ref{lem:moments-bounded} gives 
\begin{equation}
\label{eq:I63-est}
\mathcal{I}_{6,3} 
\leq 
C d^3 h^3.
\end{equation}
Gathering estimates \eqref{eq:I61-est}, \eqref{eq:I62-est} and \eqref{eq:I63-est} together yields
\begin{equation}
\mathbb{E} 
\big[
\|\mathcal{I}_6\|^2
\big]
\leq 
C d^3 h^3.
\end{equation}
Based on all the above estimates, we conclude that 
\begin{equation}
\label{eq:R_i-est}
\mathbb{E}
\Big[
\big\|
\sum_{i=1}^{n}R_{i}
\big\|^2
\Big]
\leq 
C d^3 h^3
(\ln h)^2.
\end{equation}

\noindent
Invoking \eqref{eq:R_i-est}, \eqref{eq:error-propagation} and the discrete Gronwall inequality, 
we finally obtain
\begin{equation}
\mathbb{E}
\big[
\|E_n\|^2
\big]
\leq 
C d^3h^3(|\ln h|)^2 \exp 
\Big(
\sum_{i=1}^N
C h
\Big) 
\leq Cd^3h^3(|\ln h|)^2.
\end{equation}
The proof of Theorem \ref{thm-exact-drift} is completed.
\end{proof}

As a direct consequence, 
we obtain the following result on a required number of iterations or mixing time of the algorithm (\ref{eq-srk-appr}), whose proof is easy and thus omitted.
\begin{proposition}
    Suppose Assumptions \ref{ass-abso-cont}, \ref{ass-RN-lip} and \ref{ass-f-3continuous} hold. 
    To achieve a given precision level $\epsilon > 0$ 
    in the $L^2$-Wasserstein distance, 
    a required number of iterations of the algorithm (\ref{eq-srk-appr}) is of order 
    $\mathcal{O} \big(\tfrac{d}{\epsilon^{2/3}}(\ln\tfrac{d^{3/2}}{\epsilon})^{2/3}\big)$.
\end{proposition}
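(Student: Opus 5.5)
The plan is to invert the error bound of Theorem \ref{thm-exact-drift}. Since $X_1\sim\mu$ by Proposition \ref{prop-drift-lip} and $Y_N=Y_1$ is built on the same Brownian path, the synchronous coupling gives
\[
\mathcal{W}_2\big(\mathrm{Law}(Y_1),\mu\big)\le C(dh)^{3/2}|\ln h|,
\]
with $C$ independent of $d$ and $h$. The number of iterations is $N=1/h$. So the task is to find the largest $h$, or smallest $N$, for which this right-hand side is at most $\epsilon$, and then read off $N$.

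\textbf{Step 1: a candidate step size.} Writing $h=1/N$, the requirement becomes
\[
C d^{3/2} N^{-3/2}\ln N\le\epsilon,
\qquad\text{equivalently}\qquad
N\ge (C/\epsilon)^{2/3}\, d\,(\ln N)^{2/3}.
\]
I would try $N=\lceil K\,d\,\epsilon^{-2/3}(\ln(d^{3/2}/\epsilon))^{2/3}\rceil$ for a constant $K\ge1$ depending only on $C$.

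\textbf{Step 2: controlling $\ln N$.} This is the only non-routine step, because of the implicit logarithm. I would bound $\ln N\le \ln K+\ln(d\epsilon^{-2/3})+\tfrac23\ln\ln(d^{3/2}/\epsilon)$. Since $\ln(d\epsilon^{-2/3})=\tfrac23\ln(d^{3/2}/\epsilon)$, and assuming $\epsilon$ is small enough that $L:=\ln(d^{3/2}/\epsilon)\ge1$, this gives $\ln N\le c_K L$. Substituting,
\[
Cd^{3/2}N^{-3/2}\ln N\le C K^{-3/2}\epsilon\, L^{-1}\, c_K L .
\]
Because $c_K$ grows only logarithmically in $K$, the factor $C K^{-3/2}c_K$ is at most $1$ once $K$ is large enough.

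\textbf{Step 3: monotonicity and conclusion.} The map $h\mapsto h^{3/2}|\ln h|$ is increasing on $(0,e^{-2/3})$, so any larger $N$ also works. This yields the claimed iteration count $\mathcal{O}\big(\tfrac{d}{\epsilon^{2/3}}(\ln\tfrac{d^{3/2}}{\epsilon})^{2/3}\big)$.
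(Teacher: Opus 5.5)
Your proposal is correct and is essentially the paper's argument. The paper omits this proof, but for the Monte Carlo analogue (Proposition \ref{prop:mixing-time}, Appendix \ref{appendix_mixing-time}) it performs the same inversion of $C(dh)^{3/2}|\ln h|\le\epsilon$; there it handles the implicit logarithm with the inequality $x^3/(\ln x)^2\ge A$ for $x\ge A^{1/3}(\ln A)^{2/3}$, $A=d^3/\epsilon^2$, instead of your ansatz with a large constant $K$. If anything, your version is more careful: it tracks $C$ and uses the monotonicity of $h\mapsto h^{3/2}|\ln h|$ to pass to all larger $N$, both of which the paper's rearrangement glosses over.
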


\noindent
\textbf{Examples with exact drift: Gaussian mixture distributions.}
\newline

When the target distribution $\mu$ has special structure,
such as Gaussian mixture distributions:
\begin{equation}
\mu = \sum_{i=1}^{\kappa} \theta_{i} \mathcal{N}(\alpha_{i},
\mathbf{\Sigma}_{i}),
\quad \sum_{i=1}^{\kappa} \theta_{i}=1
\quad \text{and} \quad 0 \leq \theta_{i} \leq 1, \quad i=1,\ldots,\kappa,
\end{equation}
we can calculate the exact drift analytically.
For the Gaussian mixture distributions, the drift term (\ref{eq-sfs-exact-dens}) can be rewritten as:
\begin{equation}
f_{\beta}(t, x) = \frac{\beta\sum_{i=1}^{\kappa}\theta_{i}\mathbb{E}_{\xi}\left[\nabla g_{\beta,i}(x+\sqrt{(1-t)\beta}\,\xi)\right]}
{\sum_{i=1}^{\kappa}\theta_{i}\mathbb{E}_{\xi}\left[g_{\beta,i}(x+\sqrt{(1-t)\beta}\,\xi)\right]},
\quad\xi\sim\gamma^{d}.
\end{equation}
Here $\kappa\in \mathbb{N}$ is the number of mixture components,
$\mathcal{N}(\alpha_{i},\Sigma_{i})$ is the $i$-th Gaussian component
with mean $\alpha_{i}\in \mathbb{R}^{d}$ and covariance matrix $\Sigma_{i}\in \mathbb{R}^{d\times d}$.
As shown in \cite{wang2025multimodal}, one has
\begin{equation*}
    \begin{aligned}
        &\mathbb{E}_{\xi}\left[\nabla g_{\beta,i}(x+\sqrt{(1-t)\beta}\,\xi)\right] \\
        =& \frac{\Sigma_{i}^{-1}\alpha_{i}+\left(\tfrac{1}{\beta}\mathbf{I}_{d}-\Sigma_{i}^{-1}\right)\left[t\mathbf{I}_{d}+(1-t)\beta\Sigma_{i}^{-1}\right]^{-1}\left[(1-t)\beta\Sigma_{i}^{-1}\alpha_{i}+x\right]}{\|t\Sigma_{i}+(1-t)\beta\mathbf{I}_{d}\|^{1/2}}b_{\beta,i}(t,x),
    \end{aligned}
\end{equation*}
\begin{equation*}
\mathbb{E}_{\xi}\left[g_{\beta,i}(x+\sqrt{(1-t)\beta}\,\xi)\right]=\frac{b_{\beta,i}(t,x)}{\|t\Sigma_{i}+(1-t)\beta\mathbf{I}_{d}\|^{1/2}},
\end{equation*}
where
\begin{equation}
\begin{aligned}
b_{\beta,i}(t, x)=&\exp\left(\tfrac{1}{2(1-t)\beta}\left\|\left(t\mathbf{I}_{d}+(1-t)\beta\Sigma_{i}^{-1}\right)^{-1/2}\left((1-t)\beta\Sigma_{i}^{-1}\alpha_{i}+x\right)\right\|^{2}\right)\\
&\cdot\exp\left(-
\tfrac{1}{2}\alpha_{i}^{T}\Sigma_{i}^{-1}\alpha_{i}-\tfrac{1}{2(1-t)\beta}\|x\|^{2}\right).
\end{aligned}
\end{equation}

\subsection{Accelerated Schr\"odinger-F\"ollmer sampler with inexact drift}
%
When the target distribution $\mu$ is complex, 
the exact drift $f_\beta$ is often computationally intractable. 
In this case,
we can use the Monte Carlo method to approximate the expectation, and thereby obtain an estimator 
of $ f_{\beta} $
defined by (\ref{eq-sfs-exact-dens}) with $ M $-sample mean.
Let $ \xi_{1},\cdots,\xi_{M} $ be i.i.d.\ $ \gamma^{d} $,
where $ M\geq 1 $ is sufficiently large. 
Therefore, we can approximate $ f_{\beta}(t,x) $ 
by the estimator $ \widetilde{f}_{\beta}^{M}:\Omega_{\xi}\times\mathbb{R}^{d}\times [0,1) \rightarrow\mathbb{R}^{d} $, 
\begin{equation}
\label{eq-sfs-mc-exact}
    \begin{aligned}
    \widetilde{f}_{\beta}^{M}(t,x) =& \frac{\tfrac{\beta}{M} 
    \sum\limits_{j=1}^{M} 
    \left[ \nabla g_{\beta} 
    \big(x + \sqrt{(1-t) \beta} 
    \, \xi_{j} \big) \right]}
    {\tfrac{1}{M} \sum\limits_{j=1}^{M}
    \left[ g_{\beta} \big( x + \sqrt{(1-t) \beta} \, \xi_{j} \big) \right]}\\
    =& 
    \frac{
    \tfrac{\sqrt{\beta}}{M} \sum\limits_{j=1}^{M} 
    \left[ \xi_j \cdot 
       g_{\beta} 
    \big(x + \sqrt{(1-t) \beta} 
    \, \xi_{j} \big)
    \right]}
    {\tfrac{\sqrt{(1-t)}
    }{M}
    \sum\limits_{j=1}^{M}
    \left[             
    g_{\beta} 
    \big(x + \sqrt{(1-t) \beta} 
    \, \xi_{j} \big)
    \right]},
    \quad \xi_{j} \sim \gamma^{d},        
    \end{aligned}
\end{equation}
where the second equality stands due to Stein's lemma.
In this setting, the SRK Schr\"odinger-F\"ollmer sampler with temperatures is given by
\begin{equation}
\label{eq-srk-mc-appr}
\begin{aligned}
\widetilde{Y}_{n+1}
&= 
\widetilde{Y}_{n} 
+ 
\tfrac{1}{3}
\widetilde{f}_{\beta}^{M}
(t_n, \widetilde{Y}_{n})h 
+ 
\tfrac{2}{3}
\widetilde{f}_{\beta}^{M}
\big(t_n+\tfrac{3}{4}h, \widetilde{H}_{n}\big)h+
\sqrt{\beta}
\Delta {W_n},
\end{aligned}
\end{equation}
with
\[
\widetilde{H}_{n} = \widetilde{Y}_{n} + \tfrac{3}{4}\widetilde{f}_{\beta}^{M}(t_n, \widetilde{Y}_{n})h + \tfrac{3\sqrt{\beta}\Delta {Z_n}}{2h}.
\]

For the purpose of the error analysis, we show that the inexact drift $\widetilde{f}^M_\beta$ is also Lipschitz continuous.
\begin{proposition}
\label{prop-sfs-mc-f-lip}
    Let Assumptions \ref{ass-abso-cont} and \ref{ass-RN-lip} hold. Then 
    for $t\in [0,1)$, the estimator $\widetilde{f}^M_{\beta} \colon
    (t, \cdot)
    \rightarrow \mathbb{R}^d$ is uniformly Lipschitz continuous:
    \begin{align}
    \label{eq-sfs-mc-f-lip}
        \big\|
        \widetilde{f}^M
        _{\beta}(t,x)
        -
        \widetilde{f}^M
        _{\beta}(t,y)
        \big\|
        \leq
        L_f\|x-y\|,
        \quad
        \forall \, x,y \in \mathbb{R}^d,
    \end{align} 
    { where $L_f := \big(1+\tfrac{L_g}{\rho}
    \big)
    \tfrac{\beta L_g}{\rho}$ is independent of $d$ and $h$.}
\end{proposition}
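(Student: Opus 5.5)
We fix $t\in[0,1)$ and a realization of $\xi_1,\dots,\xi_M$, and treat $\widetilde f^M_\beta(t,\cdot)$ as a deterministic function of $x$. The plan is to bound its Jacobian uniformly and then integrate along segments. Write $\sigma:=\sqrt{(1-t)\beta}$ and set
\[
G(x):=\tfrac1M\sum_{j=1}^M g_\beta(x+\sigma\xi_j),
\qquad
F(x):=\tfrac{1}{M}\sum_{j=1}^M \nabla g_\beta(x+\sigma\xi_j).
\]
Using the first (gradient) form in \eqref{eq-sfs-mc-exact}, we have $\widetilde f^M_\beta(t,x)=\beta F(x)/G(x)$. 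The key point is that this representation avoids the factor $1/\sqrt{1-t}$ of the Stein form. It is also the same structure as the exact drift, with the Gaussian expectation replaced by the empirical mean. The constant $L_f$ in Proposition \ref{prop-drift-lip} should therefore come out of the same computation, and we can mirror that argument.

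The bounds we need follow from Assumption \ref{ass-RN-lip}, each being an average of pointwise bounds. First, $G\ge\rho$ because $g_\beta\ge\rho$. Second, $\|F(x)\|\le L_g$ because $g_\beta$ is $L_g$-Lipschitz, so $\|\nabla g_\beta\|\le L_g$. Third, $F$ is $L_g$-Lipschitz because $\nabla g_\beta$ is, and hence $\|DF(x)\|\le L_g$ wherever $\nabla g_\beta$ is differentiable. Fourth, $G$ is differentiable with $\nabla G=F$.

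Rather than differentiating, we will estimate the difference directly to avoid regularity issues:
\[
\frac{F(x)}{G(x)}-\frac{F(y)}{G(y)}=\frac{F(x)-F(y)}{G(x)}+F(y)\,\frac{G(y)-G(x)}{G(x)G(y)}.
\]
The first term has norm at most $\tfrac{L_g}{\rho}\|x-y\|$. For the second term we use $|G(x)-G(y)|\le L_g\|x-y\|$ together with $\|F(y)\|\le L_g$ and $G(x)G(y)\ge\rho^2$, which bounds it by $\tfrac{L_g^2}{\rho^2}\|x-y\|$. Multiplying by $\beta$ gives
\[
\big\|\widetilde f^M_\beta(t,x)-\widetilde f^M_\beta(t,y)\big\|\le \beta\Big(\tfrac{L_g}{\rho}+\tfrac{L_g^2}{\rho^2}\Big)\|x-y\|=\Big(1+\tfrac{L_g}{\rho}\Big)\tfrac{\beta L_g}{\rho}\|x-y\|,
\]
which is exactly $L_f$. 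This bound holds for every realization of the $\xi_j$, every $M$, and every $t\in[0,1)$, and it is independent of $d$ and $h$.

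The only delicate point is to make sure we use the gradient form and not the Stein form. Since the two forms coincide only in expectation, the empirical version in the first line of \eqref{eq-sfs-mc-exact} is the one defining the estimator, and it requires no $\sqrt{1-t}$ control. If the Stein form were taken as the definition instead, the Lipschitz constant would carry a factor $\|\xi_j\|/\sqrt{1-t}$ and would not be uniform. We would therefore state explicitly that the gradient representation is used.
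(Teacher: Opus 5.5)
Your proof is correct and follows the standard argument. The pathwise quotient decomposition, using $G\ge\rho$, $\|F\|\le L_g$ and the $L_g$-Lipschitz bounds on $F$ and $G$, yields exactly $L_f=(1+\tfrac{L_g}{\rho})\tfrac{\beta L_g}{\rho}$. This is the same computation behind Proposition~\ref{prop-drift-lip}; the paper states the present proposition without a written proof and implicitly relies on that argument from the cited works.

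Your insistence on the gradient form is also right, because the ``second equality'' in \eqref{eq-sfs-mc-exact} holds only in expectation. For the Stein-form estimator one gets only a realization-dependent constant: a sharper computation removes the $1/\sqrt{1-t}$ factor but still leaves a random factor such as $\max_{j,k}\|\xi_j-\xi_k\|^2$, which is typically of order $d$.
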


The statistical error caused by the Monte Carlo approximation is quantified by the following lemma (see \cite[Lemma A.6]{huang2025schrodinger} for a similar proof).
\begin{lemma}
\label{lem-mc-error}
Under Assumptions  \ref{ass-abso-cont} and \ref{ass-RN-lip},
for any $x \in \mathbb{R}^d$ and $t\in[0,1)$, 
\begin{equation}
\label{eq-sfs-f-mc-error}
\mathbb{E} 
\big[ 
\| \widetilde{f}_{\beta}^{M}(t, x) - 
f_\beta(t, x) \|^2 
\big]
\leq 
\tfrac{C_2 d}{M},
\end{equation}
where $C_2 := \tfrac{4L_g^2\beta^2}{\rho}\big( 1+\tfrac{L_g^2}{\rho^2} \big)$, 
$L_g, \rho$ come from Assumption \ref{ass-RN-lip}, 
and $M$ is the number of samples
used in the Monte Carlo estimator \eqref{eq-sfs-mc-exact}.
\end{lemma}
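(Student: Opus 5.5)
We prove Lemma \ref{lem-mc-error} by writing the estimator as a ratio of two Monte Carlo means and controlling each mean's variance with Assumption \ref{ass-RN-lip}. Fix $t\in[0,1)$ and $x\in\mathbb{R}^d$, set $\sigma:=\sqrt{(1-t)\beta}$, and define
\[
A := \mathbb{E}_\xi\big[\nabla g_\beta(x+\sigma\xi)\big],\quad B := \mathbb{E}_\xi\big[g_\beta(x+\sigma\xi)\big],
\]
\[
A_M := \tfrac1M\sum_{j=1}^M \nabla g_\beta(x+\sigma\xi_j),\quad B_M := \tfrac1M\sum_{j=1}^M g_\beta(x+\sigma\xi_j).
\]
Then $f_\beta(t,x)=\beta A/B$ and $\widetilde f^M_\beta(t,x)=\beta A_M/B_M$, using the first (gradient) form in \eqref{eq-sfs-mc-exact}. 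We use the gradient form rather than the Stein form: the $\xi_j g_\beta$ form carries a factor $1/\sqrt{1-t}$, which would destroy uniformity in $t$.

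\textbf{Step 1 (algebraic decomposition).} Write
\[
\frac{A_M}{B_M}-\frac{A}{B}=\frac{A_M-A}{B_M}+\frac{A\,(B-B_M)}{B\,B_M}.
\]
Since $g_\beta\ge\rho$, both $B_M\ge\rho$ and $B\ge\rho$ hold pointwise. Since $g_\beta$ is $L_g$-Lipschitz, $\|\nabla g_\beta\|\le L_g$ everywhere, so $\|A\|\le L_g$. This gives the pointwise bound
\[
\|\widetilde f^M_\beta-f_\beta\|^2\le 2\beta^2\Big(\tfrac{\|A_M-A\|^2}{\rho^2}+\tfrac{L_g^2|B_M-B|^2}{\rho^4}\Big).
\]

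\textbf{Step 2 (variance bounds via Gaussian Poincar\'e).} The summands are i.i.d., so
\[
\mathbb{E}\|A_M-A\|^2=\tfrac1M\,\mathrm{Var}\big(\nabla g_\beta(x+\sigma\xi)\big),\qquad \mathbb{E}|B_M-B|^2=\tfrac1M\,\mathrm{Var}\big(g_\beta(x+\sigma\xi)\big).
\]
The map $\xi\mapsto g_\beta(x+\sigma\xi)$ is $\sigma L_g$-Lipschitz with $\sigma^2\le\beta$. By the Gaussian Poincar\'e inequality its variance is at most $\beta L_g^2$, which is independent of $d$. For the vector $\nabla g_\beta$, each coordinate $\partial_k g_\beta(x+\sigma\xi)$ has variance at most $\sigma^2\,\mathbb{E}\|\nabla\partial_k g_\beta\|^2$. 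Summing over $k$ gives
\[
\mathrm{Var}\big(\nabla g_\beta(x+\sigma\xi)\big)\le \beta\,\mathbb{E}\|\nabla^2 g_\beta\|_{\mathrm F}^2\le \beta d L_g^2,
\]
because $\nabla g_\beta$ is $L_g$-Lipschitz, so $\|\nabla^2 g_\beta\|\le L_g$ and $\|\nabla^2 g_\beta\|_{\mathrm F}^2\le dL_g^2$. This is where the factor $d$ enters.

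A cruder alternative is to bound each variance by its second moment, e.g. $\mathbb{E}\|\nabla g_\beta(x+\sigma\xi)-\nabla g_\beta(x)\|^2\le L_g^2\sigma^2 d$. This yields the same order.

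\textbf{Step 3 (combine).} Inserting Step 2 into Step 1 gives
\[
\mathbb{E}\|\widetilde f^M_\beta-f_\beta\|^2\le \tfrac{2\beta^3L_g^2}{M}\Big(\tfrac{d}{\rho^2}+\tfrac{L_g^2}{\rho^4}\Big)\le \tfrac{C d}{M},
\]
uniformly in $t$ and $x$.

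The stated constant $C_2=\tfrac{4L_g^2\beta^2}{\rho}(1+\tfrac{L_g^2}{\rho^2})$ has different powers of $\rho$ and $\beta$ from the one derived here. Matching it exactly would require following the normalization of \cite[Lemma A.6]{huang2025schrodinger}, which I expect to be only bookkeeping, e.g. scaling $g_\beta$ or using $B\ge\rho$ asymmetrically. The main point to check is therefore the uniformity in $t$ near $1$. The gradient form together with Lipschitzness of $\nabla g_\beta$ handles this, since no negative power of $\sigma$ ever appears.
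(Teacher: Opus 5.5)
Your argument is correct. It is essentially the ratio-of-sample-means argument that the paper defers to via \cite[Lemma A.6]{huang2025schrodinger}: split $A_M/B_M-A/B$, use $B,B_M\ge\rho$ and $\|A\|\le L_g$, and bound the two i.i.d.\ variances through the Lipschitz properties of $g_\beta$ and $\nabla g_\beta$. Your insistence on the gradient form is right and is not cosmetic. The two expressions in \eqref{eq-sfs-mc-exact} agree only after taking expectations, not as finite-$M$ sample means. For the Stein form no $t$-uniform bound is possible: with $M=1$ it reduces to $\sqrt{\beta/(1-t)}\,\xi_1$, whose mean-square error is $\tfrac{\beta d}{1-t}+\|f_\beta(t,x)\|^2$.

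You should retract the expectation that recovering $C_2=\tfrac{4L_g^2\beta^2}{\rho}\big(1+\tfrac{L_g^2}{\rho^2}\big)$ is bookkeeping, because that constant is not attainable.
\begin{itemize}
\item The estimator is a ratio of sample means of $\nabla g_\beta$ and $g_\beta$. So the left-hand side is unchanged under $g_\beta\mapsto c\,g_\beta$ for every $M$, whereas $C_2$ is multiplied by $c$.
\item The normalization $\int g_\beta\,\mathrm{d}\mathcal{N}(0,\beta\mathbf{I}_d)=1$ does not prevent this, since it can be enforced far from $x$.
\item Concretely, take $d=M=1$ and $t=0$. Let $g_\beta(y)=\rho\big(\tfrac32+\tfrac12\cos y\big)$ on a long interval around a distant point $x$. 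Join it by a slowly varying transition to a plateau near the origin that carries the normalization. Then $L_g=\rho/2$ and $\inf g_\beta=\rho$.
\item Up to exponentially small terms, the left-hand side equals $\beta^2\,\mathbb{E}\big|\tfrac{\sin Y}{3+\cos Y}-\tfrac{\mathbb{E}[\sin Y]}{\mathbb{E}[3+\cos Y]}\big|^2$ with $Y\sim\mathcal{N}(x,\beta)$. This does not involve $\rho$, and it is about $0.06\,\beta^2$ once $\beta$ is moderately large.
\item Meanwhile $C_2=\tfrac54\rho\beta^2\to0$ as $\rho\to0$, so the stated bound fails.
\end{itemize}

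The correct conclusion is your bound with $C=\tfrac{2\beta^3L_g^2}{\rho^2}\big(1+\tfrac{L_g^2}{\rho^2}\big)$; its power $\rho^{-2}$ is the scale-invariant one. Theorem \ref{thm-mc-error-bound} only uses the form $Cd/M$ with $C$ independent of $d,h,M,t,x$, so nothing downstream is affected. Incidentally, $\|\nabla g_\beta\|\le L_g$ already gives $\mathrm{Var}\big(\nabla g_\beta(x+\sigma\xi)\big)\le L_g^2$ (with your $\sigma$), so the factor $d$ is not even needed.
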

Now, we are able to establish the error bound for the SRK Schr\"odinger-F\"ollmer sampler \eqref{eq-srk-mc-appr}. 
\begin{thm}
\label{thm-mc-error-bound}
(Main result: error bounds with inexact drift)
Let Assumptions \ref{ass-abso-cont}, \ref{ass-RN-lip} and \ref{ass-f-3continuous} hold. 
Let $\{\widetilde{Y}_n\}_{n \in[N]_0}$ be 
defined by \eqref{eq-srk-mc-appr} with the uniform step size $h=1/N$. 
Then there exists a constant $C$ independent of $d,h$ and $M$, such that,
\begin{align*}
\mathcal{W}_2
\big(
\text{Law}(\widetilde{Y}_1), \mu
\big) 
\leq 
C (d h)^{3/2} |\ln h| 
+
C \sqrt{\tfrac{d}{M}},
\end{align*}
where $M$ is the number of samples used in the Monte Carlo estimator
\eqref{eq-sfs-mc-exact}.
\end{thm}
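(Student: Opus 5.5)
We will reduce Theorem \ref{thm-mc-error-bound} to Theorem \ref{thm-exact-drift} through the triangle inequality. Writing
\[
\mathcal{W}_2\big(\mathrm{Law}(\widetilde{Y}_N),\mu\big)\le \mathcal{W}_2\big(\mathrm{Law}(\widetilde{Y}_N),\mathrm{Law}(Y_N)\big)+\mathcal{W}_2\big(\mathrm{Law}(Y_N),\mu\big),
\]
the second term is already bounded by $C(dh)^{3/2}|\ln h|$. It therefore suffices to couple the two samplers, using the same $\Delta W_n,\Delta Z_n$ and $Y_0=\widetilde Y_0=0$, and to show $\mathbb{E}\|Y_N-\widetilde Y_N\|^2\le Cd/M$. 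We assume, as is standard, that fresh i.i.d.\ Gaussians $\xi_1,\dots,\xi_M$ are drawn at each drift evaluation and are independent of $W$ and of the current state. Then Lemma \ref{lem-mc-error} can be applied conditionally on the (random) evaluation point.

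Set $e_n:=\widetilde Y_n-Y_n$. Subtracting \eqref{eq-srk-appr} from \eqref{eq-srk-mc-appr}, we split each drift difference as
\[
\widetilde f^M_\beta(t,\widetilde Y_n)-f_\beta(t,Y_n)=\big[\widetilde f^M_\beta(t,\widetilde Y_n)-f_\beta(t,\widetilde Y_n)\big]+\big[f_\beta(t,\widetilde Y_n)-f_\beta(t,Y_n)\big],
\]
and do the same at the stage $(t_n+\tfrac34 h,\widetilde H_n)$ versus $(t_n+\tfrac34h,H_n)$. The second bracket is bounded by $L_f\|e_n\|$ using \eqref{eq-lip-fx}. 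For the stage we also have
\[
\|\widetilde H_n-H_n\|\le \|e_n\|+\tfrac34 h\big(L_f\|e_n\|+\|\widetilde f^M_\beta(t_n,\widetilde Y_n)-f_\beta(t_n,\widetilde Y_n)\|\big).
\]
The first bracket is a pure Monte Carlo error. Conditionally on $\widetilde Y_n$ (respectively $\widetilde H_n$), Lemma \ref{lem-mc-error} gives a second moment of at most $C_2 d/M$. This holds uniformly in $t\in[0,1)$, and the evaluation times $t_n$, $t_n+\tfrac34h$ are all $<1$.

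This yields a one-step recursion $\|e_{n+1}\|\le (1+Ch)\|e_n\|+Ch\,\varepsilon_n$, where $\varepsilon_n$ collects the Monte Carlo errors and satisfies $\mathbb{E}\varepsilon_n^2\le Cd/M$. Squaring with Young's inequality, $(a+b)^2\le(1+h)a^2+(1+1/h)b^2$, gives
\[
\mathbb{E}\|e_{n+1}\|^2\le(1+Ch)\,\mathbb{E}\|e_n\|^2+Ch\,\frac{d}{M}.
\]
The discrete Gronwall inequality then gives $\mathbb{E}\|e_N\|^2\le Ce^{C}d/M$. Taking square roots and combining with Theorem \ref{thm-exact-drift} finishes the proof.

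The main obstacle is the Monte Carlo contribution. The crude Young/Gronwall bound above already gives the claimed order $\sqrt{d/M}$, because the errors enter multiplied by $h$ and are summed $N$ times. So no martingale cancellation is needed, but the conditioning step must be justified carefully. Because $\widetilde Y_n$ depends on earlier $\xi$'s, we must use fresh and independent samples at each evaluation and apply Lemma \ref{lem-mc-error} via the tower property (freezing $x$). If the paper instead reuses the same $\xi_j$ throughout, the stated $\sqrt{d/M}$ bound still goes through in the above sense provided Lemma \ref{lem-mc-error} is applied pointwise. The cleanest route, however, is to assume freshness explicitly.

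A minor additional check is Proposition \ref{prop-sfs-mc-f-lip}. It is not strictly needed when the split is made as above, with Lipschitz continuity used only for the exact $f_\beta$. It would be needed only if one splits the other way around.
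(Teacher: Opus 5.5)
Under your explicit assumption of fresh samples at every drift evaluation, your argument is correct. It splits the drift difference the opposite way from the paper, and the difference matters. The paper writes $\widetilde f^M_\beta(t,\widetilde Y_{i-1})-f_\beta(t,Y_{i-1})=[\widetilde f^M_\beta(t,\widetilde Y_{i-1})-\widetilde f^M_\beta(t,Y_{i-1})]+[\widetilde f^M_\beta(t,Y_{i-1})-f_\beta(t,Y_{i-1})]$, and does the same at $\widetilde H_{i-1}$ versus $H_{i-1}$. It bounds the first bracket pathwise by Proposition~\ref{prop-sfs-mc-f-lip}. It then evaluates the Monte Carlo error only at the exact-drift iterates $Y_{i-1}$ and $H_{i-1}$. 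These are functions of $W$ alone, so the product structure of the probability space gives $\E[\cdot]=\E_W[\E_\xi[\cdot]]$. Lemma~\ref{lem-mc-error} therefore applies with $x$ frozen, whether or not the samples are redrawn. The paper also works with the telescoped sum and $(\sum_{i=1}^n a_i)^2\le n\sum_{i=1}^n a_i^2$ with $nh\le 1$, rather than your one-step Young recursion; that difference is inessential. Your split trades Proposition~\ref{prop-sfs-mc-f-lip} for the freshness hypothesis, because your Monte Carlo errors sit at $\widetilde Y_n$ and $\widetilde H_n$, which depend on earlier samples.

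For the same reason, your fallback remark about reused samples is wrong. The paper's notation $\widetilde f^M_\beta:\Omega_\xi\times\R^d\times[0,1)\to\R^d$ suggests that one fixed set $\xi_1,\dots,\xi_M$ is reused at every evaluation. In that case $\widetilde Y_n$ is correlated with the samples. The pointwise bound $\sup_x\E\|\widetilde f^M_\beta(t,x)-f_\beta(t,x)\|^2\le C_2 d/M$ then gives no control of $\E\|\widetilde f^M_\beta(t,\widetilde Y_n)-f_\beta(t,\widetilde Y_n)\|^2$. ``Applying the lemma pointwise'' is not legitimate at a random evaluation point that depends on the samples. Repairing this forces you back to the paper's decomposition, for which the realization-by-realization Lipschitz bound of Proposition~\ref{prop-sfs-mc-f-lip} is essential. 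So your statement that this proposition is not needed holds only in the fresh-sample reading. The paper's route proves the theorem under either reading of the scheme, while yours covers only the fresh-sample implementation.
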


\begin{proof}
Using the triangle inequality, we first decompose the total error as follows:
\begin{equation}
    \mathcal{W}_2 
    \big(
    \text{Law}(\widetilde{Y}_1), \mu
    \big) 
    \leq 
    \mathcal{W}_2 
    \big(
    \text{Law}(\widetilde{Y}_1), \text{Law}(Y_1)
    \big) + \mathcal{W}_2 
    \big(
    \text{Law}(Y_1), \mu
    \big).
\end{equation}
From Theorem \ref{thm-exact-drift} it follows that
\[
\mathcal{W}_2 
    \big(
    \text{Law}(Y_1), \mu
    \big)
    \leq
    C (d h)^{3/2} |\ln h|.
\]
Consequently, it remains to estimate the first term, 
which captures the propagation of Monte Carlo error through the SRKSFS \eqref{eq-srk-mc-appr}. 
We subtract \eqref{eq-srk-mc-appr} from \eqref{eq-srk-appr} to obtain
\begin{equation}
\begin{aligned}
\widetilde{Y}_n
-
Y_n
&= 
\widetilde{Y}_{n-1}
-
Y_{n-1}
+
\tfrac{h}{3}
\Big( 
\widetilde{f}_\beta^M (t_{n-1},\widetilde{Y}_{n-1}) 
-
f_\beta(t_{n-1},Y_{n-1}) \Big)
\\
&\qquad+
\tfrac{2h}{3}\Big(\widetilde{f}_\beta^M (t_{n-1}+\tfrac{3h}{4},\widetilde{H}_{n-1}) - f_\beta(t_{n-1}+\tfrac{3h}{4},H_{n-1}) \Big)
\\
&=
\tfrac{h}{3}
\sum_{i=1}^{n}
\Big( 
\widetilde{f}_\beta^M (t_{i-1},\widetilde{Y}_{i-1}) 
-
f_\beta(t_{n-1},Y_{n-1}) \Big)
\\
&\qquad
+\tfrac{2h}{3}\sum_{i=1}^n
\Big(
\widetilde{f}_\beta^M (t_{i-1}+\tfrac{3h}{4},\widetilde{H}_{i-1}) 
- 
f_\beta(t_{i-1}+\tfrac{3h}{4},H_{i-1}) 
\Big),
\end{aligned}
\end{equation}
where we used the fact that $\widetilde{Y}_0=Y_0=0$.
Squaring both sides of the above equation, taking expectations and using the Lipschitz condition \eqref{eq-sfs-mc-f-lip}, we arrive at
\begin{equation}
\begin{aligned}
&
\mathbb{E}
\big[
\|\widetilde{Y}_n - Y_n\|^2
\big]\\
& \leq
\tfrac{2h}{9}
\sum_{i=1}^n
\E
\big[
\|\widetilde{f}_\beta^M (t_{i-1},\widetilde{Y}_{i-1}) 
-
f_\beta(t_{i-1},Y_{i-1})\| ^2
\big]
\\
& \quad +
\tfrac{8h}{9}
\sum_{i=1}^n
\E
\big[
\|\widetilde{f}_\beta^M (t_{i-1}+\tfrac{3h}{4},\widetilde{H}_{i-1}) - f_\beta(t_{i-1}+\tfrac{3h}{4},H_{i-1})\| ^2
\big]
\\
&\leq 
\tfrac{4L_f^2h}{9}
\sum_{i=1}^n
\mathbb{E}
\big[
\|\widetilde{Y}_{i-1} - Y_{i-1}\|^2
\big]
+
\tfrac{4h}{9}
\sum_{i=1}^n
\E
\big[
\|\widetilde{f}_\beta^M (t_{i-1},{Y}_{i-1}) 
-
\widetilde{f}_\beta(t_{i-1},{Y}_{i-1})\| ^2
\big]\\
& \quad +
\tfrac{16L_f^2h}{9}
\sum_{i=1}^n
\mathbb{E}
\big[
\|\widetilde{H}_{i-1} - H_{i-1}\|^2
\big]
+
\tfrac{16h}{9}
\sum_{i=1}^n
\E
\big[
\|\widetilde{f}_\beta^M 
(t_{i-1}+\tfrac{3h}{4},{H}_{i-1}) 
- 
f_\beta(t_{i-1}+\tfrac{3h}{4},{H}_{i-1})\|^2
\big]\\
& \leq
\big(
2L_f^2 +\tfrac{4}{9}
\big)
L_f^2 h
\sum_{i=1}^n
\mathbb{E}
\big[
\|\widetilde{Y}_{i-1} - Y_{i-1}\|^2
\big]
+
\big(
2L_f^2 +\tfrac{4}{9}
\big)
 h
\sum_{i=1}^n
\E
\big[
\|\widetilde{f}_\beta^M (t_{i-1},{Y}_{i-1}) 
-
f_\beta(t_{i-1},{Y}_{i-1})\| ^2
\big]\\
& \quad +
\tfrac{16h}{9}
\sum_{i=1}^n
\E
\big[
\|\widetilde{f}_\beta^M (t_{i-1}+\tfrac{3h}{4},{H}_{i-1}) 
- 
f_\beta(t_{i-1}+\tfrac{3h}{4},{H}_{i-1})\|^2
\big].
\end{aligned}
\end{equation}
For the estimate of the last second term of the above inequality, 
we use the property of the conditional expectation \cite[Theorem 2.24]{klebaner2012introduction} and Lemma \ref{lem-mc-error} to get, 
for any $i \in [N-1]$,
\begin{equation}
\begin{aligned}
\mathbb{E} 
\big[ 
\| \widetilde{f}_{\beta}^{M}(t_i, {Y}_i)
-
{f}_\beta(t_i, {Y}_i)\|^2 
\big]
& = 
\E
\Big[
\E
\big[ 
\| \widetilde{f}_{\beta}^{M}(t_i, {Y}_i)
-
{f}_\beta(t_i, {Y}_i)\|^2
\big|
{Y}_i
\big]
\Big]\\
& =
\E_W\Big[\E_\xi\big[ 
\| \widetilde{f}_{\beta}^{M}(t_i, {Y}_i)
-
{f}_\beta(t_i, {Y}_i)\|^2
\big]\Big]\\
& \leq 
\tfrac{C_2 d}{M},
\end{aligned}
\end{equation}
and 
\begin{equation}
\E
\big[
\|\widetilde{f}_\beta^M 
(t_{i-1}+\tfrac{3h}{4},{H}_{i-1}) 
- 
f_\beta(t_{i-1}+\tfrac{3h}{4},{H}_{i-1})
\|^2
\big] 
\leq 
\tfrac{C_2 d}{M}.
\end{equation}

\noindent
Equipped with the above estimates, one can derive from the discrete Gronwall inequality 
that 
\begin{equation}
\mathbb{E}
\big[
\|\widetilde{Y}_1 - Y_1\|^2
\big]
\leq
\tfrac{Cd}{M}.
\end{equation}
The proof of Theorem \ref{thm-mc-error-bound} is completed.
\end{proof}
As an immediate corollary, we get the computational complexity of the sampler to achieve a prescribed accuracy.
\begin{proposition}
\label{prop:mixing-time}
    Suppose Assumptions \ref{ass-abso-cont}, \ref{ass-RN-lip} and \ref{ass-f-3continuous} hold. 
    To achieve a given precision level $\epsilon > 0$ 
    in the $L^2$-Wasserstein distance, 
    a required number of iterations of the algorithm (\ref{eq-srk-mc-appr}) is of order 
    $\mathcal{O} \big(\tfrac{d}{\epsilon^{2/3}}(\ln\tfrac{d^{3/2}}{\epsilon})^{2/3}\big)$,
    while the number of samples required for the Monte Carlo estimator 
    is of order
    $\mathcal{O} 
    \big( 
    \tfrac{d}{\epsilon^2} 
    \big)$.
\end{proposition}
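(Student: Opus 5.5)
The statement to prove is Proposition \ref{prop:mixing-time}, a direct corollary of Theorem \ref{thm-mc-error-bound}. The plan is to split the target accuracy $\epsilon$ evenly between the two error sources in
\[
\mathcal{W}_2\big(\text{Law}(\widetilde{Y}_1),\mu\big)\le C(dh)^{3/2}|\ln h| + C\sqrt{d/M},
\]
and require each term to be at most $\epsilon/2$. The Monte Carlo term is immediate: $C\sqrt{d/M}\le \epsilon/2$ holds as soon as $M\ge 4C^2 d/\epsilon^2$. This gives $M=\mathcal{O}(d/\epsilon^2)$, uniformly in the step index, because Lemma \ref{lem-mc-error} holds for every $(t,x)$.

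For the discretization term we want $N=1/h$ with $(dh)^{3/2}|\ln h|\le \epsilon/(2C)$. Equivalently, $dh\,|\ln h|^{2/3}\le c\,\epsilon^{2/3}$ with $c=(2C)^{-2/3}$. We make the ansatz
\[
h=\frac{c'\,\epsilon^{2/3}}{d\,L^{2/3}},\qquad L:=\ln\!\big(d^{3/2}/\epsilon\big),
\]
with $c'$ to be chosen. This is the natural guess, since $|\ln h|\approx \ln(d/\epsilon^{2/3})=\tfrac23\ln(d^{3/2}/\epsilon)$. We assume $\epsilon$ is small relative to $d^{3/2}$, say $d^{3/2}/\epsilon\ge e$, so that $L\ge1$ and $h<1$.

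We then verify the ansatz. First,
\[
|\ln h|=\ln(d/\epsilon^{2/3})+\tfrac23\ln L+\ln(1/c')=\tfrac23 L+\tfrac23\ln L+\ln(1/c').
\]
Using $\ln L\le L$ and $L\ge1$, this gives $|\ln h|\le C''L$ for a constant $C''$ depending only on $c'$. Hence
\[
dh\,|\ln h|^{2/3}\le c'\,\epsilon^{2/3}L^{-2/3}(C''L)^{2/3}=c'(C'')^{2/3}\epsilon^{2/3}.
\]
The quantity $c'(C'')^{2/3}$ tends to $0$ as $c'\downarrow0$, because $C''$ grows only logarithmically in $1/c'$. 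So $c'$ can be fixed small enough that $c'(C'')^{2/3}\le c$. The number of iterations is then
\[
N=\lceil 1/h\rceil=\mathcal{O}\big(d\,\epsilon^{-2/3}L^{2/3}\big),
\]
which is exactly the claimed order.

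The only mildly delicate step is this self-consistency check of the logarithm, namely that $|\ln h|$ is comparable to $\ln(d^{3/2}/\epsilon)$ for the chosen $h$, together with the constraint $h\le1$, i.e.\ $N\ge1$. Both are handled by restricting to the nontrivial regime $\epsilon\lesssim d^{3/2}$. Otherwise $N=1$ already suffices up to constants, since $|\ln h|$ is bounded away from the singular regime. Note also that $M$ enters the cost per iteration but not the iteration count.
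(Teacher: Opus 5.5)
Your proposal is correct and follows essentially the same route as the paper: you split $\epsilon$ evenly between the two terms of Theorem \ref{thm-mc-error-bound}, read off $M\ge 4C^2d/\epsilon^2$, and invert $h^{3/2}|\ln h|$ by taking $1/h\asymp K^{1/3}(\ln K)^{2/3}$ with $K=d^3/\epsilon^2$. Your direct check that $|\ln h|\le C''L$ for a small constant $c'$ replaces the paper's elementary fact that $x^3/(\ln x)^2\ge K$ whenever $x\ge K^{1/3}(\ln K)^{2/3}$, and it is slightly more careful about absorbing $C$ and about restricting to the meaningful regime $\epsilon\lesssim d^{3/2}$.
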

Its proof is deferred to Section \ref{appendix_mixing-time}.
In Table \ref{tab:comparison}, we compare our results with \cite{huang2025schrodinger,wang2025multimodal}, in terms of error
bounds, the number of iterations of SRKSFS and the number of samples used in the Monte Carlo estimator of the drift required to achieve the accuracy
tolerance $\epsilon$ in $\mathcal{W}_2$ distance. 
\begin{table}[h]
\caption{A comparison of non-asymptotic error bounds in $\mathcal{W}_2$-distance.}
\vspace{-2pt} 
\centering
\renewcommand{\arraystretch}{1.5} 
\footnotesize 
\begin{tabular}{ccccccc}
\Xhline{1pt} 
 & \makecell[c]
 {Strong \\ convexity} 
 & \makecell[c]{Additional\\ condition\textsuperscript{1}}
 & \makecell[c]{Error\\ bound} 
 & \makecell[c]{{ Number of} \\{  iterations}}
 & \makecell[c]{{ Number of}
 \\{ MC samples}}
 \\ \hline
\cite{huang2025schrodinger} 
& Yes
& No 
& $\mathcal{O}(\sqrt{dh})+\mathcal{O}(\sqrt{d/M})$
& { 
$\mathcal{O}(d/\epsilon^{2})$}
& { $\mathcal{O}(d^2/\epsilon^{4})$ }
\\ 
\cite{wang2025multimodal}
& No 
& Yes 
& $\mathcal{O}(dh)+\mathcal{O}(\sqrt{d/M})$ 
& { $\mathcal{O}(d/\epsilon)$}
& { $\mathcal{O}(d/\epsilon^{2})$}\\
This work 
& No 
& Yes
& 
$\mathcal{O}((dh)^{3/2}| \ln h|)+\mathcal{O}(\sqrt{d/M})$
& $\mathcal{O} (d/\epsilon^{2/3}(\ln\tfrac{d^{3/2}}{\epsilon})^{2/{3}})$
& $\mathcal{O}(d/\epsilon^2)$
\\
\Xhline{1pt} 
\end{tabular}
\label{tab:comparison}
\caption*{\textsuperscript{1}
Smoothness assumptions other
than the Lipschitz condition for the drift.}
\end{table}

\section{Numerical experiments for SRKSFS: known densities}
\label{sec:experiments}
\noindent
To evaluate the efficiency and quality of the proposed sampling algorithms, 
we conduct a series of numerical experiments in this section. 
Given our focus on multimodal distributions, 
we first examine several bivariate Gaussian mixture models where the modes can be easily visualized. 
Subsequently, we study more complex multivariate distributions of arbitrary dimensions using copula modeling \cite{nelsen2006introduction}. 
Finally, we test our methods on deep generative models for image synthesis.

\subsection{Gaussian Mixture Models}
\label{subsec:Gaussian-Mixture-Models}
In this subsection, we consider sampling from three types of bivariate Gaussian mixture distributions. 
The general form of a Gaussian mixture density is given by:
\begin{equation}
    p(x) = \sum_{i=1}^{\kappa} \theta_{i} \mathcal{N}(x; \alpha_{i}, \Sigma_{i}), \quad \sum_{i=1}^{\kappa} \theta_{i}=1 \quad \text{and} \quad 0 \leq \theta_{i} \leq 1, \quad i=1,\ldots,\kappa,
\end{equation}
where $\alpha_i\in \mathbb{R}^d$ and $\Sigma_i\in \mathbb{R}^{d \times d}$ are the mean and covariance matrix of Gaussian component, respectively.
\paragraph{Gaussian Circle}
First, we test the Gaussian Circle distribution with $\kappa = 8$ components. 
The means are located at $\alpha_i = 4(\cos(2\pi(i-1)/8), \sin(2\pi(i-1)/8))$, 
and the covariance matrices are ${\Sigma}_{i} = 0.3 \mathbf{I}_{2}$ for $i = 1, \dots, 8$. 
Figure \ref{fig-circle-samp} displays scatter plots obtained with different step sizes.
Numerical results demonstrate that the stochastic Runge-Kutta scheme achieves accurate sampling even with larger step sizes, 
whereas the Euler-Maruyama scheme requires significantly smaller step sizes to achieve comparable accuracy. 
\begin{figure}[htbp]
    \centering
    \includegraphics[width=\linewidth]{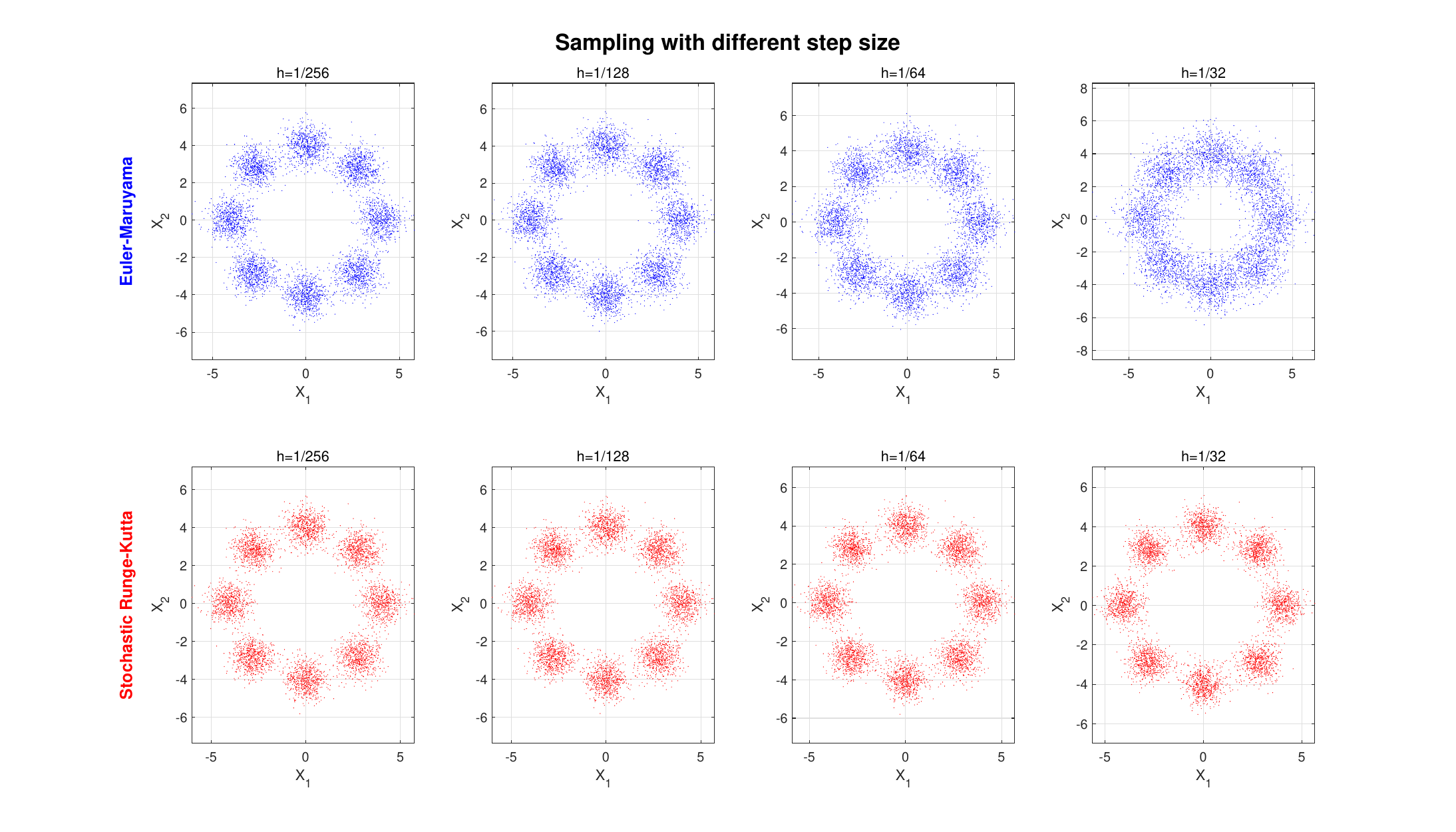}
    \caption{Sampling of Gaussian Circle with different step sizes.}
    \label{fig-circle-samp}
\end{figure}
 
Also, we compute the root mean square error for various step sizes from $h = 2^{-10}$ to $h = 2^{-5}$, 
and the convergence rates are shown in Figure \ref{fig-circle-cov}. Here we set the solution with step size $h = 2^{-13}$ as the exact reference. 
It is shown that, the convergence of the Euler-Maruyama method with order $1$ deteriorates when the step size is large ($h = 1/64$), but the stochastic Runge-Kutta scheme still maintains order $1.5$.
\begin{figure}[htbp]
    \centering
    \includegraphics[width=0.8\linewidth]{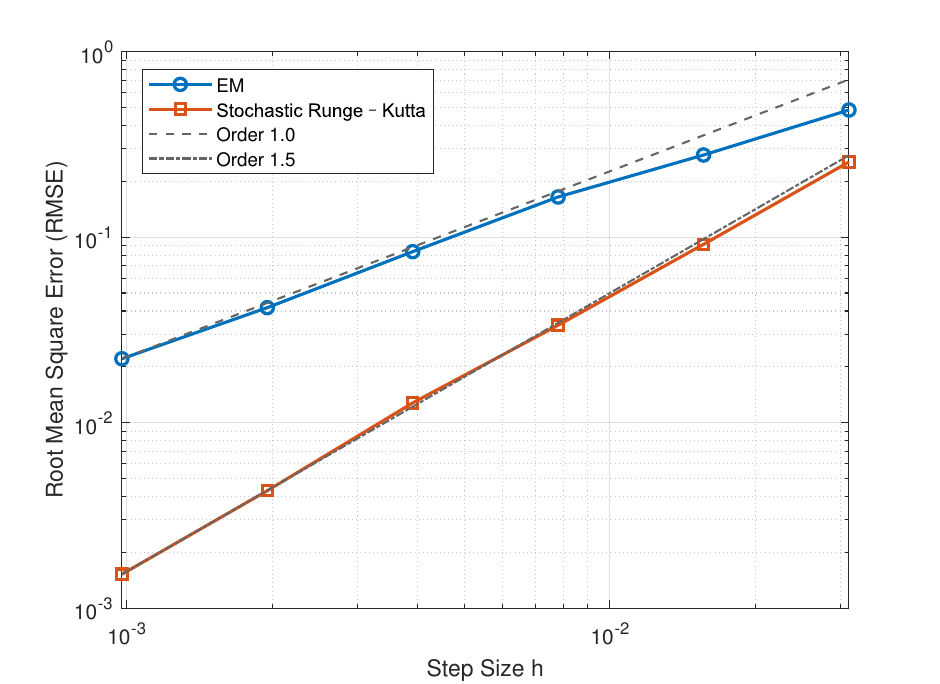}
    \caption{Mean-square convergence rates of Gaussian Circle with different schemes.}
    \label{fig-circle-cov}
\end{figure}

\paragraph{Gaussian Cross}
As the second example, we examine the Gaussian Cross distribution, 
a mixture model composed of eight Gaussian components arranged in a cruciform pattern. 
The mixture is constructed by positioning components at four central locations $\alpha_i \in \{(\pm 1.5, 0), (0, \pm 1.5)\}$. 
At each location, we define two components sharing the same mean but possessing distinct covariance structures, given by
${\Sigma}_{i} = 
\begin{pmatrix}
      1 & \pm0.9 \\
      \pm0.9 & 1
\end{pmatrix}
$.
Here, the off-diagonal elements are chosen to be either $+0.9$ or $-0.9$, 
generating strongly correlated components with opposing orientations at each positional mean. 
All components are assigned equal mixing weights $\theta_i = 1/8$. 
This configuration creates a challenging multimodal target, testing the ability of the sampling algorithm to traverse between modes separated along the coordinate axes and to accurately capture the distinct correlation structures within each mode.

The mean-square convergence rates of the Euler-Maruyama scheme 
and the stochastic Runge-Kutta scheme are shown in Figure \ref{fig-cross-cov}, 
which align with the theoretical expectations.
\begin{figure}[htbp]
    \centering
    \includegraphics[width=0.8\linewidth]{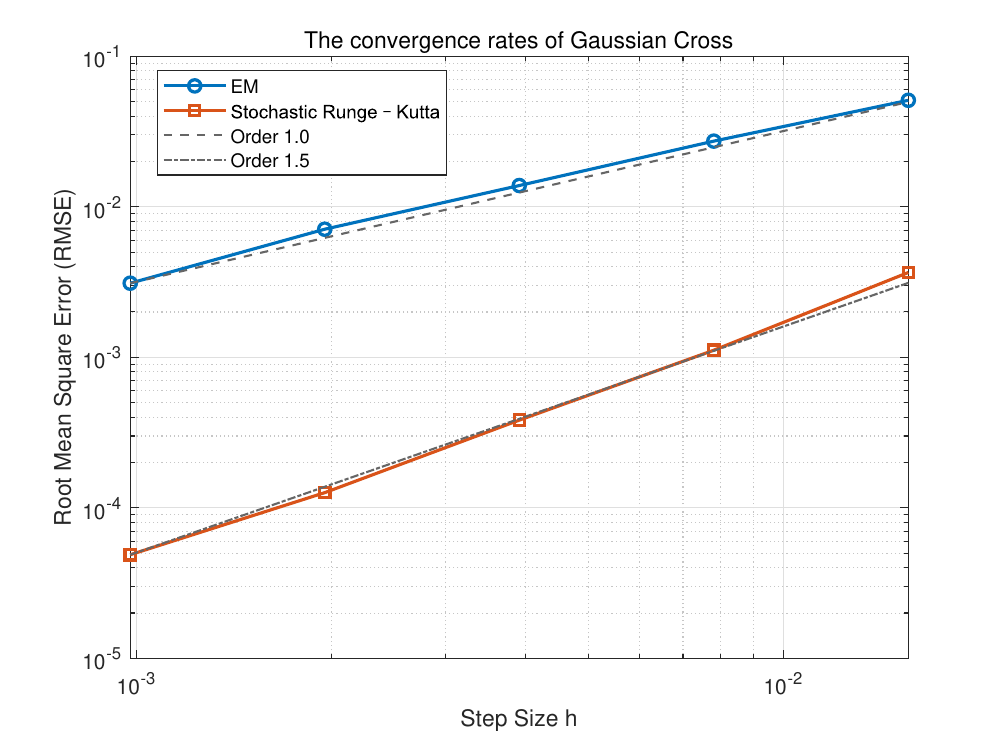}
    \caption{Mean-square convergence rates of Gaussian Cross under different schemes.}
    \label{fig-cross-cov}
\end{figure}
Figure \ref{fig-cross-samp} displays scatter plots and contours for different dimensions obtained using different sampling methods, 
showing that the SFS method accurately captures the structural characteristics.
\begin{figure}[htbp]
    \centering
    \includegraphics[width=\linewidth]{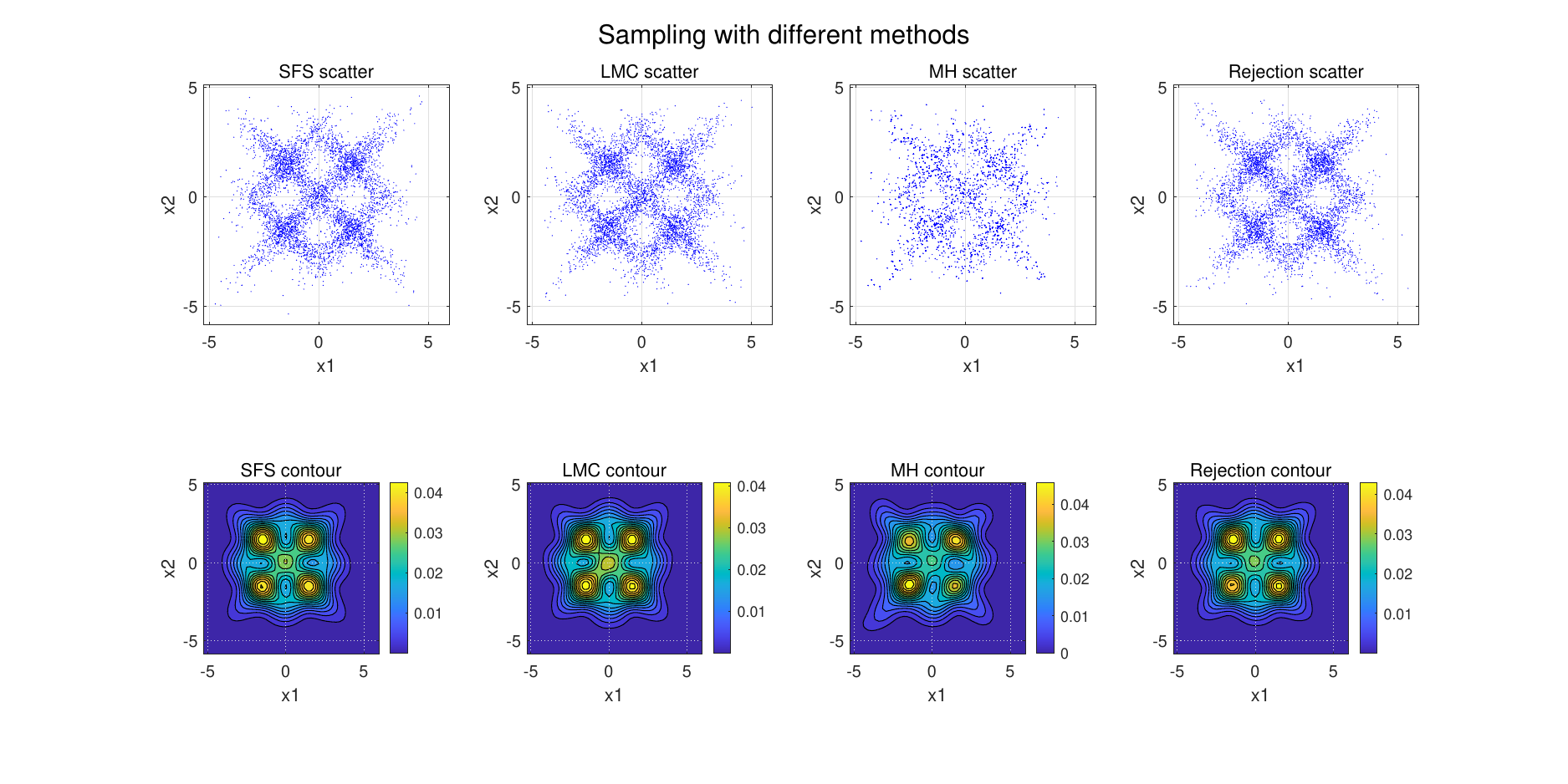}
    \caption{Sampling of Gaussian Cross using different algorithms.}
    \label{fig-cross-samp}
\end{figure}

\paragraph{Circular Gaussian Mixture}
Finally, we examine the Circular Gaussian mixture distribution, defined by the density:

\begin{equation}
p(x) = \tfrac{1}{3} \sum_{i=1}^{3} \tfrac{1}{2 \pi R_{i} \sigma \sqrt{2 \pi}} \exp \Big( -\tfrac{ ( r - R_{i} )^{2} }{ 2 \sigma^{2} } \Big),
\end{equation}
where $r = 
\sqrt{ x^{2} + y^{2} }$, $R = [1,2,3]$, 
and $\sigma = 0.1$. 
This model describes a system comprising three concentric rings, 
serving as a standard test case for multimodal sampling algorithms.
Figure \ref{fig-ring-samp} displays scatter plots and marginal distributions along different dimensions obtained using various sampling methods. 
Numerical results also show that the SFS method approximates the exact distribution well and outperforms the other algorithms.
\begin{figure}[htbp]
    \centering
    \includegraphics[width=\linewidth]{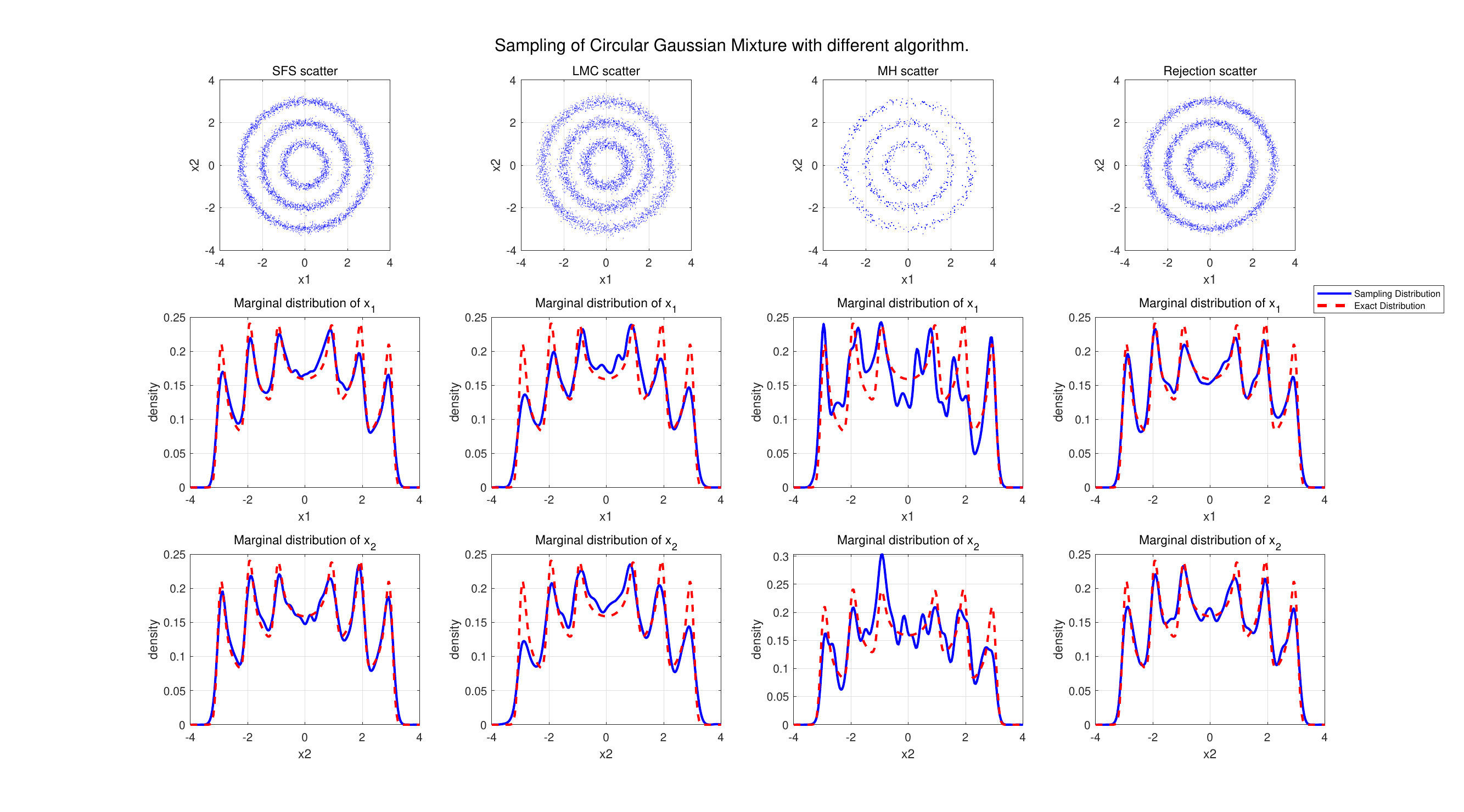}
    \caption{Sampling of the Circular Gaussian Mixture using different algorithms.}
    \label{fig-ring-samp}
\end{figure}

\subsection{Copula-Generated Distributions}

To study more general multimodal distributions in higher dimensions, 
we come to copula modeling \cite{nelsen2006introduction}. 
Copulas provide a flexible framework for constructing multivariate distributions with arbitrary marginal distributions and dependency structures, 
making them ideal for evaluating sampling algorithms on complex, high-dimensional targets.

For ${x} = (x_1, \dots, x_d)^\top \in \mathbb{R}^d$, 
the joint density $p({x})$ is defined via a Clayton copula with parameter $\theta = 2 $ 
and identical bimodal Gaussian mixture marginals on each dimension. 
This parameter choice induces moderate to strong positive dependence between variables, 
particularly in the lower tail, 
creating a challenging sampling scenario with complex correlation structures.

For each dimension $i = 1, \cdots, d $, 
the marginal cumulative distribution function (CDF) is:
\begin{equation}
u_i = F_{X_i}(x_i) = \sum_{k=1}^{2} w_k  \Phi \left( \tfrac{x_i - \mu_k}{\sigma_k} \right), 
\end{equation}
and the marginal probability density function (PDF) is:
\begin{equation}
f_{X_i}(x_i) = \sum_{k=1}^{2} w_k  \tfrac{1}{\sigma_k} \phi \left( \tfrac{x_i - \mu_k}{\sigma_k} \right),
\end{equation}
where $\Phi(\cdot)$ and $\phi(\cdot)$ denote the standard normal CDF and PDF, respectively, 
and the mixture parameters are identical across all dimensions:
\begin{equation}
w_1 = 0.7,\quad w_2 = 0.3,\quad \mu_1 = -1,\quad \mu_2 = 1,\quad \sigma_1 = \sigma_2 = 0.2.
\end{equation}
The $d$-dimensional Clayton copula density is:
\begin{equation}
c_\theta(u_1, \dots, u_d) = (1 + \theta)^{d - 1}
\left( \prod_{i=1}^d u_i^{-(1 + \theta)} \right)
\left( \sum_{i=1}^d u_i^{-\theta} - d + 1 \right)^{-\left( \frac{1}{\theta} + d \right)}.
\end{equation}
Consequently, the full joint density is:
\begin{equation}
p(\mathbf{x}) = c_\theta\big(F_{X_1}(x_1), \dots, F_{X_d}(x_d)\big) \cdot \prod_{i=1}^d f_{X_i}(x_i).
\end{equation}

This construction yields a distribution with complex multimodal structure. 
While each marginal distribution has two modes, 
the Clayton copula induces asymmetric dependence that creates intricate interactions between dimensions. 
The resulting joint distribution exhibits multiple concentration regions whose exact count depends on both the marginal bimodality and the copula parameter $\theta$. 
These concentration regions are not simply the product of marginal modes due to the nonlinear dependence structure, 
making this distribution particularly challenging for sampling algorithms that struggle with complex correlations.
For the low dimension $d = 2$, Figure \ref{fig-clayton-samp-2} shows sampling results for four algorithms and
all successfully sample from the density.
\begin{figure}[htbp]
    \centering
    \includegraphics[width=\linewidth]{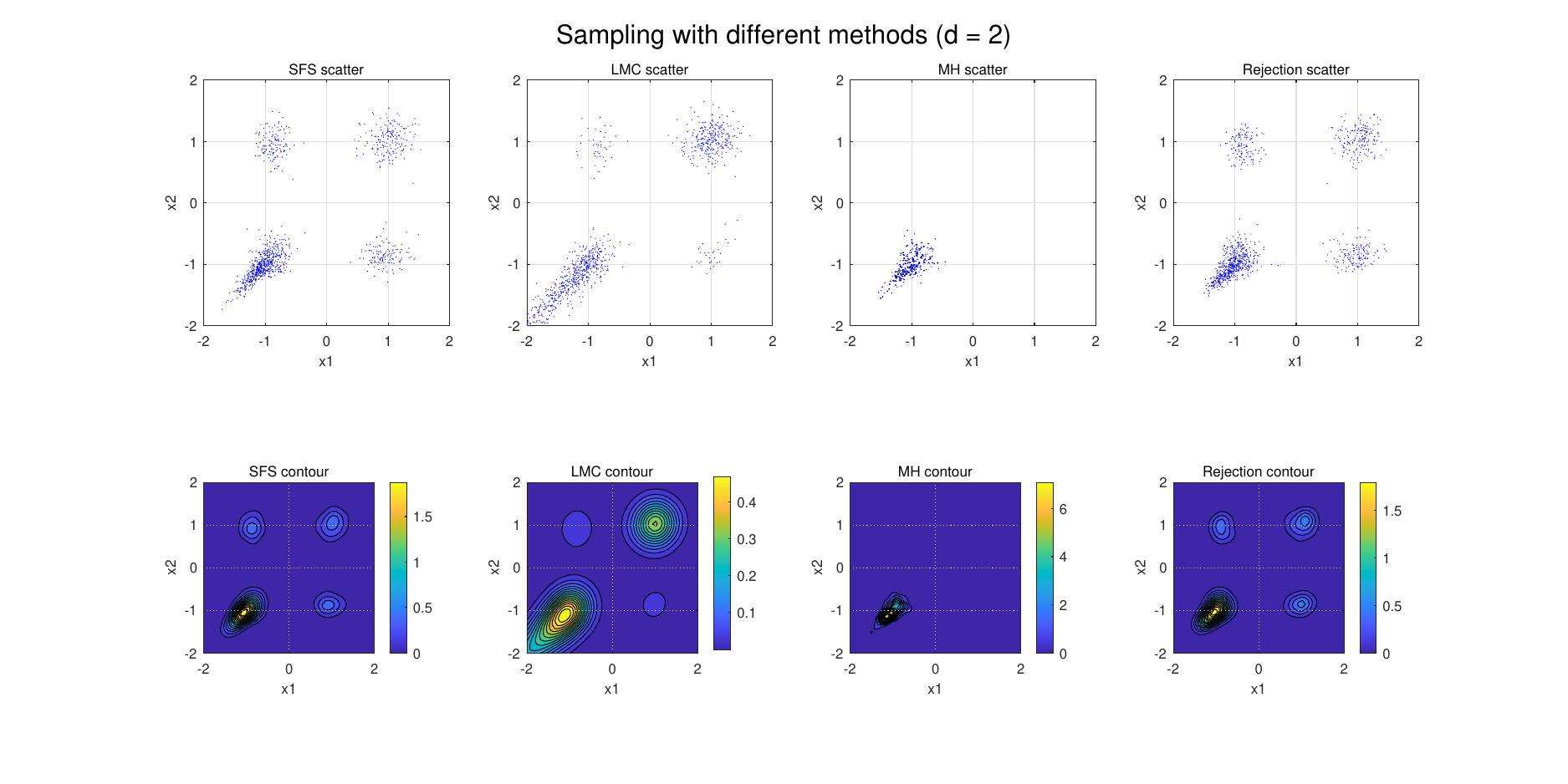}
    \caption{Sampling of the Clayton copula ($d = 2$) using different algorithms.}
    \label{fig-clayton-samp-2}
\end{figure}
However, when the dimension increases to $d = 5$, 
sampling becomes difficult.
The acceptance rate for rejection sampling drops sharply to $1.5\times 10^{-4}$, 
requiring significant computational time.
\begin{figure}[htbp]
    \centering
    \includegraphics[width=\linewidth]{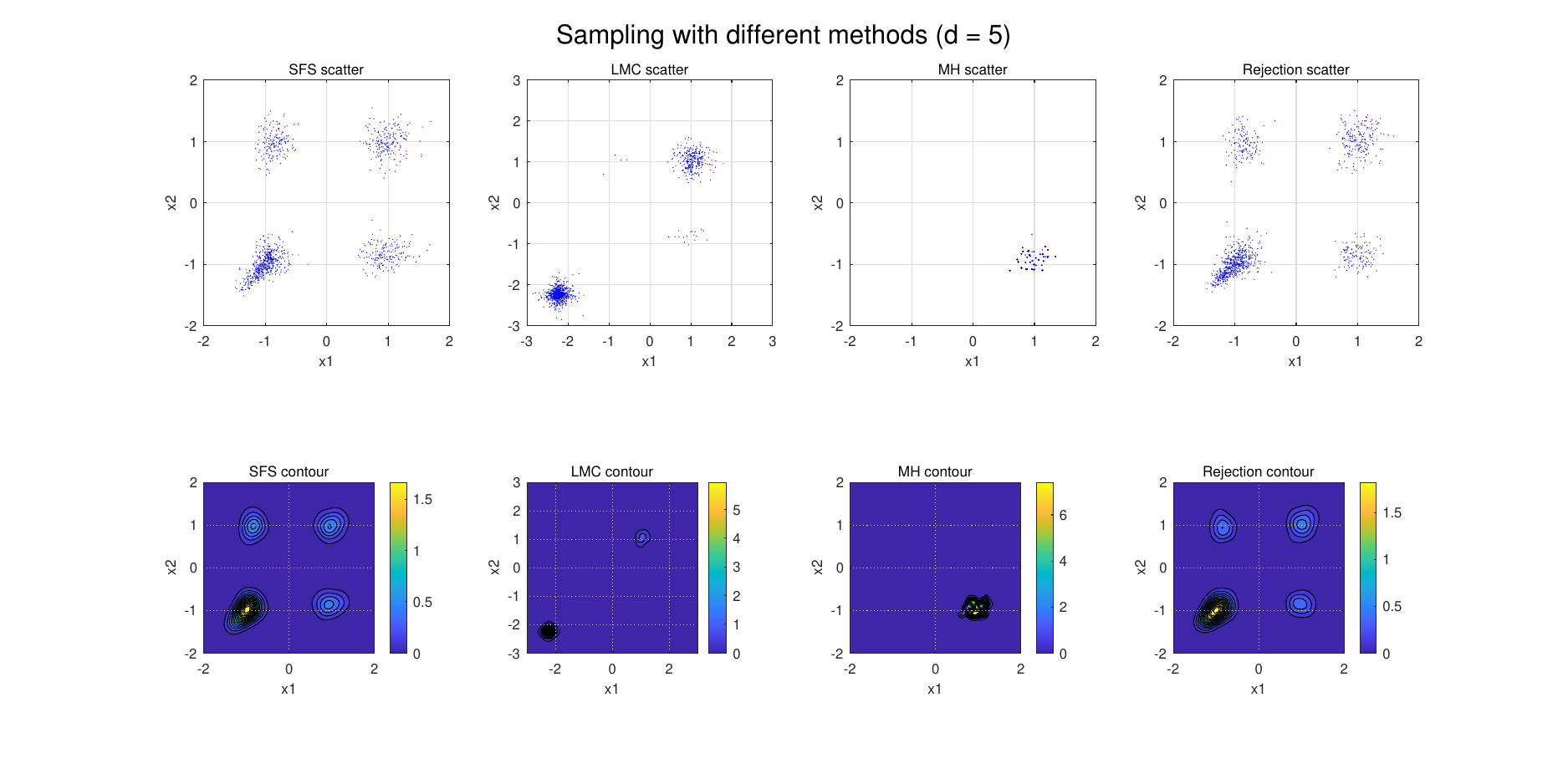}
    \caption{Sampling of the Clayton copula ($d = 5$) using different algorithms.}
    \label{fig-clayton-samp-5}
\end{figure}
Figure \ref{fig-clayton-samp-5} displays pairwise scatter plots and marginal densities for selected dimensions.
The SRKSFS \eqref{eq-srk-mc-appr} successfully captures the complex multimodal structure and preserves the asymmetric dependence induced by the Clayton copula. 
In contrast, the Langevin dynamics sampler fails to adequately explore regions of high density, 
particularly in the lower tail where dependence is the strongest.

\subsection{Deep Generative Models}
Modern deep learning techniques have attracted enormous attention from statistical researchers 
and practitioners, among which deep generative models are a class of important unsupervised learning methods 
\cite{salakhutdinov2015learning,bond2021deep}. 
Deep generative models attempt to model the statistical distribution of high-dimensional data using DNNs, 
with wide applications in image synthesis and text generation.

One general class of deep generative models has the form $X = G(Z)$, 
where $X\in \mathbb{R}^p$ is the high-dimensional data point, 
for example, an image, 
$Z\in \mathbb{R}^d$ is a latent random vector with $d \ll p$, 
and $G:\mathbb{R}^d \to \mathbb{R}^p$ is a DNN generator.
The distribution of $Z$ is characterized by an energy function $E:\mathbb{R}^d \to \mathbb{R}$, 
which is also a DNN. 
The pair $(E_d, G_{d,p})$ thus defines a deep generative model, 
where the subscripts $d$ and $p$ indicate the dimensions\cite{che2020your,pang2020learning}.

In this section, the functions $E$ and $G$ are supposed to be known and we focus on the sampling of $p(z) \varpropto  \exp (-E(z) )$, 
as it is the key to generating new data points of $X$.
We consider generative models for the Fashion-MNIST dataset 
\cite{xiao2017fashion}, 
which contains 60,000 training images and 10,000 test images, 
each consisting of $28 \times 28$ grey-scale pixels. 
We implement two different latent space configurations: 
a severely constrained model with $d = 2$ and a more expressive model with $d = 32$.

Figure \ref{fig-fashion-mnist-2} shows samples generated after sampling from the latent space $d=2$, 
where all algorithm successfully captures the multimodal structure of the 2D latent space, 
as evidenced by the distinct clusters in the latent space visualization.
\begin{figure}[htbp]
    \centering
    \begin{subfigure}[b]{0.32\textwidth}
        \includegraphics[width=\textwidth]{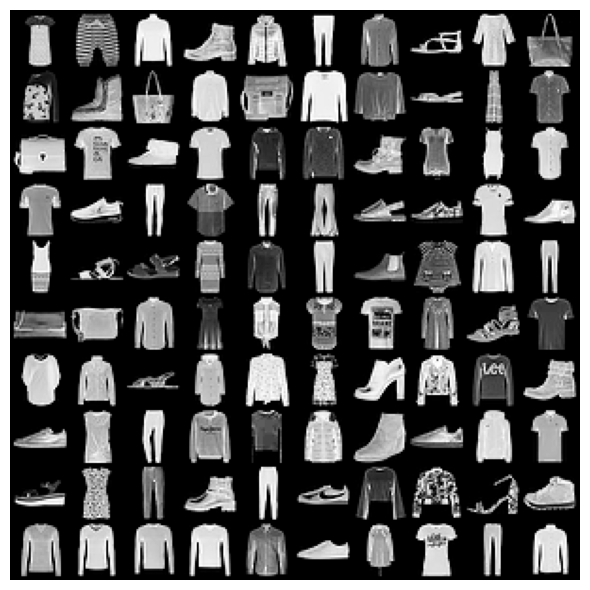}
        \caption{Original Samples}
    \end{subfigure}
    \hfill
    \begin{subfigure}[b]{0.32\textwidth}
        \includegraphics[width=\textwidth]{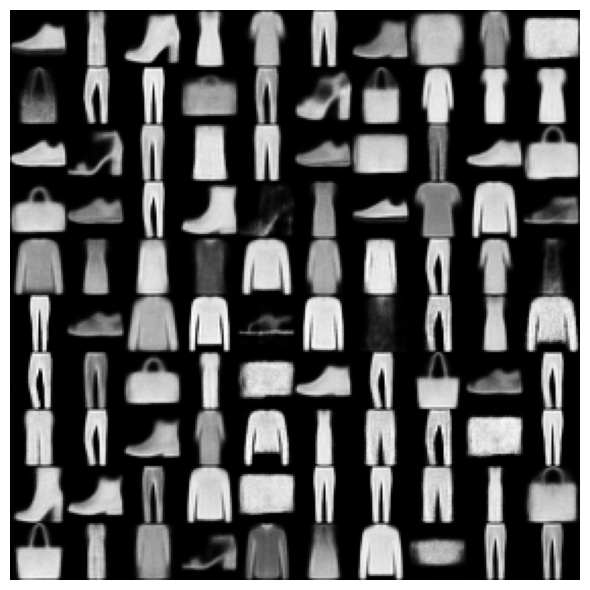}
        \caption{SFS Samples}
    \end{subfigure}
        \hfill
    \begin{subfigure}[b]{0.32\textwidth}
        \includegraphics[width=\textwidth]{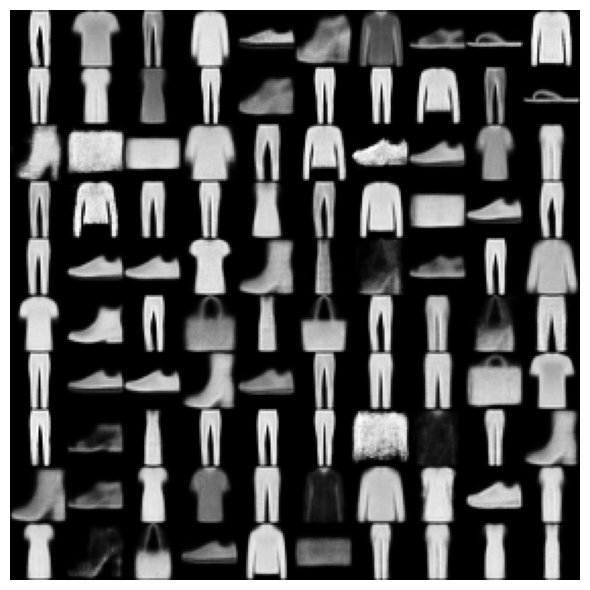}
        \caption{Langevin Samples}
    \end{subfigure}
    \caption{Images generated by a deep generative model with latent dimension $d = 2$.}
    \label{fig-fashion-mnist-2}
\end{figure}
While for the latent space $d=32$, as shown in Figure \ref{fig-fashion-mnist-32}, 
the advantage of the stochastic Runge-Kutta scheme becomes more pronounced.
\begin{figure}[htbp]
    \centering
    \begin{subfigure}[b]{0.32\textwidth}
        \includegraphics[width=\textwidth]{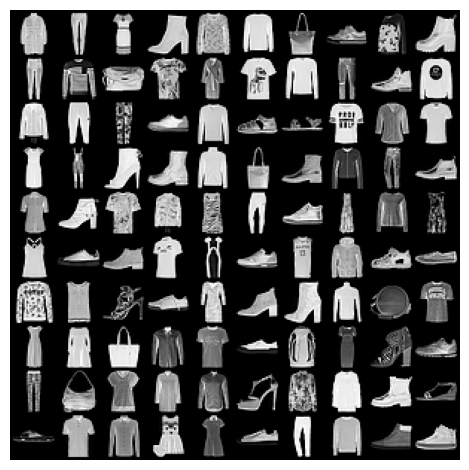}
        \caption{Original Samples}
    \end{subfigure}
    \hfill
    \begin{subfigure}[b]{0.32\textwidth}
        \includegraphics[width=\textwidth]{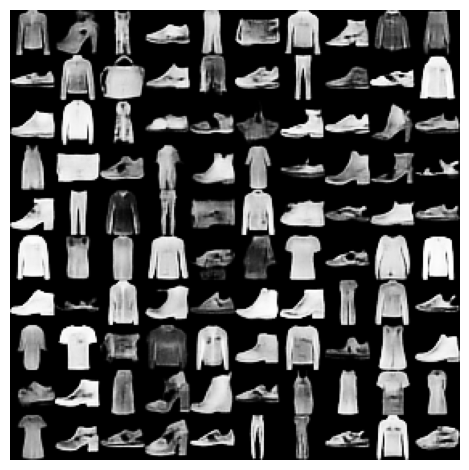}
        \caption{SFS Samples}
    \end{subfigure}
        \hfill
    \begin{subfigure}[b]{0.32\textwidth}
        \includegraphics[width=\textwidth]{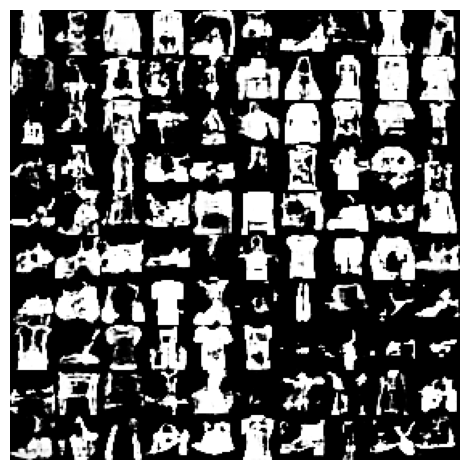}
        \caption{Langevin Samples}
    \end{subfigure}
    \caption{Images generated by a deep generative model with latent dimension $d = 32$.}
    \label{fig-fashion-mnist-32}
\end{figure}


\section{Data-driven Schr{\"o}dinger-F{\"o}llmer generation with empirical measures}
\label{sec:Data-driven-generation}
\subsection{Schr{\"o}dinger-F{\"o}llmer process with empirical measures}
In many real-world sample generation tasks, 
the target distribution $\mu(x)$ for $x\in \R^d$ is typically unknown,  
but finite number of independent samples  $\{\eta^{(i)}\}^{n}_{i=1} \sim \mu$ are provided. 
In this setting, can one extend the previously proposed sampler to generate new samples?
To answer this question, we recast the Radon-Nikodym derivative $g_\beta$ defined by \eqref{eq-sfs-g-beta} as follows: 
\[
    g_{\beta}(x)
    :=
    \tfrac{\mathrm{d}\mu}
    {\mathrm{d} \mathcal{N} (0,\beta\,\mathbf{I}_d)}(x)
    =
    \tfrac{(2\pi\beta)^{d/2}}{C}
    p(x)
    \exp 
    \big(
    \tfrac{\|x\|^2}{2\beta}
    \big), 
    \quad x \in \mathbb{R}^d.
\]
Using the change of variable $\eta = x+\sqrt{(1-t)\beta}\,\xi$, we obtain 
\begin{equation}
\begin{aligned}
& 
\mathbb{E}_{\xi \sim \gamma^{d}} 
\big[ \xi\,  g_{\beta}(x+\sqrt{(1-t)\beta}\,\xi ) \big] \\
& = 
\tfrac{\beta^{d/2}}{C}
\int_{\mathbb{R}^d}
\xi \,
p(\eta)
\exp
\big(\tfrac{\|\eta\|^2}{2\beta}
\big)
\exp
\big(-\tfrac{\|\xi\|^2}{2}
\big)
\,
\mathrm{d}\xi \\
& = 
\tfrac{\beta^{d/2}}{C}
\int_{\mathbb{R}^d}
\tfrac{\eta - x}{\sqrt{(1-t)\beta}} 
\exp\big(
\tfrac{\|\eta\|^2}{2\beta}
-\tfrac{\|\eta - x\|^2}{2{(1-t)\beta}}
\big)
\cdot
p(\eta)
\tfrac{\mathrm{d}\eta}{((1-t)\beta)^{d/2}} \\
& =
\tfrac{\beta^{d/2}}
{C((1-t)\beta)^{(d+1)/2}}
\mathbb{E}_{\eta \sim \mu } 
\left[ (\eta-x)  
\exp \big(
\tfrac{\|\eta\|^2}{2\beta}
-\tfrac{\|\eta-x\|^2}{2(1-t)\beta}
\big)
\right],
\end{aligned}
\end{equation}
and 
\begin{equation}
\begin{aligned}
\mathbb{E}_{\xi \sim \gamma^{d}} 
\left[  g_{\beta}
(x+\sqrt{1-t} \, \xi ) \right] 
=& 
\tfrac{\beta^{d/2}}{C((1-t)\beta)^{d/2}}
\mathbb{E}_{\eta \sim \mu } 
\left[  
\exp \big(
\tfrac{\|\eta\|^2}{2\beta}
-\tfrac{\|\eta-x\|^2}{2(1-t)\beta}
\big)
\right] .
\end{aligned}
\end{equation}
Equipped with the above equations, the drift $f_\beta$ of Schr\"odinger-F\"ollmer process with temperatures (\ref{eq-sfs-exact-dens}) can be rewritten as follows: 
\begin{equation}
\begin{aligned}
\label{eq-sfs-drift-iid}
    f_\beta(t,x)
    &= 
    \frac
    { \mathbb{E}_{\eta \sim \mu} 
    \left[ (\eta-x)  
    \exp \big(
    \tfrac{\|\eta\|^2}{2\beta}
    -\tfrac{\|\eta-x\|^2}{2(1-t)\beta}
    \big)
    \right]}
    { (1-t)\mathbb{E}_{\eta\sim \mu} 
    \left[             
    \exp \big(
    \tfrac{\|\eta\|^2}{2\beta}
    -\tfrac{\|\eta-x\|^2}{2(1-t)\beta}
    \big) 
    \right] }.
\end{aligned}
\end{equation}
This drift offers a powerful advantage in the data-driven setting 
because the expectations are taken with respect to the target distribution $\mu$.
More precisely, for $\mu$ being any given distribution in $\R^d$, and  $\{\eta^{(i)}\}^{n}_{i=1} \sim \mu$.
The drift $f_{\beta}(t,x) $
can be replaced by its empirical counterpart 
$\hat{f}_{\beta}^{M}$:
\begin{equation}
\begin{aligned}
\label{eq-sfs-mc-iid}
\hat{f}_{\beta}^{M}(t,x)
&= \frac{
\sum\limits_{j=1}^{M} \left[ (\eta^{(j)} - x)\cdot
\exp \big(
\tfrac{\|\eta^{(j)}\|^2}{2\beta}
-\tfrac{\|\eta^{(j)}-x\|^2}{2(1-t)\beta}
\big)
\right]}
{(1-t)\sum\limits_{j=1}^{M}
\left[
\exp \big(
\tfrac{\|\eta^{(j)}\|^2}{2\beta}
-\tfrac{\|\eta^{(j)}-x\|^2}{2(1-t)\beta}
\big)
\right]},
\quad \eta^{(j)} \sim \mu.
\end{aligned}
\end{equation}
Using this approximation, we propose the following data-driven sampler: 
\begin{equation}
\label{eq:sfs-data}
\begin{aligned}
\hat{Y}_{n+1}
&= 
\hat{Y}_{n} 
+ 
\tfrac{1}{3}
\hat{f}_{\beta}^{M}
(t_n, \hat{Y}_{n})h 
+ 
\tfrac{2}{3}
\hat{f}_{\beta}^{M}
\big(t_n+\tfrac{3}{4}h, \hat{H}_{n}\big)h+
\sqrt{\beta}
\Delta {W_n},
\end{aligned}
\end{equation}
where
\[
\hat{H}_{n} := \hat{Y}_{n} + \tfrac{3}{4}\hat{f}_{\beta}^{M}(t_n, \hat{Y}_{n})h + \tfrac{3\sqrt{\beta}\Delta {Z_n}}{2h}.
\]
We mention that this sampler does not require the training of the neural network to get sufficiently accurate score estimation and avoids the complexity
of building the network architecture, which is computationally more inexpensive.

In practice, 
the temperature parameter $\beta$ can be tuned to balance between exploration and exploitation. 
Larger values of $\beta$ introduce greater noise, 
facilitating exploration of the state space, 
while smaller values make the process more deterministic, 
focusing on mode-seeking behavior. 
The data-driven formulation provides a flexible framework for sampling from complex distributions using only empirical observations.

\subsection{Numerical experiments on data-driven sampling}

To validate the efficiency and sample quality of the proposed sampling framework 
in practical scenarios where explicit density forms of $\mu$ are unavailable, 
we conduct numerical experiments on data-driven problems.
In this subsection, we directly leverage the provided i.i.d. samples to generate new samples 
without intermediate density estimation or training of a score function. 
Our experiments evaluate the ability of the algorithm to capture complex data structures and produce high-quality samples directly from empirical distributions.

\paragraph{Low-dimensional distributions.}
First, we focus on two-dimensional "moons" and three-dimensional "S-curve" datasets.
The first example is sampled from a complicated distribution whose support is split into two disjoint regions of equal mass shaped like half-moon 
and the second example is a three-dimensional structure that forms a continuous, nonlinear manifold shaped like a twisted "S" \cite{pedregosa2011scikit}.
We generate $1000$ points as training samples, 
then apply the SRKSFS \eqref{eq:sfs-data} with a fixed step size $h=10^{-2}$ to generate new samples that resemble the original distribution. 
Figure \ref{fig-moons} and Figure \ref{fig-Scurve} visualize the results for the half-moon and S-curve datasets, respectively.
\begin{figure}[htbp]
    \centering
    \includegraphics[width=0.45\linewidth]{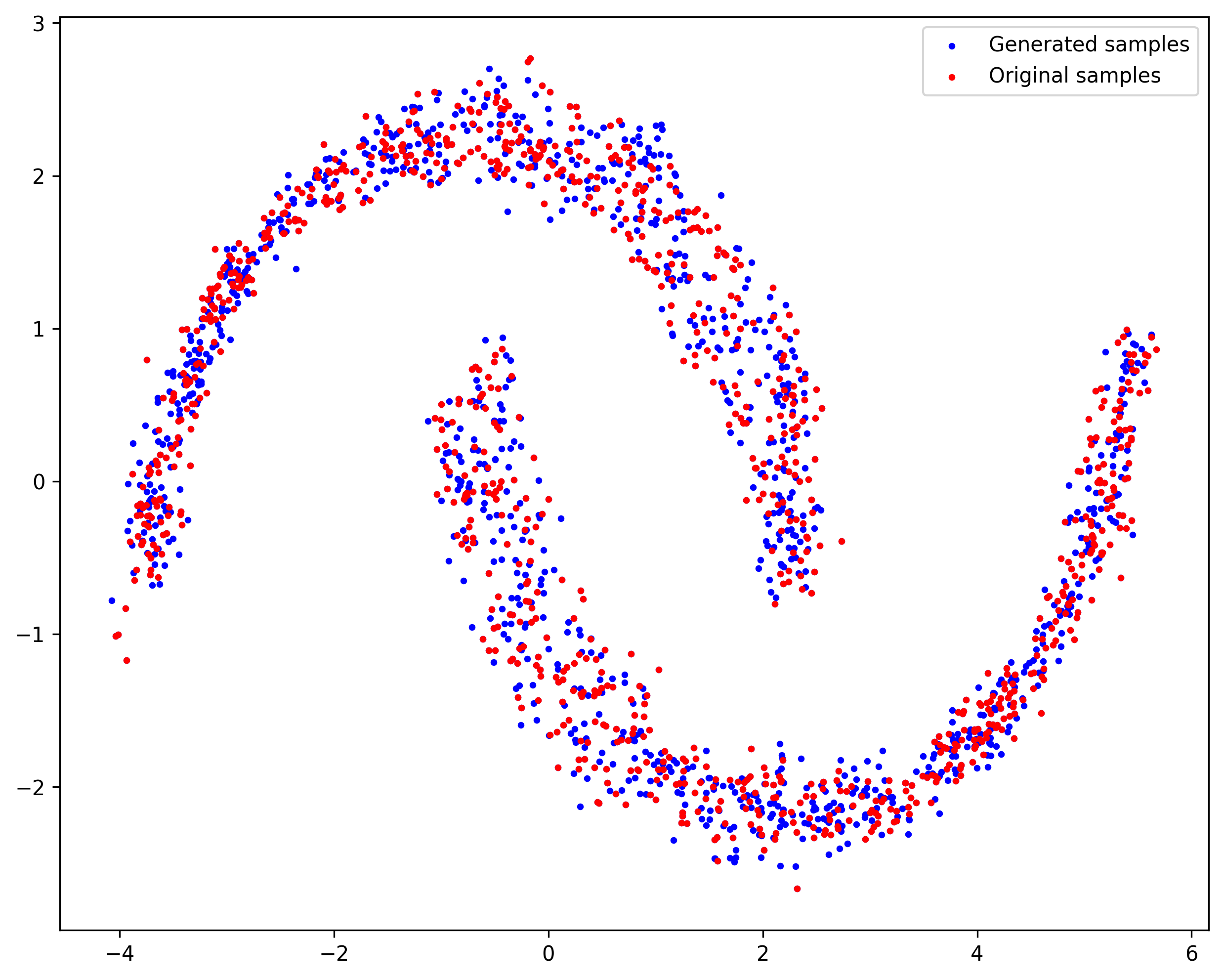}
    \caption{Sampling from Moons with SRKSFS.}
    \label{fig-moons}
\end{figure}

\begin{figure}[htbp]
    \centering
    \includegraphics[width=\linewidth]{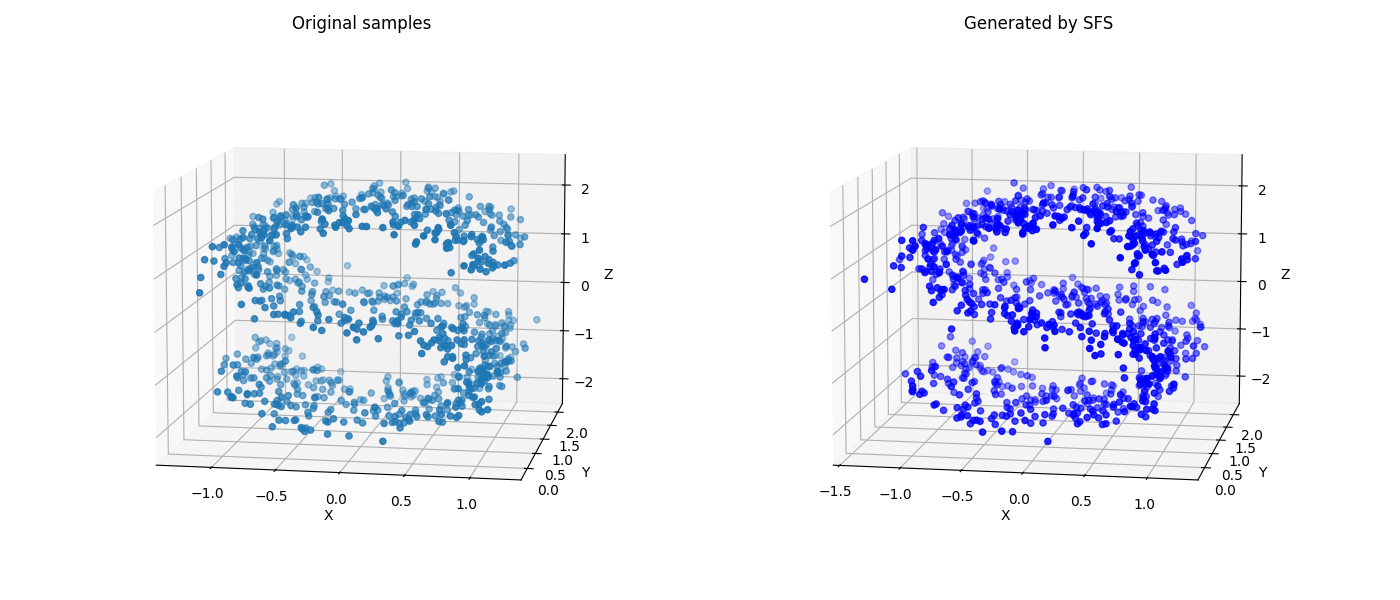}
    \caption{
    Sampling from S-curve with SRKSFS.}
    \label{fig-Scurve}
\end{figure}

\paragraph{Image generation from empirical distributions.}
To assess the scalability of our method in high-dimensional spaces, 
we consider image generation tasks using the MNIST \cite{deng2012mnist} and CIFAR-10 datasets \cite{krizhevsky2009learning}. 
In this setting, the target distribution is defined entirely by the training images, 
and our framework generates novel samples directly from the empirical distribution without density estimation or latent space normalization.

The MNIST data set consists of gray-valued digital images, 
each with $28 \times 28$ pixels showing one handwritten digit. 
The generated images for MNIST are presented on the right of Figure \ref{fig-mnist}, 
demonstrating that our method can synthesize high-fidelity images. 
\begin{figure}[htbp]
    \centering
    \includegraphics[width=\linewidth]{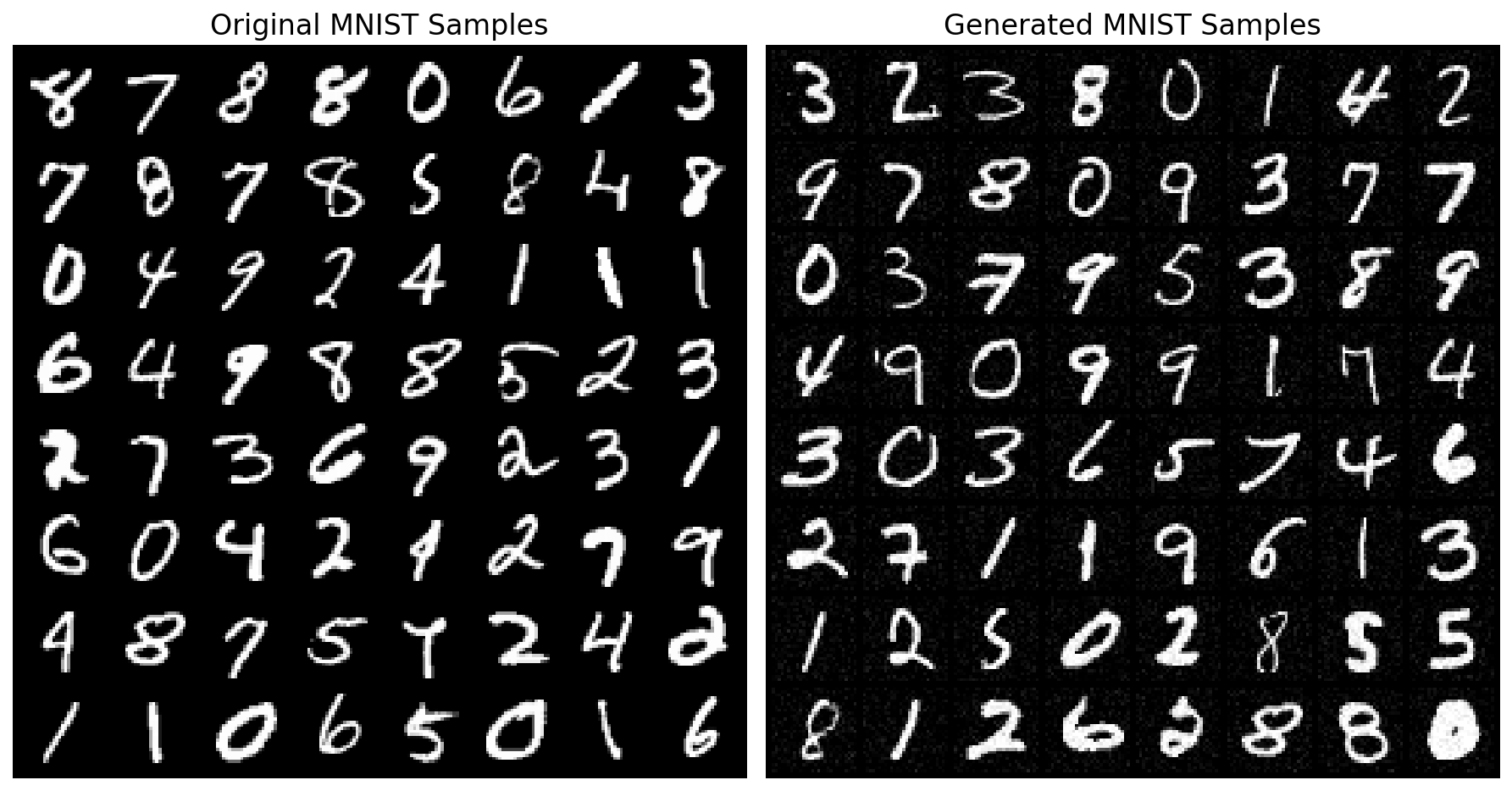}
    \caption{Samples and data from MNIST.}
    \label{fig-mnist}
\end{figure}
The CIFAR10 data set consists of $60000$ images in $10$ classes, with $6000$ images per class. 
The images are colored and of size $32 \times 32$ pixels. 
Figure \ref{fig-cifar10} shows generated samples (in right) alongside true samples (in left) from CIFAR-10. As the figure demonstrates, our algorithm successfully generates sharp, high-quality, 
and diverse samples in this high-dimensional image space, 
highlighting the effectiveness of direct sample-to-sample generation.
\begin{figure}[htbp]
    \centering
    \includegraphics[width=\linewidth]{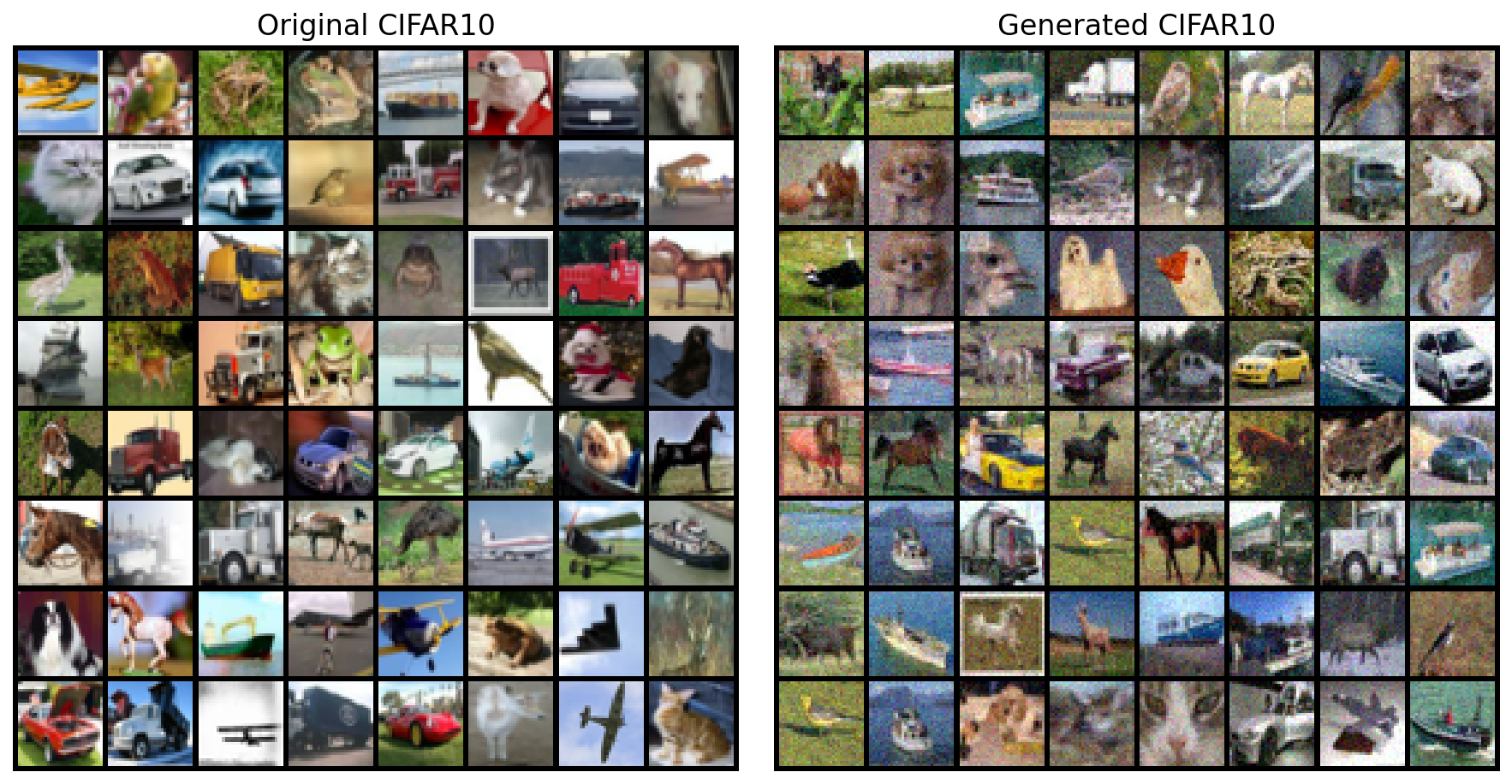}
    \caption{Samples and data from CIFAR-10.}
    \label{fig-cifar10}
\end{figure}


\section{Conclusion and future work}
\label{sec:Conclusion}
In this work, 
we introduce stochastic Runge-Kutta Schr{\"o}dinger-F{\"o}llmer samplers. 
Under mild smoothness assumptions on the drift, a convergence rate of order $\mathcal{O}((dh)^{3/2}| \ln h|)$ is established in $L^2$-Wasserstein distance, 
thereby providing a 
significant improvement over the Schr{\"o}dinger-F{\"o}llmer sampler based on Euler-Maruyama discretization  discussed in recent studies \cite{huang2025schrodinger,wang2025multimodal}.
%
%
As a future work, integrating the proposed higher-order discretization scheme with adaptive time-stepping strategies or multi-level Monte Carlo techniques could give further improvements in computational efficiency.


\bibliography{ref}
\appendix
\section{Proof of Proposition \ref{prop:g-assumption}}
\label{app:g-condition}
\begin{proof}
By assumptions, the function $g_\beta$ is of class $\mathcal{C}^4$, 
and moreover, 
$g_\beta, \nabla g_\beta, \nabla^2 g_\beta, \nabla^3 g_\beta$ are  all Lipschitz continuous.
Consequently, there exists a constant $L_g > 0$,
such that for any $x, y \in \mathbb{R}^d$,
\begin{equation}
   \| \nabla^k g_\beta(x)- \nabla^k g_\beta(y)\|
   \leq L_g \|x-y\|,
   \quad k=0,1,2,3.
\end{equation}
Following a similar argument as that in \cite[Appendix A]{wang2025multimodal}, we obtain, for any $x,v_1,v_2,v_3 \in \mathbb{R}^d$ 
and $t\in[0,1]$,
\begin{align*}
\big\|
D^3f_\beta(t,x)[v_1,v_2,v_3]
\big\|
& \leq  
\tfrac{\beta L_g}{\rho}  \big(
1+\tfrac{7L_g}{\rho}
+\tfrac{12L_g^2}{\rho^2}+\tfrac{6L_g^3}{\rho^3}\big).
\end{align*}
For the sake of notation, we denote $Z:=x+\sqrt{(1-t)\beta}\xi$ and $ Q(t,x):=Q_{1-t}^\beta g_{\beta}(x)$.
Noting that
\begin{align*}
    \partial_t 
    f_\beta (t,x) 
    = 
    -\tfrac{\beta^2}{2\sqrt{(1-t)\beta}}
    \left(
    \tfrac{\E[\nabla^2 g_\beta(Z)\cdot\xi]}{Q(t,x)}
    -
    \tfrac{\E[\nabla g_\beta(Z)] 
    \cdot
    \E
    \big[
    \langle\nabla g_\beta(Z),\xi\rangle
    \big]}{Q^2(t,x)}
    \right),
\end{align*}
one obtains the second time derivative as
\begin{align*}
    \partial_{tt} f_\beta (t,x)
    &= \tfrac{\beta^{3/2}}{2(1-t)^{3/2}}
    \left(
    \tfrac{\E[\nabla^2 g_\beta(Z)\cdot\xi]}{Q(t,x)}
    -
    \tfrac{\E[\nabla g_\beta(Z)] \cdot
    \E[\langle\nabla g_\beta(Z),\xi\rangle]}{Q^2(t,x)}
    \right)
    \\
    &\quad 
    -\tfrac{\beta^2}{2\sqrt{(1-t)\beta}}
    \bigg(
    \tfrac{\partial_t \E[\nabla^2 g_\beta(Z) \cdot \xi]}{Q(t,x)}
    -
    \tfrac{\E[\nabla^2 g_\beta(Z) \cdot \xi] \cdot \partial_t Q(t,x)}{Q(t,x)}
    \\
    &\qquad\quad
    -\tfrac{\partial_t \E[\nabla g_\beta(Z)]
    \cdot \E[\langle \nabla g_\beta(Z), \xi \rangle] 
    +\E[\nabla g_\beta(Z)] \cdot
    \partial_t \E[\langle \nabla g_\beta(Z), \xi \rangle]}
    {Q^2(t,x)}
    \\
    &\qquad\quad
    +\tfrac{2 \E[ \nabla g_\beta(Z)]
    \cdot
    \E[ \langle \nabla g_\beta(Z), \xi \rangle]
    \cdot
    Q(t,x) \cdot \partial_t Q(t,x)} {Q^4(t,x)}
    \bigg).  
\end{align*}
We have the following bounds
\begin{equation}
    \|\partial_t \E[\nabla^2 g_\beta(Z) \cdot \xi]\|
    =\|-\tfrac{\beta}{2\sqrt{(1-t)\beta}}\E[\nabla^3 g_\beta(Z) \cdot \xi \otimes \xi]]\|
    \leq \tfrac{\beta L_g d}{2\sqrt{(1-t)\beta}}.
\end{equation}
Similarly, we have
\begin{equation}
\begin{aligned}
    \|\E[\nabla^2 g_\beta(Z) \cdot \xi] \cdot \partial_t Q(t,x)\|
    &\leq \tfrac{\beta L_g^2 d}{2\sqrt{(1-t)\beta}},
    \\
    \|\partial_t \E[\nabla g_\beta(Z)]\|
    &\leq \tfrac{\beta L_g \sqrt{d}}{2\sqrt{(1-t)\beta}},
    \\
    \|\partial_t \E[\langle \nabla g_\beta(Z), \xi \rangle]\|
    &\leq \tfrac{\beta L_g d}{2\sqrt{(1-t)\beta}}.
\end{aligned}
\end{equation}
These estimates imply
\begin{equation}
    \|\partial_{tt} f_\beta (t,x)\|
    \leq 
    \tfrac{\beta^2 L_g}{2\rho}
    \big(2+\tfrac{4L_g}{\rho}+\tfrac{2L_g^2}{\rho^2}\big)
    \tfrac{d}
    {\sqrt{(1-t)^3}}.
\end{equation}
In the same manner, we can easily get the estimate \eqref{eq:mixed-diff}.
\end{proof}

\section{Proof of Proposition \ref{prop:mixing-time}}
\label{appendix_mixing-time}
\begin{proof}
Given an error tolerance $\epsilon>0$, Theorem \ref{thm-mc-error-bound} tells that, for $M$ being large enough and $h$ being small enough such that
\begin{align}
\label{eq:mixing-time-two-term}
    C (d h)^{3/2} 
    | \ln h|
    \leq 
    \tfrac{\epsilon}{2}, 
    \quad 
    C \sqrt{\tfrac{d}{M}}
    \leq 
    \tfrac{\epsilon}{2},
\end{align}
one can arrive at
\begin{align*}
        \mathcal{W}_2
        \big(
        \mathcal{L}aw
        (
        \widetilde{Y}_1
        ), 
        \mu
        \big)
        \leq 
        \epsilon.
\end{align*}
Rearranging the first inequality of \eqref{eq:mixing-time-two-term} gives
\begin{align*}
        \tfrac{(1/h)^3}{(\ln(1/h))^2} \geq \tfrac{d^3}{\epsilon^2}.
\end{align*}
Noting that the inequality $\tfrac{x^3}{(\ln x)^2} \geq K$ holds true on the condition $x \geq K^{1/3} (\ln K)^{2/3}$ for $x \geq 1$ , $ K > 0$, the above inequality is satisfied as
\begin{align*}
        N : =
        \tfrac{1}{h} \geq \tfrac{d}{\epsilon^{2/3}}\big(\ln\tfrac{d^{3/2}}{\epsilon}\big)^{2/3}.
\end{align*}
The second inequality of \eqref{eq:mixing-time-two-term} requires
\begin{align*}
    M 
    \geq 
    \tfrac{4C^2d}{\epsilon^2}.
\end{align*}
This completes the proof of the proposition.
\end{proof}
\end{document}